\newtheorem{theorem}{Theorem}[section]
\newtheorem{proposition}[theorem]{Proposition}
\newtheorem{lemma}[theorem]{Lemma}
\newtheorem{corollary}[theorem]{Corollary}
\theoremstyle{definition}
\newtheorem{definition}[theorem]{Definition}
\theoremstyle{remark}
\newtheorem{remark}[theorem]{Remark}
\numberwithin{equation}{section}
\newcommand{\be}{\begin{equation}}
\newcommand{\ee}{\end{equation}}
\newcommand{\bbC}{{\mathbb C}}
\newcommand{\bbZ}{{\mathbb Z}}
\newcommand{\bbR}{{\mathbb R}}
\newcommand{\bbN}{{\mathbb N}}
\newcommand{\calT}{{\mathcal T}}
\newcommand{\calV}{{\mathcal V}}
\newcommand{\calU}{{\mathcal U}}
\newcommand{\calK}{{\mathcal K}}
\newcommand{\calC}{{\mathcal C}}
\newcommand{\calS}{{\mathcal S}}
\newcommand{\x}{\tilde{x}}
\newcommand{\frakp}{{\mathfrak p}}
\newcommand{\norm}[1]{\lVert#1\rVert}
\renewcommand{\Box}{\square}
\newcommand{\wt}{\widetilde}
\newcommand{\h}{\hbar}
\newcommand{\hinv}{\hbar^{-1}}
\begin{document}
	
	\title{Integral representations of isotropic Semi-Classical Functions and applications}
	\author{V. Guillemin}
	\address{Department of Mathematics \\
		Massachusetts Institute of Technology\\Cambridge, MA 02139}
	\email{vwg@math.mit.edu}
	\author{A. Uribe}
	\address{Department of Mathematics \\
		University of Michigan\\Ann Arbor, Michigan 48109}
	\email{uribe@umich.edu}
	\author{Z. Wang}
	\address{School of Mathematical Sciences\\
		University of Science and Technology of China \\ Hefei, Anhui 230026, P.R.China}
	\email{wangzuoq@ustc.edu.cn}
    \thanks{Z.W. is supported in part by National Key Research and Development Project SQ2020YFA070080 and NSFC 12026409, NSFC 11721101.}

	\date{\today}
	\begin{abstract}
In \cite{GUW} we introduced a class of ``semi-classical
functions of isotropic type", starting with a model case and applying Fourier integral
operators associated with canonical transformations.
These functions are a substantial generalization	of the ``oscillatory functions of
Lagrangian type" that have played major role in semi-classical and
micro-local analysis.
In this paper we exhibit more clearly the nature of these isotropic functions by obtaining oscillatory integral
expressions for them.  Then we use these to prove that the classes of isotropic functions are equivariant with respect
to the action of general FIOs (under the usual clean-intersection hypothesis).
The simplest examples of isotropic states are the ``coherent states", a class of oscillatory functions
that has played a pivotal role in mathematics and theoretical physics
beginning with their introduction by of Schr\"odinger in the 1920's.  We prove that  every
oscillatory function of isotropic type can be expressed as a
superposition of coherent states, and examine some implications of that fact.
We also show that certain functions of elliptic operators have isotropic functions for
Schwartz kernels.  This lead us to a result on an eigenvalue counting function that
appears to be new (Corollary \ref{cor:altWeyl}).
	\end{abstract}
	
	\maketitle

\centerline{\em In memory of Mikhail Shubin.}
	
	\tableofcontents

\section{Introduction}

The world of microlocal analysis is populated by objects of ``Lagrangian type". For instance, pseudodifferential operators on a manifold $X$ live microlocally on the diagonal in $T^*X \times (T^*X)^-$, and Fourier integral operators on  a canonical  relation
\[
\Gamma: T^*X \Rightarrow T^*Y 
\]
which is a Lagrangian submanifold of $T^*X \times (T^*Y)^{-}$.
The topic of this paper however will be objects in semi-classical analysis that live instead on \emph{isotropic}
submanifolds, which can be defined as any submanifold of a Lagrangian submanifold.
Microlocal objects of this type are not entirely unfamiliar, and in complex analysis have been around since the 1970's.
A prototypical example is the Szeg\"o projector,
\[
\Pi: L^2(X) \to H(X),
\]
where $X$ is the boundary of a smooth compact pseudoconvex domain, $\Omega$, in $\mathbb C^n$, and $H(X)$ is the $L^2$-closure of $\mathcal O(\Omega)|_X$.
More generally, from $\Pi$ one gets a large class of operators,
\begin{equation}\label{Toeplitz}
T = \Pi P \Pi: H(X) \to H(X),
\end{equation}
where $P$ is a pseudodifferential operator on $X$.
Operators of the form \eqref{Toeplitz}, known as the (generalized) Toeplitz operators, were first defined and studied by Boutet de Monvel and the first named author of this paper in \cite{BG},
and have played an important role in the theory of geometric quantization as well as in other areas.   Their Schwartz kernels are isotropic distributions, associated to
a particular submanifold of the diagonal (which we won't describe here).

In this paper we will consider objects of this type in the real $C^\infty$ world rather than in the complex world,  and within the framework of semi-classical rather than microlocal analysis.  Furthermore, since we are working in the semi-classical world, we no longer need to assume that these isotropic objects be conic, as in the microlocal setting.

The prototypical example of such an isotropic function is
\begin{equation}\label{Rnprototypical}
\Upsilon(x, \hbar) := \hbar^r e^{if(x)/\hbar} \varphi(x', \hbar^{-1/2}x'', \hbar),
\end{equation}
where $\hbar$ is Planck's constant (or the semi-classical parameter) and
\begin{itemize}
\item $x=(x', x'')$ are coordinates on $\mathbb R^n=\mathbb R^k \times \mathbb R^l$,
\item $f \in C^\infty(\mathbb R^n)$ is a  real-valued smooth function,
\item $\varphi \in C^\infty(\mathbb R^n \times \mathbb  (0,\h_0), \bbC)$ is a smooth function which
is Schwartz in the variable $x''$ with estimates locally uniform in $x'$ and
admits an asymptotic expansion
\begin{equation}\label{2exp}
	\varphi(x',x'',\h) \sim \h^{r}\sum_{j=0}^\infty \varphi_j(x',x'')\, \h^{j/2},
\end{equation}
and where, for all $j$, $\varphi_j(x',x'')$ is
a Schwartz function in the $x''$ variable satisfying Schwartz estimates that are locally uniform in $x'$.
\end{itemize}

As $\hbar$ tends to zero, the function $\Upsilon$ becomes more and more concentrated on the manifold
\[
Z=\mathbb R^k \times \{0\}
\]
in configuration space and more and more concentrated on the manifold
\[
\Sigma = \{(x, df_x)\ |\ x \in Z\}
\]
in phase space. Note that $\Sigma$ is an isotropic submanifold inside the Lagrangian submanifold
\begin{equation}\label{lambdaf}
	\Lambda_f=\{(x, df_x)\ |\ x \in \mathbb R^n\},
\end{equation}
which has $f$ as its generating function.

More generally, suppose that $X$ is an $n$-dimensional manifold
and $\Sigma\subset T^*X$ an isotropic submanifold of its
cotangent space.  In \cite{GUW} we defined classes of $\h$-dependent smooth functions on $X$,
$I^r(X,\Sigma)$ ($r$ being a half integer), whose semi-classical wave front set is contained in $\Sigma$.
%
%
Our general definition of $I^r(X,\Sigma)$ is as follows.
We take, as a model case,
the prototypical example above where $f= 0$.  The resulting functions have wave-front set in
the canonical isotropic $\Sigma_0$ given by
\begin{equation}\label{Sigma0}
\Sigma_0 = \{(x, \xi): x''=0, \xi'=0, \xi''=0\} \subset T^*\mathbb R^n,
\end{equation}
and we define  a model class $I^r(\bbR^{n},\Sigma_0)$ whose elements $\Upsilon\in I^r(\bbR^{n},\Sigma_0)$ are of the form
\begin{equation}\label{basicdef}
\Upsilon(x,\h) = \varphi(x', \hbar^{-1/2}x'', \hbar)
\end{equation}
where $\varphi(x',x'',\h)$, as $\h\to 0$, admits an asymptotic expansion
\begin{equation}\label{2exp}
\varphi(x',x'',\h) \sim \h^{r}\sum_{j=0}^\infty \varphi_j(x',x'')\, \h^{j/2},
\end{equation}
and where, for all $j$, $\varphi_j(x',x'')$ is
a Schwartz function in the $x''$ variable satisfying Schwartz estimates that are locally uniform in $x'$.
After proving that the space $I^r(\bbR^{n},\Sigma_0)$ is invariant under FIOs associated with canonical transformations
preserving $\Sigma_0$ (as a set), we were able to define the general classes $I^r(X,\Sigma)$ microlocally as the
images of these functions under FIOs associated to local canonical transformations mapping
$\Sigma_0$ to $\Sigma$.

One of the  main results of the present paper is to prove that these classes are invariant under the action of arbitrary Fourier integral operators, provided the usual clean intersection condition is satisfied:
\begin{theorem}
\label{FIOinvThm}
Let $\Sigma\subset T^*Y$ be an isotropic, and $\Upsilon \in I^r(Y, \Sigma)$ an associated isotropic
function.  Let $F: C_0^\infty(Y)\to C^\infty (X)$ be a semiclassical FIO of order zero
whose canonical relation, $\Gamma \subset T^*X \times (T^*Y)^-$,
intersects $\Sigma$ cleanly.  Then
\[
F(\Upsilon)\in I^{r+(\dim Y -\dim X-e)/2} (X, \Gamma(\Sigma)),
\]
 i.e. $F(\Upsilon)$  is an isotropic function associated with the image of $\Sigma$ under $\Gamma$. Here $e$ is the excess of the clean composition (see \S 3).
\end{theorem}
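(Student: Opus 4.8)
The plan is to reduce the general FIO statement to the model case already established in \cite{GUW}, namely that $I^r(\bbR^n,\Sigma_0)$ is preserved by FIOs associated to canonical transformations fixing $\Sigma_0$, together with the microlocal definition of $I^r(X,\Sigma)$ as the pushforward of model functions under elliptic FIOs quantizing local canonical transformations $\Sigma_0 \to \Sigma$. Since the statement is microlocal and both sides of the asserted membership are defined by localization, I would first cut everything down with a partition of unity: choose a locally finite cover of $\Sigma$ and of $\Gamma(\Sigma)$ by coordinate patches, insert semiclassical cutoffs $\chi_i$ microlocally supported near points of $\Sigma$, and write $\Upsilon = \sum_i \chi_i \Upsilon$ modulo $O(\h^\infty)$. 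On each patch $\chi_i\Upsilon = G_i \Upsilon_i^{(0)}$ where $\Upsilon_i^{(0)}\in I^r(\bbR^n,\Sigma_0)$ and $G_i$ is an elliptic FIO for a canonical transformation $g_i:\Sigma_0\to\Sigma$. Then $F\chi_i\Upsilon = (FG_i)\Upsilon_i^{(0)}$, and $FG_i$ is again an FIO (composition of FIOs) whose canonical relation is $\Gamma\circ\mathrm{graph}(g_i)$; the clean intersection hypothesis on $\Gamma\cap\Sigma$ translates into a clean composition here, with the same excess $e$.

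Thus the crux is the following model computation: if $\Upsilon^{(0)}\in I^r(\bbR^n,\Sigma_0)$ and $H$ is an FIO whose canonical relation $\Gamma'$ meets $\Sigma_0$ cleanly with excess $e$, then $H\Upsilon^{(0)}\in I^{r+(\dim Y-\dim X-e)/2}(X,\Gamma'(\Sigma_0))$. The natural approach is to use the oscillatory integral representation of isotropic functions that the paper has just developed (the whole point of the first part of the paper): write $\Upsilon^{(0)}$ explicitly as $\h^{-\ell/4}\int e^{i\langle x'',\theta\rangle/\sqrt\h}\, a(x',x'',\theta,\h)\,d\theta$-type integral (up to normalization), combine it with the oscillatory-integral kernel of $H$, and apply stationary phase in the fibre variables. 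The clean intersection hypothesis guarantees that the total phase in the oscillatory integral is clean, so stationary phase applies with a degenerate critical manifold of dimension $e$; carrying out the $e$-dimensional nondegenerate transverse stationary phase produces the claimed shift of order by $-e/2$, while the usual dimensional bookkeeping for FIOs (half-densities, the $\dim Y - \dim X$ mismatch) produces the remaining $(\dim Y - \dim X)/2$. One must check that the resulting amplitude is again Schwartz in the directions transverse to $\Gamma'(\Sigma_0)$ with the locally uniform estimates in the tangential directions, which follows because integration against a Gaussian-type/Schwartz amplitude preserves the Schwartz class and stationary phase in the transverse variables does not destroy decay.

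The main obstacle is precisely this model stationary-phase argument in the presence of excess: one must set up coordinates adapted simultaneously to $\Sigma_0$, to the fibres of $\Gamma'$, and to the clean intersection, verify that the Hessian of the combined phase is nondegenerate on the normal bundle to the critical manifold, and track how the isotropic scaling $x''\mapsto \h^{-1/2}x''$ interacts with the semiclassical scaling in $H$'s phase. Concretely I expect to split the fibre integration variables into a group handled by ordinary (non-degenerate) stationary phase, a group that survives as parameters along $\Gamma'(\Sigma_0)$, and a group in the ``$\h^{-1/2}$-scaled'' directions that get absorbed into the new Schwartz profile $\varphi$; checking that these three groups decouple correctly, and that the excess contributes exactly $e$ to the first group's complementary dimension, is the delicate point. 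Once the model case is in hand, reassembling via the partition of unity and invoking the well-definedness of $I^{r'}(X,\Gamma(\Sigma))$ (independence of the choices of cutoffs and quantizing FIOs, also established earlier) finishes the proof, with the order $r + (\dim Y - \dim X - e)/2$ coming out uniformly on every patch.
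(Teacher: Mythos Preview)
Your reduction to the model isotropic $\Sigma_0$ via FIOs quantizing canonical transformations is the same first step the paper takes.  After that, however, the paper follows a genuinely different route from the degenerate stationary-phase computation you outline.  Rather than attempting stationary phase in the combined fibre variables (which, as you correctly flag, is delicate because the amplitude carries a $\h^{-1/2}$-scaled Schwartz factor that does not fit the standard clean-phase lemma), the paper proceeds as follows.  It first proves a short linear-algebra lemma (Lemma~\ref{compatibleGenFctn}) showing that one may always choose a generating function $f(x,y',y'',s)$ for $\Gamma$ that is \emph{adapted} in the sense that the differentials $dy''_j$ and $d(\partial f/\partial y''_j)$ are linearly independent.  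With such a phase, the full set of fibre variables $(y',s)$ can be relabelled as $(t',t'')$ so that the $t'$-differentials together with those $2l$ differentials form a maximal independent system; the dimension count then gives $\dim t'' = e$ (the excess).  The integral $F(\Upsilon)$ is then written as an iterated integral: for each fixed $t''$, the inner $(t',y'')$-integral is \emph{already} of the exact form appearing in Theorem~\ref{IntegralRep}, with $\tilde f_{t''}$ a non-degenerate phase parametrizing the same germ $\Gamma(\Sigma)$ for every $t''$.  So the inner integral lies in $I^{(m+l)/2}(X,\Gamma(\Sigma))$ by Theorem~\ref{IntegralRep} directly---no new stationary-phase argument is needed---and the remaining $t''$-integration is a smooth superposition of isotropic functions in the same class.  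The order formula then drops out of the bookkeeping $r + (m+l)/2 - (\dim X + N)/2 = r + (\dim Y - \dim X - e)/2$.  In short, the paper trades your proposed mixed-scaling stationary phase for a purely geometric splitting of the fibre variables plus an appeal to Theorem~\ref{IntegralRep} as a black box; this is exactly what buys it the ability to avoid the obstacle you identified as the delicate point.
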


It turns out that in proving this FIO invariance result, the definition of $I^r(X, \Sigma)$ we alluded to above is not convenient to use.
For this reason, and because of its general interest to this theory, we will
 give another equivalent characterization for  elements in $I^r(X, \Sigma)$ in terms of local integral representations,
 generalizing the Lagrangian case as developed by H\"ormander.

Before describing this H\"ormander approach we will first of all assume that $\Sigma$ is a \emph{horizontal} isotropic submanifold of $T^*X$, i.e. that there exists a submanifold $X_\Sigma$ of $X$ and a function $f \in C^\infty(X)$ such that
\[
(x, \xi) \in \Sigma \Longleftrightarrow x \in X_\Sigma \ \mbox{and}\  \xi=df_x.
\]
Then in this case we will define $I^r(X, \Sigma)$ to be the set of functions $\Upsilon(x, \hbar) \in C^\infty(X \times \mathbb R)$
which, at points $x \in X\setminus X_\Sigma$, vanish to order $O(\hbar^\infty)$ on a neighborhood of $x$ and for $x \in X_\Sigma$ are of the form \eqref{Rnprototypical} on a coordinate patch $\mathbb R^n=\mathbb R^k \times \mathbb R^l$ with the properties above.

Coming back to the H\"ormander's approach let $\Lambda \subset T^*X$ be an arbitrary Lagrangian submanifold. Then, at
least locally, one can define $\Lambda$ as follows: One can find a fiber bundle
\[Z \stackrel{\pi}{\longrightarrow} X\]
and a function $f \in C^\infty(Z)$ such that
\begin{enumerate}
\item[(I)] the manifold $\Lambda_f$ (see (\ref{lambdaf})) intersects the horizontal subbundle
\[H^*Z :=\pi^* T^*X\]
transversally,
\item[(II)]  the canonical fiber bundle  map
\begin{equation}\label{HZtoTZ} H^*Z \to T^*X\end{equation}
maps the intersection $\Lambda_f \cap H^*Z$ diffeomorphically onto $\Lambda$.
\end{enumerate}
In fact we will take $Z= X\times\bbR^N$ to be the trivial bundle, in which case
	\[
	H^*Z = \left\{	(x,s,\zeta,\sigma)\;|\; \sigma=0 \right\}
	\]
	so that $\Lambda_f \cap H^*Z = C_f = \left\{ (x,s, (d_xf)_{(x,s)}, 0)\;;\;( d_sf)_{(x,s)} = 0	\right\}$.
Following H\"ormander we will call such an $f$ a \emph{generating function} for
$\Lambda \subset T^*X$ with respect to the fibration $\pi$.  The procedure of
passing from $\Lambda_f$ to $\Lambda$ is the {\em reduction} of $\Lambda_f$ with respect
to the co-isotropic submanifold $H^*Z$ of $T^*Z$.

Our adaption of this approach to the isotropic setting is as follows:
We will show (Proposition \ref{isotropicHormander}) that
if $\Sigma \subset T^*X$ is an isotropic submanifold, one can
(at least locally) find a Lagrangian submanifold $\Lambda \supset \Sigma$ in $T^*X$, a fibration $\pi: Z \to X$ and a function $f \in C^\infty(Z)$ with the properties (I) and (II) above, and a submanifold $Z_\Sigma \subset Z$ such that
\begin{enumerate}
\item[(III)] the manifold
\[\Sigma_f = \{(z, df_z)\ |\ z \in Z_\Sigma\}\] intersects the horizontal subbundle $H^*Z$ transversally, and
\item[(IV)]  the  bundle  map $H^*Z \to T^*X$
maps the intersection $\Sigma_f \cap H^*Z$ diffeomorphically onto $\Sigma$.
\end{enumerate}
Such a function $f$ will be called a non-degenerate phase function parametrizing the pair $(\Sigma, \Lambda)$.

Our generalization to the isotropic case (and in the semi-classical setting) of H\"ormander's characterization of $I^k(X, \Lambda)$ via generating functions is the following: We first equip $Z$ and $X$ with non-vanishing smooth measures, $\mu_Z$ and $\mu_X$, and define a push forward operation
\[
\pi_*: C_0^\infty(Z) \to C^\infty(X)
\]
with the defining property
\begin{equation}
\pi_*(g) \mu_X = \pi_* (g \mu_Z)
\end{equation}
for all $g \in C_0^\infty(Z)$, where $\pi_*$ on the right-hand side is the fiber integral operation. Then we claim
\begin{theorem}
\label{AltDefIsoSTh}
A function lies in $I^r(X, \Sigma)$ if and only if locally it is of the form
	\begin{equation}\label{AltDefIsoS}
	\pi_*(\Upsilon_0), \quad \Upsilon_0 \in I^{r-\frac N2}(Z, \Sigma_f),
	\end{equation}
where $f$ is a non-degenerate phase function parametrizing $(\Sigma, \Lambda)$ and $N=\dim Z - \dim X$ is the fiber dimension.
\end{theorem}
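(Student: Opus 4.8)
The plan is to prove both implications by localizing, reducing to a convenient normal form via the equivariance already at our disposal, and then running a semiclassical stationary phase adapted to the anisotropic (Schwartz in $\hbar^{-1/2}(\cdot)$) structure of these functions. Both sides of the asserted equivalence are local in $x$ and stable under finite sums, so it suffices to work near a point of $Z_\Sigma\cap C_f$ and its image in $\Sigma$. Since, by \cite{GUW}, $I^r(\cdot,\cdot)$ is equivariant under elliptic semiclassical FIOs quantizing canonical transformations, and since any such $F$ on $X$ lifts through the product $Z=X\times\bbR^N$ to $F\otimes\mathrm{Id}$ on $Z$ --- which intertwines $\pi_*$ with $F$, preserves $H^*Z$, and carries $\Sigma_f$ to $\Sigma_{f'}$ with $f'$ parametrizing $(F(\Sigma),F(\Lambda))$ --- I would first put $\Sigma$ into the model position $\Sigma_0$ of \eqref{Sigma0}. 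Then, using H\"ormander's equivalence theorem for non-degenerate phase functions in its fiber-preserving, stabilized form (changes of the fiber variables $s$ depending smoothly on $x$, together with addition or deletion of non-degenerate quadratic forms in extra fiber variables, which shift $N$ and $r$ compatibly), I would normalize $f$ relative to $\Lambda\supseteq\Sigma_0$. This same step establishes, as the statement requires, that the class $\{\pi_*(\Upsilon_0):\Upsilon_0\in I^{r-N/2}(Z,\Sigma_f)\}$ is independent of the choice of $f$, $\Lambda$ and $\pi$.

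For the implication ``$\Leftarrow$'' I would argue separately away from and near $X_\Sigma$. If $x\notin X_\Sigma$, then by property (IV) the fiber momentum $d_sf$ cannot vanish on $Z_\Sigma\cap\pi^{-1}(x)$; splitting the $s$-integral into a neighborhood of that fiber --- where repeated integration by parts in $s$ gains a power of $\hbar^{1/2}$ at each step (each $\partial_s$ produces an $\hbar^{-1/2}$ from the rescaled argument of the amplitude but is outweighed by the $\hbar$ from the oscillation) --- and its complement --- where the amplitude is already $O(\hbar^\infty)$ by Schwartz decay --- gives $\pi_*(\Upsilon_0)=O(\hbar^\infty)$ there. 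Near a point of $X_\Sigma$ I would insert the model form of $\Upsilon_0$ (including the smooth positive density factor coming from $\mu_Z,\mu_X$) and evaluate the fiber integral by stationary phase about the critical fiber. The essential point is that $\Upsilon_0$ concentrates on $\Sigma_f$, not on all of $\Lambda_f$, so its amplitude is Schwartz in the $n-k$ directions transverse to $Z_\Sigma$; consequently each of the $N$ fiber directions contributes a factor $\hbar^{1/2}$ --- whether through a non-degenerate Gaussian integration where $d^2_sf$ is non-singular, or through the rescaling $s\mapsto\hbar^{1/2}s$ of a Schwartz profile in the directions where $d^2_sf$ degenerates (i.e.\ where $\Lambda$ fails to be horizontal along $\Sigma$) --- which produces exactly the order shift $r-N/2\mapsto r$. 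Hypotheses (III)--(IV) are what force the outcome to be of the model anisotropic type \eqref{Rnprototypical} for $\Sigma$, with the transverse base variables entering at scale $\hbar^{1/2}$ and with the stationary value of $f$ along the critical fiber restricting on $X_\Sigma$ to a generating function of $\Sigma$; when $\Lambda$ is non-horizontal one recognizes the outcome as an element of $I^r(X,\Sigma)$ by first conjugating (via \cite{GUW}) to a chart in which $\Lambda$ is horizontal and the stationary phase is standard.

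For the implication ``$\Rightarrow$'' I would proceed by successive approximation at the symbol level. The computation above yields an explicit formula for the leading symbol of $\pi_*(\Upsilon_0)$ in terms of that of $\Upsilon_0$ --- a fiberwise integral transform of Gaussian type determined by $d^2_sf$, times Maslov and half-density factors --- and one checks this map is onto the symbols of $I^r(X,\Sigma)$. Given $\Upsilon\in I^r(X,\Sigma)$, one therefore solves for $\Upsilon_0^{(0)}\in I^{r-N/2}(Z,\Sigma_f)$ with $\Upsilon-\pi_*(\Upsilon_0^{(0)})\in I^{r+1/2}(X,\Sigma)$, iterates, and assembles a genuine $\Upsilon_0$ by Borel summation; the already-proved direction ``$\Leftarrow$'' guarantees the remainders drop in order at each stage, so the series is asymptotic and the residual $O(\hbar^\infty)$ error is harmless.

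The crux, I expect, is the stationary-phase step in ``$\Leftarrow$'': proving the required version of stationary phase for oscillatory integrals whose amplitude depends on part of the variables through the rescaling $\hbar^{-1/2}(\cdot)$ and is Schwartz there, with an asymptotic expansion valid \emph{uniformly} as the transverse base variables range over a neighborhood of size $\hbar^{1/2}$, and with remainders that are Schwartz with the locally uniform estimates demanded by the definition of $I^r$. Reconciling the two scales --- oscillation at scale $\hbar$ and concentration at scale $\hbar^{1/2}$ --- inside a single fiber integral, and then matching the result against \eqref{Rnprototypical}, is where the real work lies; the order bookkeeping and the appeals to the equivariance results of \cite{GUW} are comparatively routine.
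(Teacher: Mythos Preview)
Your outline is sound and would lead to a proof, but it diverges from the paper in both directions, and in one of them the paper's route is substantially simpler.

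For ``$\Leftarrow$'' the strategies are close: both reduce via FIO equivariance to the model pair $(\Sigma_0,\Lambda_0)$ and then analyze the fiber integral. The paper, however, does not invoke an abstract anisotropic stationary-phase lemma. Instead (Lemma~\ref{specialUpsilon}) it works directly with an \emph{arbitrary} non-degenerate phase $f$ parametrizing $(\Sigma_0,\Lambda_0)$: using that $\pi_1:C_f\to\calV$ is a local diffeomorphism and that $C_f\pitchfork\{u=0\}$, it changes fiber coordinates so that $C_f\subset\{x''=u\}$, Taylor-expands the phase about $C_f$ and about $\{t=0,u=0\}$, and then simply rescales $t\mapsto\sqrt{\h}\,t$, $u\mapsto\sqrt{\h}\,u$ to exhibit the result in model form. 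This sidesteps the uniformity issues you flag as ``the crux''. The reduction of the general case to the model case is then not via H\"ormander's equivalence theorem for phases but via Proposition~\ref{nondegphaseforcomposition}: one applies an FIO $F$ quantizing a canonical transformation to $(\Sigma_0,\Lambda_0)$, observes that $F(\Upsilon)$ is again of the form \eqref{localExpression} with the composed phase $f+g$, and that this new phase parametrizes $(\Sigma_0,\Lambda_0)$, so Lemma~\ref{specialUpsilon} applies.

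For ``$\Rightarrow$'' the paper avoids symbol iteration and Borel summation entirely. The point is that $I^r(X,\Sigma)$ is \emph{defined} as the set of FIO-images of model functions $\varphi(y',\h^{-1/2}y'',\h)$. So one writes $\Upsilon=F(\varphi)$ with $F$ given by a kernel $\h^{-(n+N)/2}\int e^{i\hinv g(x,y,s)}b(x,y,s,\h)\,ds$, and observes that $F(\varphi)$ is already literally an integral of the form \eqref{localExpression}, with phase $g(x,y',y'',s)$ now regarded as having fiber variables $(y',y'',s)$ and amplitude $b\cdot\varphi$. Two short checks finish the proof: a lemma showing that $g$, with this enlarged set of fiber variables, is a non-degenerate phase parametrizing $(\Phi(\Sigma_0),\Phi(\Lambda_0))$; and a Taylor expansion of $b$ in $y''$ at $y''=0$ showing that $b(x,y',y'',s,\h)\varphi(y',y''/\sqrt{\h})$ has the required amplitude form. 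Your iterative scheme would work, but the surjectivity of the symbol map that you ``check'' is exactly what the paper gets for free from the definition.
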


\begin{remark}
\mbox{\ }
\begin{enumerate}
\item As we will see this theorem translates in the integral representation (\ref{localExpression}).
\item In particular, if $\Sigma$
is a Lagrangian submanifold of $T^*X$, then these objects are semi-classical analogues of H\"ormander's {distribution of Lagrangian type},
and were discussed at length in \cite{GS}.
\end{enumerate}
\end{remark}

For $x$ a point in $X$ and $\Sigma=\{(x, \xi)\}$ a one-point set in $T^*X$ , the
associated isotropic functions have been around in the physics literature since the 1920's and are known
 as \emph{coherent states}.  There is a vast literature on coherent states both in mathematics and physics journals;
we mention work by R. Littlejohn, \cite{L}, G. Hagedorn, \cite{Ha}, and particularly
the monograph by M. Combescure and D. Robert \cite{CR} and references therein.

On coordinate patches centered at the point and if $\xi=df_0$, they have the simple form
      \[
      e^{if(x)/\hbar} \varphi(\hbar^{-1/2}x, \hbar),
      \]
      where $\varphi$ is a Schwartz function in $x$ that satisfies estimates of the form (\ref{2exp}), with $x''=x$.
      Thus as $\hbar$ tends to zero they become more and more concentrated at the point $x=0$ in configuration space and at the point $(0, \xi)$ in phase space.
In the last part of this paper we will  discuss isotropic functions
from a more intuitive perspective as ``superpositions
of coherent states".  In section 5 we will show
that every semi-classical oscillatory function of isotropic type can be
defined as a superposition of coherent states, and we discuss some consequences of this fact.

\medskip
For H\"ormander the main purpose of introducing  distributions of Lagrangian type is that it gave him a very clean simple
way of defining Fourier integral operators. Namely let $\Gamma: T^*Y \Rightarrow T^*X$ be a canonical relation,
and let $\Gamma^\# \subset T^*(X \times Y)$ be the associated Lagrangian submanifold, i.e.
\[
((x, \xi), (y, \eta)) \in \Gamma \Longleftrightarrow (x, y, \xi, -\eta) \in \Gamma^\#.
\]
Then the associated semi-classical Fourier integral operators $F: C_0^\infty(Y)\to C^\infty (X)$
are operators whose Schwartz kernels are elements in the space
$I^k(X \times Y, \Gamma^\#)$
for some $k$. This definition works equally well for \emph{isotropic} canonical relations, i.e.
relations $\Gamma: T^*Y \Rightarrow T^*X$ for which $\Gamma^\#$ is an isotropic submanifold.
Consequently, we will call the isotropic analogues of the operators above \emph{Fourier integral operators of isotropic type}. As a consequence of Theorem \ref{FIOinvThm}, one can easily show that if $F_1$ and $F_2$ are Fourier integral operators of isotropic type with microsupports on canonical relations $\Gamma_1$ and $\Gamma_2$ and these relations are cleanly composable, then $F_2 \circ F_1$ is a Fourier integral operator of isotropic type with microsupport on $\Gamma_2 \circ \Gamma_1$.
In \S 4 we prove that certain functions of elliptic operators are of this kind, which leads to
a variation of the Weyl eigenvalue counting function that appears to be new (Corollary \ref{cor:altWeyl}).
We would like to think that Mikhail Shubin would have enjoyed this result, if he did not know it already.

We hope to do a more systematic study of isotropic FIOs in the future.

%
%
%
%

\section{Local integral representations}

\subsection{Non-degenerate phase functions}

The purpose of this section is to prove  Theorem \ref{AltDefIsoSTh}, which gives an alternative definition of $I^r(X, \Sigma)$.
First we introduce some terminology:

\begin{definition}
	Let $X$ be a smooth manifold of dimension $n$ and $\Sigma \subset T^*X$ an isotropic submanifold. A \emph{framing} of $\Sigma$ is a Lagrangian submanifold $\Lambda \subset T^*X$ such that $\Sigma \subset \Lambda$.  The pair $(\Sigma, \Lambda)$ is called a {\em framed isotropic submanifold} of $T^*X$.
\end{definition}

The existence of framings for a given $\Sigma$ is not hard to establish;
the proof is sketched in Lemma 1  in  \cite{Gui}.
In what follows,
strictly speaking we will work with {germs} of framed isotropics, by which
we mean that we will not distinguish between $(\Sigma, \Lambda)$ and
$(\Sigma, \Lambda')$ if $\Lambda\cap\Lambda'$ contains a relative open set in each.
We will not use the language of germs explicitly, but germ-equivalence will be tacitly
assumed.

We have explained H\"ormander's extension of generating functions for arbitrary Lagrangian submanifolds in the introduction. The isotropic version of H\"ormander's theorem is as follows.

\begin{proposition}\label{isotropicHormander}
	Let $(\Sigma, \Lambda)$  be a framed isotropic submanifold of $T^*X$.
	Then there exists, at least for a neighborhood of each point in $\Sigma$,
	a fiber bundle $\pi: Z \to X$, a horizontal Lagrangian submanifold, $\Lambda_f \subset T^*Z$, with generating function $f \in C^\infty(Z)$, and an isotropic submanifold, $\Sigma_f$, of $\Lambda_f$ such that $\Lambda_f$ and $\Sigma_f$ intersect $H^*Z$ transversally and the projection map \eqref{HZtoTZ} maps $\Lambda_f \cap H^*Z$ onto $\Lambda$ and $\Sigma_f \cap H^*Z$ onto $\Sigma$.
	\end{proposition}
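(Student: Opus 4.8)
The plan is to \emph{reduce} the isotropic statement to the classical Lagrangian one of H\"ormander (recalled in the Introduction) together with an elementary transversal--extension argument, so that no new symplectic input is needed. Fix a point $\sigma_0 \in \Sigma$. Applying H\"ormander's theorem to the framing $\Lambda$, after shrinking to a neighborhood of $\sigma_0$ we obtain a trivial fiber bundle $\pi : Z = X \times \bbR^N \to X$ and a generating function $f \in C^\infty(Z)$ such that $\Lambda_f$ meets $H^*Z$ transversally and the reduction map \eqref{HZtoTZ} restricts to a diffeomorphism
\[
\rho :\ C_f := \Lambda_f \cap H^*Z \ \xrightarrow{\ \sim\ }\ \Lambda ,
\]
where $C_f = \{(x,s,(d_xf)_{(x,s)},0)\ :\ (d_sf)_{(x,s)} = 0\}$. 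Non-degeneracy of $f$ means that $C_f$, identified with $\{z \in Z : d_sf(z) = 0\}$, is an embedded submanifold of $Z$ of dimension $n = \dim X$. Since $\Sigma \subset \Lambda$, its preimage $W := \rho^{-1}(\Sigma)$ is a submanifold of $C_f$, hence of $Z$, with $\dim W = \dim \Sigma =: d$, and $\rho$ restricts to a diffeomorphism of $W$ onto $\Sigma$.

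The next step is to extend $W$ to a submanifold of $Z$ meeting $C_f$ transversally exactly along $W$. Since $W$ is a submanifold of the submanifold $C_f \subset Z$, we may pick local coordinates $(z_1,\dots,z_{n+N})$ on $Z$ near the point over $\sigma_0$ in which $C_f = \{z_{n+1} = \cdots = z_{n+N} = 0\}$ and $W = \{z_{d+1} = \cdots = z_{n+N} = 0\}$; then $Z_\Sigma := \{z_{d+1} = \cdots = z_n = 0\}$ is a submanifold of $Z$ of dimension $d+N$ with $Z_\Sigma \cap C_f = W$ and $T_pZ_\Sigma + T_pC_f = T_pZ$ at every $p \in W$. (Invariantly: choose any complement of $TC_f|_W$ in $TZ|_W$ and exponentiate.) Now set
\[
\Sigma_f := \{(z,df_z)\ :\ z \in Z_\Sigma\}\ =\ \Lambda_f \cap \pi_Z^{-1}(Z_\Sigma)\ \subset\ \Lambda_f ,
\]
where $\pi_Z : T^*Z \to Z$ is the base projection. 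Being a subset of the Lagrangian $\Lambda_f$, it is automatically isotropic, and being the image of $Z_\Sigma$ under the section $z \mapsto (z,df_z)$ it is an embedded submanifold.

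It remains to verify the two transversality/image assertions, and the only slightly delicate point — the expected main obstacle, although it is really just bookkeeping — is translating ``transverse to $H^*Z$ in $T^*Z$'' into ``transverse to $C_f$''. Since $\Lambda_f$ already meets $H^*Z$ transversally and $T_p\Sigma_f \subset T_p\Lambda_f$, the modular law gives, at any $p \in \Sigma_f \cap H^*Z$,
\[
T_p\Sigma_f + T_pH^*Z = T_pT^*Z \iff T_p\Sigma_f + (T_p\Lambda_f \cap T_pH^*Z) = T_p\Lambda_f ,
\]
so that $\Sigma_f \pitchfork H^*Z$ in $T^*Z$ if and only if $\Sigma_f \pitchfork C_f$ inside $\Lambda_f$. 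Transporting along the diffeomorphism $\Lambda_f \cong Z$ given by the graph of $df$ — under which $\Sigma_f$ corresponds to $Z_\Sigma$ and $C_f \subset \Lambda_f$ corresponds to $C_f \subset Z$ — this becomes $Z_\Sigma \pitchfork C_f$ in $Z$, which holds by construction.

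Finally, $\Sigma_f \cap H^*Z \subseteq \Lambda_f \cap H^*Z = C_f$ corresponds under $\Lambda_f \cong Z$ to $Z_\Sigma \cap C_f = W$, and $\rho$ carries $W$ diffeomorphically onto $\Sigma$; since $\rho$ is the restriction of \eqref{HZtoTZ}, this yields the asserted facts that \eqref{HZtoTZ} maps $\Lambda_f \cap H^*Z$ onto $\Lambda$ and $\Sigma_f \cap H^*Z$ onto $\Sigma$. Shrinking all neighborhoods to a common one around $\sigma_0$ then gives the desired data there; as $\sigma_0 \in \Sigma$ was arbitrary, such a fibration, generating function, and pair $(\Lambda_f, \Sigma_f)$ exist near every point of $\Sigma$, which proves the proposition.
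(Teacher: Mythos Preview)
Your proof is correct, but it follows a genuinely different route from the paper's. The paper proves this proposition via its restatement (Proposition~\ref{paramPhases}) by first observing that any framed isotropic $(\Sigma,\Lambda)$ is locally the image under a canonical transformation of the model pair $(\Sigma_0,\Lambda_0)$ (zero section and $\{x''=0\}$), then writing down the explicit phase $f(x',x'',t,u)=t\cdot(x''-u)$ for the model, and finally invoking Proposition~\ref{nondegphaseforcomposition} to push this phase forward through the canonical transformation. Your argument instead takes any H\"ormander phase for the framing $\Lambda$, pulls $\Sigma$ back along the diffeomorphism $C_f\cong\Lambda$ to a submanifold $W\subset C_f$, thickens $W$ to a submanifold $Z_\Sigma\subset Z$ transverse to $C_f$, and then uses the modular law to convert ``$Z_\Sigma\pitchfork C_f$ in $Z$'' into ``$\Sigma_f\pitchfork H^*Z$ in $T^*Z$''.

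Your approach is more elementary and self-contained: it needs no reduction to a model, no explicit phase, and no composition-of-phases lemma. The paper's approach, on the other hand, delivers more structure as a by-product: it produces a phase in which the fiber variables split as $s=(t,u)$ with $\Sigma$ cut out by $\{u=0\}$ inside $C_f$, and it establishes Proposition~\ref{nondegphaseforcomposition}, both of which are reused later in proving the integral-representation theorem (Theorem~\ref{IntegralRep}). Your construction gives an arbitrary transverse $Z_\Sigma$, which suffices for the proposition as stated but would need a further change of fiber coordinates to reach the $(t,u)$ normal form used downstream.
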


We now rephrase this proposition in the language of phase functions.
For any Lagrangian submanifold $\Lambda \subset T^*X$, recall that a function $f=f(x,s) \in C^\infty(X \times \mathbb R^N)$ is said to be a \emph{non-degenerate phase function parametrizing $\Lambda$} if
\begin{enumerate}
  \item zero is a regular value of the map
  \[X \times \mathbb R^N \to \mathbb R^N, \quad (x, s) \mapsto d_sf\] and
  \item the map
  \[\Phi: X \times \mathbb R^N\to T^*X, \qquad (x,s) \mapsto (x, (d_x f)_{(x,s)})\]
  is an embedding from the critical set
  \[
  C_f := \{(x,s)\;:\; d_s f = 0\}
  \]
  onto $\Lambda$ (it is automatically an immersion).
\end{enumerate}
Then Proposition \ref{isotropicHormander} can be restated as
\begin{proposition}\label{paramPhases}
Given a framed isotropic submanifold $(\Sigma,\Lambda)$,  there is a covering of
$\Lambda$ by relative
open sets $\Gamma\subset \Lambda$ such that, for each $\Gamma$, there exists:
\begin{enumerate}
  \item Open sets $\calV\subset X$ and $S\subset\bbR^N$,
  \item A splitting of the variables $S\ni s = (t, u)\in\bbR^K \times\bbR^l$,
  \item A function $f:\calV\times S\to\bbR$
\end{enumerate}
such that:
\begin{enumerate}
  \item[$(a)$] $f$ is a non-degenerate phase function parametrizing $\Gamma$,
  \item[$(b)$] The intersection
  \[
  C_f\cap \{u=0\}
  \]
  is transverse, and
  \item[$(c)$] Under the map $\Phi$ above, $C_f\cap \{u=0\}$ maps onto $\Sigma\cap\Gamma$.
\end{enumerate}
\end{proposition}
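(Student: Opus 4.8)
The plan is to reduce Proposition~\ref{paramPhases} to Proposition~\ref{isotropicHormander} by translating the geometric statement about the horizontal subbundle $H^*Z$ into the language of phase functions. We already know from standard H\"ormander theory that a horizontal Lagrangian $\Lambda_f \subset T^*Z$ with generating function $f$, reduced along $H^*Z = \{(x,s,\zeta,\sigma): \sigma = 0\}$ in $T^*(X \times \bbR^N)$, is precisely the mechanism that makes $f$ a non-degenerate phase function parametrizing $\Lambda$: condition (I) (transversality of $\Lambda_f$ and $H^*Z$) is equivalent to $0$ being a regular value of $(x,s)\mapsto d_s f$, and condition (II) (the bundle map $H^*Z \to T^*X$ restricting to a diffeomorphism $\Lambda_f\cap H^*Z \to \Lambda$) is equivalent to $\Phi$ being an embedding of $C_f$ onto $\Lambda$. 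So clause $(a)$ is immediate once we invoke Proposition~\ref{isotropicHormander} and this dictionary. What remains is to extract clauses $(b)$ and $(c)$ from conditions (III) and (IV) of the isotropic refinement.

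First I would fix a point $\sigma_0 \in \Sigma$ and apply Proposition~\ref{isotropicHormander} to produce, on a neighborhood, the data $\pi: Z \to X$, $f \in C^\infty(Z)$, and the submanifold $Z_\Sigma \subset Z$ with $\Sigma_f = \{(z, df_z): z \in Z_\Sigma\}$ satisfying (III) and (IV). Taking $Z = X \times \bbR^N$ as the text allows, I must arrange coordinates so that $Z_\Sigma$ is cut out by the vanishing of a block of fiber variables. The key point is that $Z_\Sigma$ is a submanifold of $Z$ containing the fiber direction appropriately: since $\Sigma \subset \Lambda$ and $\Sigma_f \cap H^*Z$ maps onto $\Sigma$ while $\Lambda_f \cap H^*Z$ maps onto $\Lambda$, the manifold $Z_\Sigma$ is a submanifold of $Z$ whose image under $\pi$-and-$d_s$-critical-locus is the preimage of $\Sigma$ inside $C_f$. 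I would choose a splitting $\bbR^N = \bbR^K \times \bbR^l$ of the fiber (writing $s = (t,u)$) and, if necessary, shrink and change coordinates on $Z$ fibered over $X$, so that $Z_\Sigma = Z \cap \{u = 0\}$ locally. The codimension $l$ is forced to equal $\operatorname{codim}_\Lambda \Sigma$; this is where one checks the dimension count works out, using that $\Sigma_f \to \Sigma$ and $\Lambda_f \to \Lambda$ are diffeomorphisms on the reduced sets.

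With this normalization in place, clause $(c)$ becomes a restatement of (IV): $C_f \cap \{u=0\}$ is exactly the critical locus of $f$ restricted to the slice $Z_\Sigma = \{u=0\}$, and $\Phi$ maps it onto $\Sigma_f \cap H^*Z$'s image, which is $\Sigma$ (intersected with the relevant $\Gamma$). Clause $(b)$, the transversality of $C_f \cap \{u=0\}$, is the phase-function translation of condition (III): $\Sigma_f$ intersects $H^*Z$ transversally in $T^*Z$. Concretely, transversality of the intersection $C_f \cap \{u=0\}$ inside $\calV \times S$ means the differentials $d(\partial_{s_j} f)$ (for the full set of fiber variables) together with the coordinate differentials $du_1, \dots, du_l$ are independent along the intersection — equivalently, that restricting the phase to $\{u=0\}$ still yields a submersion in the remaining fiber variables $t$ onto $\bbR^K$. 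I would verify the equivalence "(III) $\Leftrightarrow$ $C_f \pitchfork \{u = 0\}$" by the same Hessian/Jacobian computation that underlies the Lagrangian dictionary, now carried out for the isotropic submanifold $\Sigma_f$ and its reduction, together with the observation that restricting to $\{u=0\}$ on the $Z$ side corresponds to restricting to $Z_\Sigma$.

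The main obstacle I anticipate is the bookkeeping in the coordinate normalization: ensuring that a single fiber splitting $s = (t,u)$ simultaneously (i) keeps $f$ a non-degenerate phase for the full Lagrangian $\Lambda$, (ii) realizes $Z_\Sigma$ as the slice $\{u=0\}$, and (iii) is compatible over the base $X$ so that $\pi$ is unchanged. There is genuine content in showing these can be achieved together and not just separately — in particular, the transversality in (III) is what guarantees that the defining functions for $Z_\Sigma$ can be taken to be fiber coordinates rather than a more complicated system mixing base and fiber. I would handle this by an implicit-function-theorem / submersion argument: (III) says $d(d_s f)$ and the conormal to $Z_\Sigma$ are in general position, which lets me complete the conormal directions of $Z_\Sigma$ to a set of fiber coordinates $u$ after shrinking. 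Everything else is a matter of unwinding the reduction-along-$H^*Z$ correspondence, which is already in the literature (e.g.\ \cite{GS}) in the Lagrangian case and requires only the obvious modifications here.
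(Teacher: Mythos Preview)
Your approach is genuinely different from the paper's. You propose to deduce Proposition~\ref{paramPhases} from Proposition~\ref{isotropicHormander} by translating the geometric conditions (I)--(IV) into the phase-function conditions $(a)$--$(c)$. The paper goes the other direction: it treats Propositions~\ref{isotropicHormander} and~\ref{paramPhases} as equivalent rephrasings and proves the latter directly. Its argument observes that any framed isotropic $(\Sigma,\Lambda)$ is locally the image of the model pair $(\Sigma_0,\Lambda_0)$ under a canonical transformation, exhibits the explicit phase $f(x',x'',t,u)=t\cdot(x''-u)$ parametrizing the model, and then invokes Proposition~\ref{nondegphaseforcomposition} to transport this parametrization through the canonical transformation. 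That route is short and constructive: the splitting $s=(t,u)$ is visible from the outset, and no after-the-fact straightening of $Z_\Sigma$ is required.

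Your route has two problems. First, a logical-order issue: in this paper Proposition~\ref{isotropicHormander} is not proved independently---it is stated, immediately recast as Proposition~\ref{paramPhases}, and the proof is postponed to the argument just described---so invoking it here is circular unless you supply a separate proof of the geometric version. Second, and more substantively, the normalization ``choose fiber coordinates so that $Z_\Sigma=\{u=0\}$'' is not justified by what you wrote. For a fiber-preserving change of coordinates on $Z=X\times\bbR^N$ to put $Z_\Sigma$ in the form $\{u=0\}$ you need $\pi|_{Z_\Sigma}:Z_\Sigma\to X$ to be a submersion (equivalently, $Z_\Sigma$ transverse to the fibers). You assert that condition (III) delivers this, but (III) is a transversality of $\Sigma_f$ with $H^*Z$ inside $T^*Z$, not a transversality of $Z_\Sigma$ with the fibers inside $Z$; these are different statements, and the $Z_\Sigma$ handed to you by Proposition~\ref{isotropicHormander} could a priori be tilted relative to the fibration. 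The paper's model-and-transport argument sidesteps this entirely, since in the explicit model phase the slice $\{u=0\}$ is built in from the start and Proposition~\ref{nondegphaseforcomposition} then shows the property survives composition with the phase of the canonical transformation.
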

We postpone the proof until after Proposition \ref{nondegphaseforcomposition}.

\begin{definition}
A function $f$ satisfying (a), (b) and (c) above will be called a \emph{non-degenerate phase function parametrizing the framed isotropic} $(\Sigma, \Lambda)$.
\end{definition}

It is obvious that such parametrizations are not unique. But it is easy to check that, in any parametrization, the number $l$ of $u$ variables must be
the same, namely,
\[l = n - \dim(\Sigma).\]
Note that this $l$ coincides with the  notation used for the splitting of $\mathbb R^n$ in the introduction.

Now let $(\Sigma_1, \Lambda_1)$ be a framed isotropic submanifold of $T^*X$, $\phi: T^*X \to T^*Y$ be a symplectomorphism. Then $(\Sigma_2, \Lambda_2)=\phi(\Sigma_1, \Lambda_1)$ is a framed isotropic in $T^*Y$. Let
$\Gamma \subset T^*(X \times Y)$ be the Lagrangian submanifold associated with $\phi$, and let
\[g=g(x, y, s) \in C^\infty(X \times Y \times S)\]
be a non-degenerate phase function parametrizing $\Gamma$. Then by definition, the map
\[\Phi_2: X \times Y \times S \to T^*(X \times Y), \qquad (x,y,s) \mapsto (x,y, (d_{x,y} g)_{(x,y,s)})\]
maps the critical set
\[
C_g=\{(x, y, s)\ |\ d_sg=0\}
\]
diffeomorphically onto the $\Gamma$. As a consequence,
\begin{equation}\label{eqnrelgGamma}
  \phi(x, -(d_xg)_{x,y,s}) = (y, (d_yg)_{(x,y,s)}), \qquad \forall (x, y, s) \in C_g.
\end{equation}
In proving the main theorem of this section we will need

\begin{proposition}\label{nondegphaseforcomposition}
Let $f=f(x, t, u)$ be a non-degenerate phase function (with fiber variables $t$ and $u$) parametrizing the framed isotropic submanifold $(\Sigma_1, \Lambda_1)$ in $T^*X$, and let $g=g(y, x, s)$ be a non-degenerate phase function (with fiber variables $s$) parametrizing $\Gamma$, the Lagrangian submanifold associated with the symplectomorphism $\phi: T^*X \to T^*Y$. Then the function
\[F(y, x, s, t, u):= f(x, t ,u)+g(y, x, s)\]
is a non-degenerate phase function (with fiber variables $x, s, t$ and $u$) parametrizing the framed isotropic submanifold $(\Sigma_2, \Lambda_2)=\phi(\Sigma_1, \Lambda_1)$ in $T^*Y$.
\end{proposition}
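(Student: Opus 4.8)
The plan is to verify the two clauses in the definition of a non-degenerate phase function for $F(y,x,s,t,u) = f(x,t,u) + g(y,x,s)$, and then to identify the critical set of $F$ and check that $\Phi_F$ carries it onto $(\Sigma_2,\Lambda_2)$ in the prescribed way. First I would compute the fiber differential of $F$ with respect to the full set of fiber variables $(x,s,t,u)$. We have $d_x F = (d_x f)_{(x,t,u)} + (d_x g)_{(y,x,s)}$, $d_s F = (d_s g)_{(y,x,s)}$, $d_t F = (d_t f)_{(x,t,u)}$, $d_u F = (d_u f)_{(x,t,u)}$. Hence the critical set $C_F$ is
\[
C_F = \bigl\{ (y,x,s,t,u) \;:\; (x,t,u)\in C_f,\ (y,x,s)\in C_g,\ (d_x f)_{(x,t,u)} + (d_x g)_{(y,x,s)} = 0 \bigr\}.
\]
The key observation is that the last equation says precisely $(x,-(d_x g)_{(y,x,s)}) = (x,(d_x f)_{(x,t,u)}) \in \Lambda_1$, and then \eqref{eqnrelgGamma} (with the roles of $X$, $Y$ swapped, since here $g=g(y,x,s)$ parametrizes $\Gamma$ for $\phi:T^*X\to T^*Y$) forces $(y,(d_y g)_{(y,x,s)}) = \phi(x,(d_x f)_{(x,t,u)}) \in \phi(\Lambda_1)=\Lambda_2$. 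So $C_F$ fibers over $C_f$: it is the set of $(x,t,u)\in C_f$ together with the (by non-degeneracy of $g$, unique) $(y,s)$ with $(y,x,s)\in C_g$ and $-(d_xg) = (d_xf)$.

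Next I would check non-degeneracy, i.e. that $0$ is a regular value of $(y,x,s,t,u)\mapsto (d_xF, d_sF, d_tF, d_uF)$. Since $f$ is non-degenerate, $(d_tf,d_uf)$ already has surjective differential in the $(t,u)$ directions along $C_f$; since $g$ is non-degenerate, $d_sg$ has surjective differential in $s$ along $C_g$. The remaining component is $d_xF = d_xf + d_xg$, and one must show its differential, restricted to directions transverse to those already used, is surjective onto $\mathbb R^n$ (the $x$-dimension). The cleanest route is to use that $\Phi$ for $g$ is an embedding of $C_g$ onto $\Gamma$, a graph-type Lagrangian for a symplectomorphism, so $C_g$ projects diffeomorphically onto $X\times S$ (via $(x,s)$) — equivalently, the map $(x,s)\mapsto$ (the unique $y$ with $(y,x,s)\in C_g$) is smooth, and $d_xg$ as a function on $C_g$ is a smooth function of $(x,s)$ alone; combined with non-degeneracy of $f$ one gets a clean transversality argument. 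This is essentially the standard fact that the phase function for a composition of a Lagrangian with a canonical graph is again non-degenerate (H\"ormander); I would cite \cite{GS} or \cite{Gui} for the Lagrangian statement and note that the same linear-algebra computation applies verbatim, since $F$ being non-degenerate as a phase for $\Lambda_2$ does not involve the isotropic structure at all.

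It then remains to handle clauses (b) and (c) of the definition of a non-degenerate phase function parametrizing a framed isotropic. For $F$, the $u$-variables among the fiber variables $(x,s,t,u)$ are exactly the old $u$-variables of $f$ (the variables $x,s,t$ play the role of the ``$t$''-block). So $C_F\cap\{u=0\}$ corresponds under the fibration $C_F\to C_f$ to $C_f\cap\{u=0\}$, which by hypothesis (b) for $f$ is a transverse intersection; since $C_F\to C_f$ is a submersion (indeed a diffeomorphism onto its image composed with the bundle projection, by the previous paragraph), transversality is preserved, giving (b) for $F$. Finally, for (c): on $C_f\cap\{u=0\}$ the map $\Phi_f$ lands in $\Sigma_1$ by hypothesis, and the computation in the first paragraph shows $\Phi_F$ on the corresponding subset of $C_F$ equals $\phi\circ\Phi_f$, hence lands in $\phi(\Sigma_1)=\Sigma_2$; surjectivity onto $\Sigma_2\cap(\text{relevant chart})$ follows from surjectivity of $\Phi_f$ onto $\Sigma_1$ together with the fact that $\phi$ is a diffeomorphism.

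The main obstacle I anticipate is the transversality/regularity bookkeeping in the second paragraph: one has to be careful that adding the $x$-block of fiber variables, which is shared between $f$ and $g$, does not destroy non-degeneracy, and the cleanest way to see it is to reduce to the known Lagrangian composition statement rather than to compute the big mixed Hessian of $F$ by hand. Everything involving the isotropic parts ($\Sigma$, the $u=0$ slice) is then a formal consequence of functoriality of $\Phi$ under $\phi$.
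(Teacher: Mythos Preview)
Your proposal is correct and follows essentially the same approach as the paper's proof. Both defer the Lagrangian non-degeneracy of $F$ to the standard composition result (the paper cites \cite{GS}, \S4.3 and Theorem~5.6.1), and both handle the isotropic clauses (b), (c) by showing that the projection $\pi:C_F\to C_f$ is a diffeomorphism (your ``unique $(y,s)$'' observation) and that $\Phi_F=\phi\circ\Phi_1\circ\pi^{-1}$, which the paper records as the commutative diagram \eqref{CFCfdiamgram}; transversality with $\{u=0\}$ and the identification of the image with $\Sigma_2$ then follow exactly as you describe.
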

\begin{proof}
Since $\phi$ is a symplectomorphism, the composition $\Gamma \circ \Lambda_1$ is transversal. As a consequence, $F$ is a non-degenerate phase function parametrizing the Lagrangian submanifold $\Lambda_2=\phi(\Lambda_1)$. In particular, the map
\[\Phi: Y \times (X \times S \times T \times U) \to T^*Y, \qquad (y, x, s, t, u) \mapsto (y, d_yF)\]
is a diffeomorphism from the critical set
      \[C_F:=\{(y, x, s, t, u): d_xF=0, d_tF=0, d_uF=0, d_sF=0\}\]
      onto $\Lambda_2$.
(For a proof of these two assertions, see, e.g. \S 4.3 and Theorem 5.6.1 in \cite{GS}).

It remains to prove
\begin{enumerate}
  \item[(a)] $C_F$ intersects $\{(y, x, s, t, u): u=0\}$ transversally and
  \item[(b)] the intersection $C_F \cap \{(y, x, s, t, u): u=0\}$ gets mapped onto $\Sigma_2$ under the map $\Phi$.
\end{enumerate}

Note that by definition,
\[
C_f=\{(x, t, u)\ :\ d_tf=0, d_uf=0\}
\]
intersects the set $\{(x, t, u)\ |\ u=0\}$ transversally and the intersection gets mapped diffeomorphically onto $\Sigma_2$ under the map
\[\Phi_1: X \times T \times U \to T^*X, \qquad (x, t, u)\mapsto (x, d_xf).\]

Denote $\pi: Y \times X \times T \times U \times S \to X \times T \times U$ be the standard projection map.
Then by definition, $\pi(C_F) \subset C_f$. We can show that $\pi|_{C_F}: C_F \to C_f$ is a diffeomorphism.
In fact, by definition, $(y, x, t, u, s) \in C_F$ implies $(x, y, s) \in C_g$ and $d_xf=-d_xg$.
In view of \eqref{eqnrelgGamma},
the following diagram commutes:
	\begin{equation}\label{CFCfdiamgram}
	\begin{array}{rcccc}
 Y \times X \times T \times U \times S \supset  & C_F  & \stackrel{\Phi}{\longrightarrow} & \Lambda_2 & \subset T^*Y\\
	& {\small \pi}{\downarrow} & & {\uparrow\phi}\\
	X \times T \times U \supset & C_f  & \stackrel{\Phi_1}{\longrightarrow} & \Lambda_1 & \subset T^*X
	\end{array}.
	\end{equation}
So $\pi$ maps $C_F$ diffeomorphically onto $C_f$. Since $C_f$ intersects $\{(x, t, u)\ |\ u=0\}$ transversally, we conclude that $C_F$ intersects $\{(y, x, t, s, u): u=0\}$ transversally.

Finally, to prove the assertion (b), it is enough to chase the commutative diagram \eqref{CFCfdiamgram} with $C_F$ replaced by $C_F \cap \{(y, x, t, s, u): u=0\}$, to get
	\begin{equation}\label{CFCfSigmadiamgram}
	\begin{array}{ccc}
  C_F  \cap \{(y, x, s, t, u): u=0\} & \stackrel{\Phi}{\longrightarrow} & \Sigma_2 \\
	{\small \pi}{\downarrow} & & {\uparrow\phi}\\
	 C_f  \cap \{(x, t, u): u=0\}  & \stackrel{\Phi_1}{\longrightarrow} & \Sigma_1
	\end{array}.
	\end{equation}

\end{proof}

\begin{proof}
[{Proof of Proposition \ref{paramPhases}:}]  It is easy to see that, locally, any
framed isotropic $(\Lambda, \Sigma)$ can be mapped by a canonical transformation to the
pair $(\Lambda_0,\Sigma_0)$ where $\Lambda_0\subset T^*\bbR^n$ is the zero section and $\Sigma_0$ the
model isotropic (\ref{Sigma0}) of the appropriate dimension.  By Proposition \ref{nondegphaseforcomposition},
it suffices to show that there is a non-degenerate phase function parametrizing $(\Lambda_0,\Sigma_0)$,
and one readily can check that
\begin{equation}\label{}
	f(x',x'', t,u ) = t\cdot (x''-u)
\end{equation}
is such a phase function.
\end{proof}

\subsection{Amplitudes and the local form}
We now introduce smooth amplitudes
\[
a(x, t, u, \hbar)\in C^\infty(\calV\times \calT\times \bbR^l \times (0, \hbar_0)),\quad \calV\subset\bbR^n,\ \calT\subset\bbR^K\ \text{open},
\]
which are Schwartz functions in the $u$ variable, with estimates locally uniform in
$(x,t)$.  We will also have to allow for $\h$-dependence, in the $C^\infty$ topology
in $(x,t)$:
\begin{equation}\label{amplitutea}
\forall \alpha, \, \beta,\,\gamma,\, m \quad\exists C\ \text{such that} \ \forall(x, t, u)\in X\times \bbR^N
\quad |\partial^\alpha_x\partial^\beta_t\partial_u^\gamma a(x,t,u,\hbar)| \leq C \left(1+\norm{u}\right)^{-m}.
\end{equation}

We will further assume that there exists a sequence $\{a_j(x,t,u)\}$ of smooth functions, compactly
 supported in the $t$ variable and Schwartz  in the $u$ variable, and such that
\begin{equation}\label{amplit}
a(x,t,u,\h) \sim \sum_{j=0}^\infty a_j(x,t,u)\, h^{j/2}.
\end{equation}

We can now restate Theorem \ref{AltDefIsoSTh} as follows, which is the main result of this section:

\begin{theorem}\label{IntegralRep}
Given a parametrization of $(\Sigma,\Lambda)$, $\Lambda\subset T^*X$,
by a phase function
$f: X\times \calT\times U\to\bbR$ where $U\subset \bbR^l$, $\calT \in \bbR^K$ and $N=K+l$, then for amplitudes given by \eqref{amplit},
\begin{equation}\label{localExpression}
\Upsilon(x,\h) = \h^{r-\frac N2}\int_{\calT\times U} e^{\sqrt{-1}\h^{-1}f(x,t,u)} a(x,t,\hbar^{-1/2}u, \h)\, dt du
\end{equation}
is in the class $I^r(X,\Sigma)$.  Conversely, if $\Upsilon\in I^r(X, \Sigma)$ and $\sigma_0\in\Sigma$, then microlocally
near $\sigma_0$, $\Upsilon$ is equal to an integral of the previous form.
\end{theorem}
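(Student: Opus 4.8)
The plan is to reduce everything to the model case by the following strategy. First, for the forward direction, I would verify the claim directly for the model phase function $f(x', x'', t, u) = t \cdot (x'' - u)$ constructed in the proof of Proposition \ref{paramPhases}. Plugging this into \eqref{localExpression}, the $t$-integral is (up to a constant) a Fourier transform in $t$ evaluated at $(x'' - u)/\h$; carrying out the rescaling $u \mapsto \h^{1/2} u$ and $t \mapsto \h t$ one finds that $\Upsilon(x, \h)$ becomes $\h^{r'} \psi(x', \h^{-1/2} x'', \h)$ for an amplitude $\psi$ that is Schwartz in the second slot with the right asymptotic expansion — i.e. exactly an element of the model class $I^r(\bbR^n, \Sigma_0)$ from \eqref{basicdef}. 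The power of $\h$ bookkeeping is the routine part; one must track the $\h^{r - N/2}$ prefactor, the Jacobian $\h^{l/2}$ from $du$, and the $\h^K$ from the rescaled $t$-integral (which is a delta-like concentration), and check these combine to give order $r$.

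Second, I would pass from the model $(\Lambda_0, \Sigma_0)$ to a general $(\Sigma, \Lambda)$. By the proof of Proposition \ref{paramPhases}, locally there is a canonical transformation $\phi$ taking $(\Lambda_0, \Sigma_0)$ to $(\Sigma, \Lambda)$, and by Proposition \ref{nondegphaseforcomposition} the composed function $F(x, y, s, t, u) = f_0(y, t, u) + g(x, y, s)$ — with $f_0$ the model phase and $g$ a non-degenerate phase for $\Gamma_\phi$ — parametrizes $(\Sigma, \Lambda)$. The integral \eqref{localExpression} built from $F$ then factors: integrating first in $(t,u)$ produces (by the model computation) an element $\Upsilon_0 \in I^{r'}(\bbR^n, \Sigma_0)$, and the remaining integral in $s$ against $e^{\sqrt{-1}\h^{-1} g(x,y,s)}$ is precisely the action of the FIO $F_\phi$ quantizing $\phi$. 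By the already-established invariance of the model class under FIOs associated to canonical transformations (stated in the introduction and proved in \cite{GUW}), the result lies in $I^r(X, \Sigma)$. One must check that an arbitrary amplitude $a(x, t, u, \h)$ on the $F$-side, Schwartz in $u$ with expansion \eqref{amplit}, pulls back/pushes through this factorization to a legitimate amplitude at each stage; since the $s$-integration is a standard oscillatory integral with compactly supported amplitude this is unproblematic, but it requires invoking the stationary-phase/clean-intersection machinery of \cite{GS}, \S 4.3, rather than just elementary manipulation.

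Third, for the converse, I would argue as follows. Given $\Upsilon \in I^r(X, \Sigma)$ and $\sigma_0 \in \Sigma$, microlocally near $\sigma_0$ one may use a quantized canonical transformation $F_\phi$ to reduce to the model situation: $F_\phi^{-1} \Upsilon$ lies microlocally in $I^{r'}(\bbR^n, \Sigma_0)$ and hence, by definition \eqref{basicdef}, has the form $\psi(x', \h^{-1/2} x'', \h)$ with $\psi$ admitting the expansion \eqref{2exp}. Writing $\psi$ in terms of its partial Fourier transform in the $x''$-slot realizes it as a model oscillatory integral of the form \eqref{localExpression} with the model phase $t \cdot (x'' - u)$ (this is the inverse of the computation in the first step). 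Applying $F_\phi$ and using Proposition \ref{nondegphaseforcomposition} again — now reading the composition of phase functions in the other direction — exhibits $\Upsilon$ itself, microlocally near $\sigma_0$, as an integral of the asserted form with phase a non-degenerate parametrization of $(\Sigma, \Lambda)$. Any two such parametrizations near $\sigma_0$ are related by the standard moves (fiber-variable changes, addition of nondegenerate quadratic forms in new variables, diffeomorphisms of the fiber), which preserve the class of oscillatory integrals \eqref{localExpression}; this is where one cites the isotropic analogue of H\"ormander's equivalence-of-phase-functions argument.

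The main obstacle I anticipate is \emph{not} the model computation but the careful treatment of the $s$-integration step in the presence of the $u$-rescaling — that is, showing that interchanging "do the $(t,u)$-integral first, then the FIO in $s$" with "do the full $F$-integral at once" is legitimate, and that the resulting amplitude at each stage genuinely satisfies the Schwartz-in-$u$ and $C^\infty$-in-$(x,t)$ estimates \eqref{amplitutea}--\eqref{amplit}. Because the $u$-variables are scaled by $\h^{1/2}$ while the $s$-variables are not, the usual Fubini/stationary-phase arguments must be applied in the right order and with uniform-in-$u$ control; this is the technical heart of the proof, and I would handle it by first establishing the factorization at the level of formal symbols and then justifying it with the clean-composition estimates from \S 3.
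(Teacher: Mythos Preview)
Your plan has the right architecture but outsources the hardest step. In the forward direction you only verify \eqref{localExpression} for one particular phase function, namely the model phase $t\cdot(x''-u)$ possibly composed with an FIO phase $g$. But the theorem must hold for an \emph{arbitrary} non-degenerate phase $f$ parametrizing $(\Sigma,\Lambda)$. To bridge that gap you invoke ``the isotropic analogue of H\"ormander's equivalence-of-phase-functions argument''; however, no such result is available off the shelf, and establishing it --- i.e.\ showing that the standard moves among phase functions preserve the class of integrals with the $\h^{-1/2}u$ scaling in the amplitude --- is essentially the content of the theorem itself. The mixed scaling (fiber variables $t$ unscaled, fiber variables $u$ scaled by $\h^{-1/2}$) means that a generic fiber diffeomorphism mixing $t$ and $u$ does \emph{not} obviously preserve the form \eqref{localExpression}, so this cannot be dismissed as routine.

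The paper circumvents the problem by a direct argument at the model level (Lemma \ref{specialUpsilon}): given \emph{any} non-degenerate phase $f$ parametrizing $(\Sigma_0,\Lambda_0)$, one shows by hand that the integral lands in the model class. The key is a coordinate change on the fiber: since the projection $C_f\to\calV$ is a local diffeomorphism and $C_f\cap\{u=0\}\to\Sigma_0$ is too, one can introduce new $u$-coordinates so that $C_f\subset\{x''=u_{\text{new}}\}$, then Taylor expand $f$ to second order in $(x''-u)$, $t$, and $u$ separately, and finally rescale $t\mapsto\sqrt{\h}\,t$, $u\mapsto\sqrt{\h}\,u$ to exhibit $\Upsilon(x',\sqrt{\h}\,x'',\h)$ as Schwartz in $x''$ with the correct expansion. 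Once this lemma is in hand, the general forward direction follows your outline but run in the opposite direction: given an arbitrary phase $f$ for $(\Sigma,\Lambda)$, apply an FIO $F$ quantizing a symplectomorphism to the model; by Proposition \ref{nondegphaseforcomposition} the composed phase $g+f$ parametrizes $(\Sigma_0,\Lambda_0)$, and now Lemma \ref{specialUpsilon} (valid for arbitrary phases at the model) gives $F(\Upsilon)\in I^r(\bbR^n,\Sigma_0)$, hence $\Upsilon\in I^r(X,\Sigma)$.

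Your converse is close in spirit to the paper's \S 2.4, but the paper is more economical: rather than Fourier-inverting the model function to write it against the model phase and then composing, it observes directly that the FIO phase $f(x,y',y'',s)$, with $(y',y'',s)$ treated as fiber variables, already parametrizes the framed isotropic $\Phi(\Sigma_0,\Lambda_0)$, and that Taylor-expanding the FIO amplitude $b$ in $y''$ at $y''=0$ produces an amplitude of the required Schwartz-in-$u$ type. This sidesteps the equivalence-of-phases issue entirely in that direction as well.
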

Note that the integration over $u$ is only over the open set $U$.   However, as a function of $u\in U$,
the function $a$ is defined on all of $\bbR^l$, which in particular implies that for any $\h$ the integrand
above is well-defined and the integral converges absolutely.
Note also that $a$ may be chosen to be compactly supported in the $u$ variable.

\bigskip
A preliminary result is:
\begin{lemma}\label{WFSet}
If $\Upsilon$ is as in \eqref{localExpression}, then its semiclassical wave-front set equals
\begin{equation}\label{wfset}
\text{WF}(\Upsilon) = \{ (x,(d_xf)_{(x,t,0)})\;;\; (d_sf)_{(x,t,0)} = 0\ \text{and}\ (x,t,0)\in\mathrm{supp}(a)\},
\end{equation}
where we have let $s=(t,u)$.
In particular
\begin{equation}\label{wfset2}
\text{WF}(\Upsilon) \subset \Sigma.
\end{equation}
\end{lemma}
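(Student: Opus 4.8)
The plan is to compute the semiclassical wave-front set of $\Upsilon$ directly from the integral representation \eqref{localExpression} by the standard stationary/non-stationary phase dichotomy, after first rescaling the $u$-variable to put the integral in a more transparent form. First I would substitute $u = \h^{1/2}v$ in \eqref{localExpression}, so that
\[
\Upsilon(x,\h) = \h^{r-\frac N2 + \frac l2}\int_{\calT} \int_{\h^{-1/2}U} e^{\sqrt{-1}\h^{-1} f(x,t,\h^{1/2}v)} a(x,t,v,\h)\, dt\, dv .
\]
Because $a$ is Schwartz in its third slot with estimates locally uniform in $(x,t)$ and compactly supported in $t$, and because $U$ is open so that for small $\h$ the domain $\h^{-1/2}U$ exhausts a full neighborhood of any fixed $v$, the $v$-integral converges absolutely with all its $\h$-derivatives controlled. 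Then I would Taylor-expand the phase: since $d_uf$ vanishes on $C_f$ but need not vanish off it, one writes $f(x,t,\h^{1/2}v) = f(x,t,0) + \h^{1/2} v\cdot(d_uf)_{(x,t,0)} + \h\, R(x,t,v,\h)$, with $R$ smooth and polynomially bounded in $v$. This exhibits $\Upsilon$ as $\h^{r'}$ times an oscillatory integral in $t$ with phase $f(x,t,0)$ and an amplitude that itself is an absolutely convergent $v$-integral — i.e. exactly the form of a semiclassical Lagrangian (or isotropic-in-$t$-only) integral, to which the standard wave-front calculus applies.

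The core of the argument is then the usual non-stationary phase / integration-by-parts estimate. Fix a point $(x_0,\xi_0) \in T^*X$ not in the set on the right-hand side of \eqref{wfset}. There are two cases. If $x_0 \notin \{x : \exists t,\ (x,t,0)\in\mathrm{supp}(a),\ (d_tf)_{(x,t,0)} = (d_uf)_{(x,t,0)} = 0\}$ — equivalently, if for every $t$ with $(x_0,t,0)\in\mathrm{supp}(a)$ one of $d_tf$, $d_uf$ is nonzero at $(x_0,t,0)$ — then on the support of the (rescaled) amplitude the phase $f(x,t,0)$ has no critical point in $t$ for $x$ near $x_0$, OR the linear-in-$v$ term $v\cdot(d_uf)_{(x,t,0)}$ is non-stationary; in either subcase repeated integration by parts in $t$ (respectively, using the Schwartz decay and the $v\cdot d_uf$ oscillation) gives $O(\h^\infty)$ locally uniformly in $x$ near $x_0$, hence $(x_0,\xi_0)\notin\mathrm{WF}(\Upsilon)$ for every $\xi_0$. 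Otherwise $x_0$ does lie in that projection, but $\xi_0 \neq (d_xf)_{(x_0,t,0)}$ for every $t\in C_f\cap\{u=0\}$ over $x_0$; then, after multiplying by $e^{-\sqrt{-1}\h^{-1}\langle x,\xi_0\rangle}$ and localizing, the total phase $f(x,t,0) - \langle x,\xi_0\rangle$ has nonvanishing $x$-derivative at every point of the critical set in $t$, and one integrates by parts in $x$ (using that the amplitude, including the convergent $v$-integral, is smooth with bounded derivatives in $x$) to again gain $O(\h^\infty)$. This proves $\mathrm{WF}(\Upsilon)$ is contained in the right-hand side of \eqref{wfset}.

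For the reverse inclusion I would argue that at a point $(x_0,(d_xf)_{(x_0,t_0,0)})$ with $(d_sf)_{(x_0,t_0,0)}=0$ and $(x_0,t_0,0)\in\mathrm{supp}(a)$, one can choose the amplitude's leading term $a_0$ so that the stationary-phase contribution does not cancel — i.e. these points genuinely persist in the wave-front set; this is the standard non-degeneracy/non-vanishing-of-the-principal-symbol observation and I would only sketch it, noting that $f$ being a \emph{non-degenerate} phase function guarantees the relevant Hessian in $t$ is nondegenerate transverse to the critical manifold so that the model computation applies. Finally, \eqref{wfset2} follows immediately: by property (c) of the parametrization (Proposition \ref{paramPhases}), the map $\Phi:(x,t)\mapsto(x,(d_xf)_{(x,t,0)})$ sends $C_f\cap\{u=0\}$ onto $\Sigma\cap\Gamma$, so the set in \eqref{wfset}, being the image under $\Phi$ of a subset of $C_f\cap\{u=0\}$, is contained in $\Sigma$. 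The main obstacle is bookkeeping in the first case of the non-stationary estimate: one must handle \emph{simultaneously} the possible non-stationarity in $t$ and the oscillation/decay in the rescaled $v$-variable, and make all the integration-by-parts bounds locally uniform in $x$ so that they genuinely yield a statement about the wave-front set rather than just pointwise $O(\h^\infty)$ decay; this is routine but must be done carefully because the $v$-domain $\h^{-1/2}U$ itself depends on $\h$.
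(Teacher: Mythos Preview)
Your approach is correct but takes a genuinely different route from the paper's. The paper proceeds in stages using the functoriality of wave-front sets under push-forward: it first computes $\mathrm{WF}$ of the unintegrated function $\Upsilon_0(x,u)=e^{i\hinv f(x,u)}a(x,\h^{-1/2}u)$ (the graph of $df$ restricted to $\{u=0\}$, using only that $a(x,\h^{-1/2}u)$ is $O(\h^\infty)$ away from $u=0$), then obtains $\mathrm{WF}$ of $\int_U\Upsilon_0\,du$ by the push-forward calculus for the projection $X\times U\to X$, and finally handles the $t$-integration by one more push-forward $X\times\calT\to X$. No explicit two-scale integration by parts is needed; the $\h^{-1/2}$ issue is absorbed into the single observation about $\Upsilon_0$, and everything else is the standard Lagrangian wave-front calculus. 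Your direct approach---rescaling $u=\h^{1/2}v$, Taylor-expanding the phase, and running non-stationary phase in whichever of $t$, $v$, or $x$ is available---is the natural hands-on alternative and is self-contained, but as you note it requires a partition of unity in $t$ to separate the subcases (since at different $t$ over the same $x_0$ it may be $d_tf$ or $d_uf$ that fails to vanish) and careful tracking of the mixed $\h^{-1}$ and $\h^{-1/2}$ scales. The paper's route is shorter and more conceptual; yours makes the mechanism explicit. Both arguments, in fact, establish only the containment $\mathrm{WF}(\Upsilon)\subset\{\ldots\}$ rigorously---the paper does not prove the reverse inclusion either, and only \eqref{wfset2} is used in the sequel.
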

\begin{proof}
Let us begin by considering a function of the form
\begin{equation}\label{}
\Upsilon_0(x,u) = e^{i\h^{-1}f(x,u)}\, a(x,h^{-1}u)
\end{equation}
on an open set of the form $X\times U$.  (We will see below that in fact $\Upsilon_0$ is one
of the oscillatory functions associated to the pair $(\Sigma,\Lambda)$ where $\Lambda$ is the
graph of $df$ and $\Sigma$ the part of $\Lambda$ where $u=0$.)
It is clear that, for any compact set $D\subset X\times U$ such that $D\cap \left(X\times\{0\}\right) = \emptyset$, one has
\[\Upsilon_0|_D = O(\hbar^\infty)\]
in the $C^\infty$ topology, by the fact that $a(x,u)$ is Schwartz in $u$.  On the other hand
the wave-front set of $e^{i\h^{-1}f(x,u)}$ is just the graph of $df$.  From this it follows that
\begin{equation}\label{}
\text{WF}(\Upsilon_0) = \{ (x, u ; df_{(x,u)})\;:\; u=0\ \text{and}\ (x,0)\in\text{supp}(a)\}.
\end{equation}

Consider next a function of the form
\begin{equation}\label{2ndApprox}
\Upsilon_1(x) = \int_U e^{i\h^{-1}f(x,u)}\, a(x,h^{-1}u)\, du.
\end{equation}
This is simply the push-forward by the natural projection $X\times U\to X$ of a function $\Upsilon_0$
of the form considered above.  By the calculus of wave-front sets, we obtain for the wave-front set of
(\ref{2ndApprox})
\begin{equation}\label{wf2ndApprox}
\text{WF}(\Upsilon_1) =
\{ (x, (d_xf)_{(x,u)})\;:\; (d_uf)_{(x,u)}=0\ \text{and}\ u=0\ \text{and}\ (x,0)\in\text{supp}(a)\}.
\end{equation}

Consider now the general case (as in the statement of the Lemma).  By (\ref{wf2ndApprox})
we know the wave-front set of
\[
\Upsilon_2(x,t) =  \int_{ U} e^{i\h^{-1}f(x,t,u)} a(x,t,\hbar^{-1/2}u, \h)\, du, \quad (x,t)\in U\times\calT.
\]
But $\Upsilon = \pi_*(\Upsilon_2)$, where $\pi: X\times\calT\to X$ is the natural projection.
The general result follows, again by the calculus of wave-front sets.
\end{proof}

\subsection{Proof of the first half of Theorem \ref{IntegralRep}}

To prove the first half of Theorem \ref{IntegralRep}, we first prove a special case:
\begin{lemma}\label{specialUpsilon}
Let $x=(x',x'')$ denote the variables in $\bbR^n=\bbR^{k+l}$, $\Lambda_0\subset T^*\bbR^{k+l}$
the zero section, and $\Sigma_0 = \Lambda_0\cap\{x''=0\}$.  Let $\calV\subset \bbR^{k+l}$,
$\mathcal S \subset\bbR^{K+l}$
be open sets and $f:\calV\times\mathcal S\to\bbR$, $f=f(x,t,u)$
be a non-degenerate phase function parametrizing $(\Sigma_0, \Lambda_0)$.
Let $a(x,t,u, \hbar)$ be an amplitude as in (\ref{amplit}).  Then
\[
\Upsilon(x, \hbar) = \h^{r-\frac N2}\int e^{\sqrt{-1}\hinv f(x,t,u)} a(x,t, u/\sqrt{\h}, \hbar)\, dt\, du
\]
is in the model class of isotropic functions $I^r(\mathbb R^n, \Sigma_0)$.
\end{lemma}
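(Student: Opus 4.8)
The plan is to reduce the general integral in Lemma \ref{specialUpsilon} to the model form \eqref{basicdef} by a sequence of explicit changes of variable in the $t$ and $u$ integrations, exploiting the precise structure of a non-degenerate phase function parametrizing the model pair $(\Sigma_0,\Lambda_0)$. First I would normalize the phase. Since $f$ parametrizes the zero section $\Lambda_0$, on the critical set $C_f = \{d_t f = 0, d_u f = 0\}$ we have $d_x f = 0$ and $f$ is constant (we may take it to be $0$); moreover by Proposition \ref{paramPhases} the locus $C_f \cap \{u = 0\}$ maps onto $\Sigma_0 = \{x'' = 0\}$, so along $C_f$ the condition $u = 0$ is equivalent to $x'' = 0$. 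The simplest representative, produced in the proof of Proposition \ref{paramPhases}, is $f(x',x'',t,u) = t\cdot(x'' - u)$ with $K = l$; I would first carry out the argument for this $f$ and then indicate how the general case follows by the equivalence-of-phase-functions machinery (H\"ormander's lemma on non-degenerate phase functions differing by fiber changes of variable and addition of nondegenerate quadratic forms), applied with the extra bookkeeping that the splitting $s = (t,u)$ and the subset $\{u = 0\}$ are respected.

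For the model phase $f = t\cdot(x'' - u)$, the integral becomes
\[
\Upsilon(x,\h) = \h^{r - l}\int_{\bbR^l\times\bbR^l} e^{\sqrt{-1}\h^{-1} t\cdot(x'' - u)}\, a(x', x'', t, u/\sqrt{\h}, \h)\, dt\, du.
\]
Here I would substitute $u = \sqrt{\h}\, v$, giving a factor $\h^{l/2}$ and turning the phase into $\h^{-1} t\cdot x'' - \h^{-1/2} t\cdot v$, then substitute $t = \sqrt{\h}\, w$, giving another $\h^{l/2}$ and phase $\h^{-1/2} w\cdot x'' - w\cdot v$. Writing $x'' = \sqrt{\h}\cdot(x''/\sqrt{\h})$ inside the first term, the result is $\h^{r}$ times an integral over $(w,v)$ of $e^{\sqrt{-1}(w\cdot(x''/\sqrt{\h}) - w\cdot v)} a(x', \sqrt{\h}\, v, \sqrt{\h}\, w, v, \h)$. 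Integrating in $w$ is (up to constants) an inverse Fourier transform in the $v$ variable evaluated at $x''/\sqrt{\h}$; since $a$ is Schwartz in its $u$-slot (here the variable $v$) with estimates locally uniform in the remaining variables and admits the expansion \eqref{amplit}, the $w$- and $v$-integrations produce a function of the form $\varphi(x', x''/\sqrt{\h}, \h)$ where $\varphi$ is Schwartz in its second argument, uniformly in $x'$, and inherits an asymptotic expansion in powers of $\h^{1/2}$ of the required type \eqref{2exp}. That is exactly the definition \eqref{basicdef} of a member of $I^r(\bbR^n, \Sigma_0)$; the wave-front statement is already covered by Lemma \ref{WFSet}.

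The main obstacle is the passage from the model phase to an arbitrary non-degenerate phase function parametrizing $(\Sigma_0,\Lambda_0)$. One must invoke the classical classification of non-degenerate phase functions parametrizing a fixed Lagrangian (locally two such are related by a fiber-preserving diffeomorphism composed with stabilization by a nondegenerate quadratic form in extra variables), and upgrade it so that the distinguished splitting $s = (t,u)$ and the coisotropic slice $\{u = 0\}$ are carried along — this is what condition (b)/(c) of Proposition \ref{paramPhases} encodes, and it forces the $u$-block and $t$-block to transform within themselves up to lower-order terms. I would set this up by first using a $t$-fiber change of variables to reduce the phase, on a neighborhood of the relevant component of $C_f$, to one in which the $u$-dependence is $-t'\cdot u$ plus a term vanishing to second order along $\{u=0\}\cap C_f$, then absorb the error by a further substitution and by the stationary-phase/Borel-summation argument in the remaining (quadratic) $t$-variables, exactly as in the Lagrangian theory of \cite{GS}, checking at each step that the Schwartz-in-$u$ structure of the amplitude and the scaling $u \mapsto u/\sqrt{\h}$ are preserved. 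Once the phase is brought to model form and the amplitude is shown to still satisfy \eqref{amplit} (now possibly with extra quadratic variables that are disposed of by semiclassical stationary phase contributing only integer and half-integer powers of $\h$ and leaving the order $r$ unchanged after accounting for the normalizing power), the computation of the previous paragraph finishes the proof.
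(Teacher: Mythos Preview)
Your computation for the particular phase $f=t\cdot(x''-u)$ is clean and correct, and it is a good idea to see the mechanism there first.  The difficulty is exactly where you locate it: passing from an arbitrary non-degenerate phase parametrizing $(\Sigma_0,\Lambda_0)$ to this model.  The step you label ``upgrade H\"ormander's equivalence so that the $(t,u)$-splitting and the slice $\{u=0\}$ are carried along'' is not supplied, and the assertion that conditions (b)/(c) of Proposition~\ref{paramPhases} ``force the $u$-block and $t$-block to transform within themselves up to lower-order terms'' is not justified.  A fiber-preserving diffeomorphism bringing one phase to another will in general mix $t$ and $u$, and after such mixing the amplitude $a(x,t,u/\sqrt\h,\h)$ no longer has the Schwartz-in-$u$ structure in any evident way; you would have to prove that the mixing can be arranged to preserve the $u$-scaling, which is essentially the content of the lemma itself.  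There is also the stabilization issue: the model has $K=l$, while a general $f$ need not, so you would be adding quadratic forms in extra fiber variables and must check how those interact with the distinguished $u$-slot.  As written, the general case is a program rather than a proof.

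The paper takes a more direct route that sidesteps the equivalence-of-phases machinery entirely.  Instead of reducing $f$ to $t\cdot(x''-u)$, it only normalizes the critical set: since $C_f\to\Lambda_0$ is a diffeomorphism projecting onto $x$-space and $C_f\cap\{u=0\}\to\Sigma_0$ is a diffeomorphism, one can solve on $C_f$ for $x''$ as a smooth function of $(x',u)$, and then change \emph{only the $u$-variable} (to $u_{\text{new}}=g(x',u)$) so that $C_f\subset\{x''=u_{\text{new}}\}$.  After that single coordinate change the phase has, by Taylor expansion and the condition $d_xf|_{C_f}=0$, the form (constant) $+$ (quadratic in $t$) $+$ (quadratic in $u$) $+$ (quadratic in $x''-u$), with smooth coefficients.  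One then substitutes $x''\to\sqrt\h\,x''$, $t\to\sqrt\h\,t$, $u\to\sqrt\h\,u$ simultaneously; all three quadratic pieces become $\h$-independent, the Jacobian supplies the required $\h^{N/2}$, and the amplitude expands in half-integer powers of $\h$ with each term Schwartz in the new $u$, hence in $x''$.  The point is that this argument never needs to bring $f$ all the way to a canonical form, only to expose enough of its Taylor structure near $C_f$; that is what makes it go through for arbitrary $K$ and without any block-diagonal claim about fiber diffeomorphisms.
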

\begin{proof}
We will apply the method of stationary phase to the integral.  For this we need to
understand the critical points of the phase with respect to the fiber variables $s = (t,u)$.

By definition, for the critical set $C_f=\{(x, s) \ |\ d_s f=0\}$  we know that
\begin{itemize}
  \item the jacobian $\mathbb J = \begin{pmatrix} \frac{\partial^2 f}{\partial x \partial s} & \frac{\partial^2 f}{\partial s\partial s} \end{pmatrix}$
  has full rank at each point on $C_f$,
  \item the projection $\pi_1: C_f \to \calV, (x, s) \mapsto x$ is a local diffeomorphism,
  \item  $C_f$ intersect transversally with the set $\{u=0\}$,
  \item the intersection $C_f \cap \{u=0\}$ is diffeomorphic to $\Sigma_0$.
\end{itemize}

Take any point $p$ in the intersection $C_f \cap \{u=0\}$. Then as we know, the map
\[\pi_1: C_f \to \calV, \quad (x',x'',t,u) \mapsto (x',x'')\]
is a local diffeomorphism near $p$. So in particular, there exist  functions $f_1, f_2$ so that on $C_f$, $t=f_1(x', x'')$ and $u=f_2(x',x'')$ near $p$.   We also know that $C_f$ intersects $\{u=0\}$ transversally and the map
\[\Psi: C_f \cap \{u=0\} \to \Sigma_0, \quad (x',x'',t, u) \mapsto (x',x'',t,0) \mapsto (x',0,0,0)\]
is a local diffeomorphism near $p$. As a consequence, we see that the matrix $(\frac{\partial f_2}{\partial x''})$ is non-degenerate.
Therefore, there is a smooth function $g$ so that on $C_f$,  $x''=g(x',u)$ near $p$. Now we define a new coordinate system of $\mathbb R^{n+N}$ near $p$,
\[
(x', x'', t, u_{\text{\em new}}) = (x', x'', t, g(x',u)),
\]
and we denote the inverse coordinate transformation by
\[
(x', x'', t, u) = (x', x'', t, \tilde g(x', u_{\text{\em new}})).
\]
Then under this new coordinate system, the critical set $C_f$ is contained in the set $\{x''=u_{\text{\em new}}\}$.
As a consequence, the phase function $f$ can be written, in these new coordinates, as
\[
f(x',x'',t,u) = f_0(x',t,u)  +\sum_{|\alpha|=2} f_\alpha(x',x'', t, u)(x''-u)^\alpha.
\]
According to the condition that $f$ parametrizes $(\Sigma_0, \Lambda_0)$, we conclude that

\[f_0(x',t,u) = C_0+\sum_{|\gamma|=2}h_\gamma(x', t)t^\alpha + \sum_{|\beta|=2}g_\beta(x', t, u) u^\beta.\]
It follows that 
\[\aligned
\Upsilon(x)= e^{\sqrt{-1}\hinv C_0}\int & e^{\sqrt{-1} \hinv\sum h_\gamma(x', t)t^\alpha + g_\beta(x', t, u)u^\beta+ f_\alpha(x',x'', t, u)(x''-u)^\alpha} \\
  & \cdot \tilde a(x',x'',t, u, \hbar)\det J\, dt\, du,
\endaligned\]
where the summation is over ${|\alpha|=2, |\beta|=2, |\gamma|=2}$,
\[
\tilde a(x',x'', t, u, \hbar) = a(x',x'', t, \tilde g(x', u)/\sqrt \hbar, \hbar),
\]
and $J=(\frac{\partial g}{\partial u})$. We want to show that the function $\Upsilon(x', \sqrt{\hbar}x'', \hbar)$ is Schwartz in $x''$. In fact, by changing variables $t \to \sqrt \hbar t$ and $u \to \sqrt \hbar u$, we have
\[
\aligned
\Upsilon(x', \sqrt{\hbar}x'', \hbar )= \hbar^{N/2} e^{\sqrt{-1}\hbar^{-1}C_0}\int &  e^{\sqrt{-1} \sum h_\gamma(x', \sqrt \hbar t)t^\alpha + g_\beta(x', \sqrt \hbar t, \sqrt \hbar u)u^\beta+ f_\alpha(x', \sqrt \hbar  x'', \sqrt \hbar t, \sqrt \hbar u)(x''-u)^\alpha} \\
& \tilde a(x', \sqrt \hbar x'', \sqrt\hbar t, \sqrt{\hbar}u, \hbar) \det J dtdu.
\endaligned
\]
Noting that $\tilde g(x', \sqrt \hbar u)/\sqrt h$ has an expansion in nonnegative powers of $\hbar$, we see that
\[
\tilde  a(x', \sqrt \hbar x'', \sqrt\hbar t, \sqrt{\hbar}u, \hbar) \sim \hbar^{r-\frac N2} \sum \tilde a_j(x, t, u)\hbar^{j/2},
\]
where each $\tilde a_j$ is Schwartz in $u$. Now the conclusion follows.
\end{proof}

To prove the general case, we let $\phi: T^*X \to T^*\mathbb R^n$ be a symplectomorphism such that
\[\phi(\Sigma, \Lambda)=(\Sigma_0, \Lambda_0).\]
Let $F$ be a semi-classical Fourier integral operator of degree zero quantizing $\phi$
(We are following the definition of order of an FIO of \cite[\S 8.2 ]{GS}.). Then by definition, there exists an amplitude $b$ such that
\begin{equation}\label{}
F(\Upsilon) (y) = \hbar^{r-(N+n+N')/2}\int e^{\sqrt{-1}\hinv (g(x,y,s)+f(x, t, u))}\ b(x,y,s,\h)\, a(x, t, u/\sqrt \hbar, \hbar)\, dx ds dt du,
\end{equation}
where $N'$ is the number of $s$-parameters.
According to Proposition \ref{nondegphaseforcomposition}, the new phase function $g(x,y,s)+f(x, t, u)$ is a non-degenerate phase function (with parameters $x, s, t, u$) parametrizing $(\Sigma_0, \Lambda_0)$.  According to Lemma \ref{specialUpsilon}, $F(\Upsilon)$ is an element of $I^r(\mathbb R^n, \Sigma_0)$.  It follows that $\Upsilon \in I^r(X, \Sigma)$.

\subsection{Applying an FIO associated to a canonical transformation to the model case}

Finally we prove the second part of Theorem \ref{IntegralRep}. As before we split $\mathbb R^n=\mathbb R^k \times \mathbb R^l$, let $\calU,\,\calV\subset \bbR^{k+l}$ be open sets,
and $F: C_0^\infty(\calV)\to C^\infty(\mathcal U\times (0,\h_0))$
a semi-classical FIO of degree zero 
associated to a canonical transformation  $\Phi:\mathcal U\times\bbR^n\to \calV\times\bbR^n$. Let $f(x,y,s)\in C^\infty(\mathcal U\times\calV\times\mathcal S)$, $\mathcal S \subset\bbR^N$,
be a non-degenerate phase function parametrizing the graph of the canonical
transformation $\Phi$.
Let $\Upsilon$ be a function in our model space, (\ref{basicdef}). The result of applying $F$ to a model isotropic state is an integral of the form
\begin{equation}\label{fupsilon}
F(\Upsilon) (x) = \hbar^{-(n+N)/2}\int e^{\sqrt{-1}\hinv f(x,y',y'',s)}\ b(x,y',y'',s,\h)\, \varphi(y', y''/\sqrt{\h})\, dy'\, dy''\, ds.
\end{equation}

 We will show that $F(\Upsilon)$ is locally of the form (\ref{localExpression}).

\begin{lemma}
Let $f(x,y,s)\in C^\infty(\mathcal U\times\calV\times\mathcal S)$, $\mathcal S \subset\bbR^N$,
be a non-degenerate phase function parametrizing the graph of a canonical
transformation $\Phi:\mathcal U\times\bbR^n\to \calV\times\bbR^n$.
Then $f$, considered as a phase function with fiber variables $(y,s)$, is
a non-degenerate phase function parametrizing the image of the zero
section $\mathcal U\times\{0\}$ under the canonical transformation.
Moreover, if we split $y=(y',y'')$, then $f(x,y',y'',s)$ (where the fiber variables
are $(y',y'',s)$) parametrizes the isotropic $\Phi(\Sigma_0)$.
\end{lemma}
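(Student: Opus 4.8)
The statement has two parts. The first is a special case of the second (take the empty splitting, $y = y'$, $y'' $ absent), so I will focus on the second assertion and extract the first along the way. The plan is to verify directly that $f(x,y',y'',s)$, viewed as a phase function in the fiber variables $(y',y'',s)$, satisfies conditions (a), (b), (c) in the definition of a non-degenerate phase function parametrizing the framed isotropic $(\Phi(\Sigma_0), \Phi(\Lambda_0))$, where $\Lambda_0 = \mathcal U \times \{0\}$ is the zero section of $T^*\mathcal U$ and $\Sigma_0 = \{(x,\xi) : x'' = 0,\ \xi = 0\}$ is the model isotropic.

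First, for condition (a): $f$ is given to be a non-degenerate phase function parametrizing the graph of $\Phi$, which is a Lagrangian $\Gamma^\# \subset T^*(\mathcal U \times \calV)$. The graph of $\Phi$, composed with the zero section $\mathcal U \times \{0\}$ (equivalently, sliced by $\{\eta = 0\}$ in the $T^*\calV$ factor), gives precisely $\Phi(\Lambda_0)$; since $\Phi$ is a symplectomorphism this composition is automatically transverse (it is the graph of a diffeomorphism restricted to a submanifold). The standard fact — I would cite \S 4.3 and Theorem 5.6.1 of \cite{GS}, exactly as in the proof of Proposition \ref{nondegphaseforcomposition} — is that under a clean/transverse composition, adjoining the $\calV$-variables $y$ (plus the original $s$) to the fiber variables yields a non-degenerate phase function for the composed Lagrangian. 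So $f$ with fiber variables $(y,s) = (y',y'',s)$ non-degenerately parametrizes $\Phi(\Lambda_0)$. This already gives the first sentence of the lemma, and condition (a) for the second.

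Next, conditions (b) and (c): I must show that the critical set $C_f = \{(x,y,s) : d_y f = 0,\ d_s f = 0\}$ intersects $\{y'' = 0\}$ transversally and maps, under $(x,y,s) \mapsto (x, (d_x f)_{(x,y,s)})$, onto $\Phi(\Sigma_0)$. Here is the key geometric point. On $C_g$ (critical set of $g = f$ as a graph-phase), relation \eqref{eqnrelgGamma} tells us $\Phi(y, -(d_y f)) = (x, (d_x f))$ for points of $C_f$, and when we view $f$ as a phase for $\Phi(\Lambda_0)$ the condition $d_y f = 0$ forces the point $(y, -(d_y f)) = (y, 0)$ to lie in the zero section $\Lambda_0$. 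So the map $C_f \to \Lambda_0 = \mathcal U \times \{0\}$, $(x,y,s)\mapsto (y,0)$, together with $\Phi$, realizes the diffeomorphism $C_f \cong \Lambda_0$; under this identification the locus $\{y'' = 0\}$ in $C_f$ corresponds exactly to $\{y'' = 0\} \cap \Lambda_0 = \Sigma_0$, and $\Phi$ carries it to $\Phi(\Sigma_0)$. That the intersection $C_f \cap \{y'' = 0\}$ is transverse follows because the projection $C_f \to \Lambda_0$ sending $(x,y,s) \mapsto (y,0)$ is a diffeomorphism, so cutting by the coordinate slice $\{y'' = 0\}$ downstairs — which is a transverse slice of the zero section — pulls back to a transverse slice of $C_f$. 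I would make this precise with a small commutative diagram analogous to \eqref{CFCfdiamgram}–\eqref{CFCfSigmadiamgram}, with $C_f$ on top, $\Lambda_0$ (resp. $\Sigma_0$) on the right, and the projection $(x,y,s)\mapsto (y,0)$ as the vertical map.

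The main obstacle — really the only subtle point — is the transversality bookkeeping: one must be careful that "$C_f$ meets $\{y'' = 0\}$ transversally in $\mathcal U \times \calV \times \mathcal S$" is the correct transversality statement (it is: the ambient manifold for the phase $f$-with-fiber-$(y',y'',s)$ is $\mathcal U \times (\calV \times \mathcal S)$, so the slice $\{y''=0\}$ is a coordinate subspace of the total space, and condition (b) of the definition is exactly transversality there). Once the diffeomorphism $C_f \xrightarrow{\sim} \Lambda_0$, $(x,y,s)\mapsto(y,0)$, is established, transversality is immediate because the preimage of a transverse coordinate slice of the zero section under a diffeomorphism is transverse. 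Everything else is a diagram chase using \eqref{eqnrelgGamma}, exactly in the spirit of the proof of Proposition \ref{nondegphaseforcomposition}.
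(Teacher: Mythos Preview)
Your proposal is correct. The route for the non-degeneracy assertion (your condition (a)) differs from the paper's: you invoke the general composition-of-phase-functions theorem from \cite{GS}, recognizing that $f$ with fiber variables $(y,s)$ is exactly the phase obtained by composing the trivial parametrization of the zero section with the given parametrization of the graph of $\Phi$, and since that composition is transverse the cited result does the work. The paper instead argues directly from first principles: from the fact that the graph-critical-set maps diffeomorphically onto the graph of a diffeomorphism, it extracts that $(x,y,s)\mapsto d_yf$ restricted to that set is a submersion, and then checks by hand that the stacked Jacobian of $(x,y,s)\mapsto (d_yf,d_sf)$ is surjective on the relevant locus. Your approach is more conceptual and shorter; the paper's is self-contained and makes the rank structure explicit. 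For conditions (b) and (c) your argument coincides with the paper's (which is only sketched there): both reduce to the observation that your $C_f\to \Lambda_0$, $(x,y,s)\mapsto (y,0)$, is a diffeomorphism---equivalently, that $(x,y',y'',s)\mapsto y''$ restricted to $C_f$ is a submersion---so the coordinate slice $\{y''=0\}$ meets $C_f$ transversally and cuts out precisely the preimage of $\Sigma_0$, which is then carried to $\Phi(\Sigma_0)$.
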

\begin{proof}
The assumption on $f$ is that zero is a regular value of the map
$(x,y,s)\mapsto d_s f(x,y,s)$, and that the map
\begin{equation}\label{zreg}
C_f:=\{(x,y,s)\;;\; d_s f(x,y,s)=0\}\longrightarrow (y, -d_y f\;;\; x, d_x f)
\end{equation}
is a (local) diffeomorphism onto the graph of $\Phi$.  It follows that the map
\[
C_f \longrightarrow d_y f
\]
is a submersion.  From this we will argue that zero is a regular value of the map
\[
(x,y,s)\mapsto \left(d_y f(x,y,s)\,,\,d_s f(x,y,s)\right),
\]
which means that $f$ is a non-degenerate phase function in the fiber variables $(y,s)$.

Indeed the assumption is that the jacobian
\[
{\mathbb J} = \begin{pmatrix}
\frac{\partial^2 f}{\partial x\partial s} & \frac{\partial^2 f}{\partial y\partial s} &
\frac{\partial^2 f}{\partial s\partial s}
\end{pmatrix}
\]
is surjective at each point of $C_f$, and we also know that the jacobian
\[
{\mathbb J}_y =
\begin{pmatrix}
\frac{\partial^2 f}{\partial x\partial y} & \frac{\partial^2 f}{\partial y\partial y} &
\frac{\partial^2 f}{\partial s\partial y}
\end{pmatrix}
\]
(differential of the map
$(x,y,s)\mapsto d_yf (x,y,s)$) is surjective when restricted to the kernel of ${\mathbb J}$,
at each point of $\wt C_f:= C_f\cap \{ d_y f=0\}$.
From this it follows that the jacobian
\[
\wt{\mathbb J} = \begin{pmatrix}
 & {\mathbb J}_y & \\
 & {\mathbb J} &
\end{pmatrix}
\]
is surjective when restricted to $\wt C_f$.

It is clear from (\ref{zreg}) that
\[
\wt C_f := C_f\cap \{ d_y f=0\}\mapsto (x, d_x f)
\]
maps $\wt C_f$ locally diffeomorphically onto the image of the zero section under $\Phi$.

\smallskip
The proof of the second assertion in the lemma is proved in a similar way,
using that the map $\wt C_f\ni (x,y',y'',s)\mapsto u$ is a submersion.
\end{proof}

\bigskip
Let us go back to (\ref{fupsilon}).  By the previous lemma the phase is a non-degenerate
phase function parametrizing the image of $\Sigma_0$ under the underlying canonical transformation, and we have the correct power of $\hbar$ since the fiber variable has dimension $n+N$ now.
It remains to show that the amplitude
\[
b(x,y',y'',s,\h)\, \varphi(y', y''/\sqrt{\h})
\]
is of the appropriate kind.  This follows by Taylor expanding the amplitude $b$ in the $y''$ variables, at $y''=0$:
\[
b(x,y',y'',s,\h) \sim \sum_\alpha (y'')^\alpha b_\alpha(x,y',s,\h).
\]
It then follows that
\[
b(x,y',y'',s,\h)\, \varphi(y', y''/\sqrt{\h}) \sim \sum_\alpha  b_\alpha(x,y',s,\h)\,
\h^{|\alpha|/2}\, \ ({y''}/{\sqrt{\h}})^\alpha\, \varphi(y', y''/\sqrt{\h}),
\]
and, since the functions $(y'')^\alpha \varphi(y',y'')$ are Schwartz in $y''$, we can conclude that
(\ref{fupsilon}) is an integral of the desired form.

\subsection{Global existence and Bohr-Sommerfeld conditions}

In the previous discussion (and in fact in our previous paper \cite{GUW}), we have ignored the issue of
patching together local isotropic functions to obtain a global one.
Here we discuss how. In order to have a global isotropic function, we need
that the isotropic $\Sigma$ satisfies a Bohr-Sommerfeld condition, which is a condition on the de Rham cohomology
class of $\iota_\Sigma^*(pdx)$ where $\iota_\Sigma: \Sigma\hookrightarrow T^*X$ is the inclusion and $pdx$ the
canonical one-form of $T^*X$.

We begin by reviewing the situation in the Lagrangian case.
If $f$ is a non-degenerate phase function parametrizing  a Lagrangian submanifold $\Lambda\subset T^*X$, so is
$f+c$ for any constant $c\in\bbR$.  Adding a constant to the phase has the effect of multiplying an associated
Lagrangian state, written using the phase $f$, by $e^{-i\hinv c}$ (which does not change it as a quantum state).
However, this additional phase factor can be
important to keep track of in applications.   To remedy this ambiguity, we proceed as in \cite{GS} Chapter 12 (an equivalent
approach also appeared in \cite{PU}).

Let us begin by noticing that if $f\in C^\infty(U\times\bbR^N)$ is a non-degenerate phase function such that
$C_f\to\Lambda$ is an embedding onto a relative open set $\Gamma\subset\Lambda$, then
we can form the composition
\begin{equation}\label{above}
\psi_f: \Gamma\to C_f\hookrightarrow U\times\bbR^N\xrightarrow{f}\bbR.
\end{equation}
\begin{lemma} If $\iota_\Gamma:\Gamma\subset T^*X$ is the inclusion, then  $d\psi_f = \iota_\Gamma^*(pdx)$,
where $pdx$ is the canonical one-form of $T^*X$.
\end{lemma}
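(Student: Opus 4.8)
$d\psi_f = \iota_\Gamma^*(pdx)$, where $\psi_f:\Gamma\to\bbR$ is the composite in \eqref{above}.

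The plan is to compute $d\psi_f$ directly by pulling everything back to the critical manifold $C_f$ and using the defining property of a non-degenerate phase function. Since $C_f\xrightarrow{\Phi}\Gamma$ is a diffeomorphism onto a relative open subset of $\Lambda$, it is equivalent (and more convenient) to verify the identity after pulling back along $\Phi$: that is, writing $j:C_f\hookrightarrow U\times\bbR^N$ for the inclusion and noting $\psi_f\circ\Phi = f\circ j$, I must show
\[
d\bigl(j^* f\bigr) \;=\; \Phi^*\,\iota_\Gamma^*(pdx) \;=\; (\iota_\Gamma\circ\Phi)^*(pdx).
\]
Here $\iota_\Gamma\circ\Phi:C_f\to T^*X$ is precisely the map $(x,s)\mapsto (x,(d_xf)_{(x,s)})$, so $(\iota_\Gamma\circ\Phi)^*(pdx) = \langle (d_xf)_{(x,s)}, dx\rangle = \sum_i \tfrac{\partial f}{\partial x_i}\,dx_i$ as a one-form on $C_f$ (where $dx_i$ means the restriction of the ambient $dx_i$ to $C_f$).

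The key computation is then the following. On the ambient space $U\times\bbR^N$ we have the identity of one-forms $df = \sum_i \tfrac{\partial f}{\partial x_i}\,dx_i + \sum_a \tfrac{\partial f}{\partial s_a}\,ds_a$. Restricting (pulling back by $j$) to $C_f$, the second sum vanishes identically because $C_f = \{d_sf = 0\}$ is exactly the locus where each $\partial f/\partial s_a = 0$. Hence $j^*(df) = \sum_i \tfrac{\partial f}{\partial x_i}\,\big|_{C_f}\,dx_i\big|_{C_f}$, and since pullback commutes with $d$, $d(j^*f) = j^*(df) = \sum_i \tfrac{\partial f}{\partial x_i}\big|_{C_f}\,dx_i\big|_{C_f}$, which is exactly $(\iota_\Gamma\circ\Phi)^*(pdx)$ as identified above. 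Transporting back via the diffeomorphism $\Phi^{-1}$ gives $d\psi_f = \iota_\Gamma^*(pdx)$ on $\Gamma$.

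There is no serious obstacle here; the only point requiring a word of care is the bookkeeping between "$\partial f/\partial s_a$ vanishes on $C_f$" and "$s_a$ is not a free coordinate on $C_f$": one must not conflate the pullback of the ambient $ds_a$ (which is generally nonzero on $C_f$) with the coefficient $\partial f/\partial s_a$ (which does vanish there). Writing the argument in terms of $j^*(df)$ and using $d\circ j^* = j^*\circ d$ makes this transparent and sidesteps any need for local coordinates on $C_f$ itself. As a sanity check one may also note the consequence that $\iota_\Gamma^*(pdx)$ is exact on the chart $\Gamma$, which is the expected local statement underlying the global Bohr--Sommerfeld discussion to follow.
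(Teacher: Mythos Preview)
Your proof is correct and follows essentially the same approach as the paper: pull back to $C_f$ via the diffeomorphism $\Phi$, and observe that $j^*(df)$ equals the pull-back of $pdx$ because the fiber derivatives of $f$ vanish on $C_f$. The paper compresses this into two sentences, while you have spelled out the coordinate computation and the commutation $d\circ j^* = j^*\circ d$ explicitly; your remark distinguishing the vanishing of the coefficient $\partial f/\partial s_a$ from the (non-vanishing) restriction of $ds_a$ is a helpful clarification of the only subtle point.
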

\begin{proof}
Consider $C_f\to\Gamma\hookrightarrow T^*X$.  The pull-back of $pdx$ by this map is the same as
the pull-back of $df$ by the inclusion $C_f\subset U\times\bbR^N$, because by definition the partial
derivatives of $f$ with respect to the fiber variables vanish on $C_f$.
\end{proof}
It follows from this that given two phase functions $f,f'$ parametrizing relative open sets $\Gamma,\Gamma'\subset\Lambda$
one must have that $(\psi_f-\psi_{f'})|_{\Gamma\cap\Gamma'}$ is locally constant.  To fix these constants globally, one needs  that
$\Lambda$ satisfies a Bohr-Sommerfeld condition.  The strongest one is the following:
\begin{definition}
A Lagrangian submanifold $\Lambda\subset T^*X$ is called exact (or strongly admissible, in the terminology of
\cite{PU}) iff there exists $\psi\in C^\infty(X)$ such that $d\psi = \iota_\Lambda^*(pdx)$ (that is, if
the de Rham cohomology class of $\iota_\Lambda^*(pdx)$ is zero).
\end{definition}

For such manifolds, one can fix $\psi$ and use local integral expressions with phase functions $f$ such that
$\psi_f = \psi|_\Gamma$, using the notation (\ref{above}).  Using such phase functions one can glue locally-defined
Lagrangian states, using a microlocal partition of unity.

The condition of being exact is too restrictive for some applications.  One can relax the condition a bit, as follows:

\begin{definition}
$\Lambda$ satisfies the Bohr-Sommerfeld condition iff the cohomology class of $\iota_\Lambda^*(pdx)$
is integral.  Equivalently, iff there exists a function $\chi = e^{2\pi i\psi}: \Lambda\to S^1$ such that
\[
\frac{1}{2\pi i} \frac{d\chi}{\chi} = \iota_\Lambda^*(pdx).
\]
\end{definition}
Cohomologically, the condition is that $\iota_\Lambda^*(pdx)$ represents an integral cohomology class of $\Lambda$.
Under this condition the expression $e^{i\hinv \psi}$ is a well-defined function on $\Lambda$ provided
$\h = 1/k$ for some $k\in\bbN$.

As we have seen, given an isotropic $\Sigma$, to parametrize it we need to frame it first, and the
previous considerations apply to any framing of $\Sigma$.  Since it is enough to consider framings that lie in a small neighborhood
of $\Sigma$,we can always assume that we have framed $\Sigma$ by a Lagrangian $\Lambda$ such that $\Sigma$ is a deformation
retract of $\Lambda$.  Therefore, the Bohr-Sommerfeld condition is that  $\iota_\Sigma^*(pdx)$ represents an integral cohomology class of $\Sigma$.

\section{General FIO invariance}

In this section we will prove the general FIO invariance of the class of isotropic functions under the standard clean composition condition
(Theorem \ref{FIOinvThm}).

\subsection{Geometric considerations}
Let $\Sigma\subset T^*Y$ be an isotropic submanifold.
We suppose $F: C_0^\infty(Y) \to C^\infty(X)$ is a semi-classical FIO whose canonical relation $\Gamma: T^*Y \Rightarrow T^*X$ composes with $\Sigma$ cleanly. Recall that this means that
\begin{equation*}\label{CI}
\tag{C.I.}\Gamma \ \ \text{intersects}\ \  T^*X\times\Sigma  \ \ \text{cleanly.}
\end{equation*}
If this is the case, the {\em excess} of the intersection is the number
\begin{equation}\label{eq:excess}
	e = \dim \left[\Gamma\cap \left(T^*X\times\Sigma\right)\right] + \dim (T^*X\times T^*Y )-
	\left(\dim\Gamma + \dim(T^*X\times\Sigma)\right),
\end{equation}
which is zero precisely when the intersection is transverse.  As a corollary of the clean intersection
condition, the set
\[
\Gamma(\Sigma) := \{p \in T^*X\ |\ \exists q \in \Sigma \text{\ such that\ }(p, q) \in \Gamma\}
\]
is an immersed isotropic submanifold of $T^*X$, see \cite[Proposition 7.1]{BG}.

Since the classes of isotropic functions are invariant under the action of FIOs associated to canonical transformations,
without loss of generality we can assume that $\Sigma$ is contained in the zero section of $T^*Y$.
Moreover, since the statement is micro-local, we will be making various simplifying assumptions along the course of the proof.
In particular we can assume that  $Y = Y'\times Y''$ where $Y''\subset\bbR^l$ and $Y'\subset\bbR^{\dim Y-l}$ are open sets,
$0\in Y''$, and that
\[
\Sigma = Y'\times \{0\},
\]
where we are identifying $Y'$ with the zero section of $T^*Y'$.
We will denote the coordinates on $Y$ by
\[
y = (y', y''),\quad y'= (y'_1,\ldots, y'_{\dim Y-l}), \quad y'' = (y''_1,\dots , y''_l).
\]
If $\eta = (\eta', \eta'')$ are the dual coordinates, then $\Sigma$ is defined by the equations:
\begin{equation}\label{eqnsSigma}
\Sigma:\qquad y''=0, \quad \eta =0.
\end{equation}

\begin{lemma}\label{compatibleGenFctn}
Let $\Gamma\subset T^*X\times T^*Y$ be a Lagrangian submanifold.  Then there exists a non-degenerate
generating function $f\in C^\infty(X\times Y\times S)$ of $\Gamma$, where $S\subset\bbR^N$ is open, such that the differentials
\begin{equation}\label{differentialsL.I.}
dy''_1, \cdots ,dy''_l,\ d\left(\frac{\partial f}{\partial y''_1}\right), \ldots , d\left(\frac{\partial f}{\partial y''_l}\right)
\end{equation}
are linearly independent everywhere.
\end{lemma}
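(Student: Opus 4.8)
The plan is to take an arbitrary non-degenerate generating function of $\Gamma$ and enlarge it by $l$ auxiliary fiber variables, chosen so that the second half of the list \eqref{differentialsL.I.} picks up the (automatically independent) differentials of those new variables, while the generated Lagrangian stays the same.

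Concretely, since the assertion is microlocal, near the point of interest $\Gamma$ admits a non-degenerate generating function $h\in C^\infty(X\times Y\times S_0)$ by H\"ormander's theorem, $S_0\subset\bbR^{N_0}$ open. I introduce new fiber variables $w=(w_1,\dots ,w_l)\in\bbR^l$, put $S=S_0\times\bbR^l$, $N=N_0+l$, and set
\[
f(x,y',y'',s_0,w)\;=\;h(x,y',y'',s_0)\;+\;\tfrac12\sum_{j=1}^{l}(y''_j-w_j)^2 .
\]
To see that $f$ is still a non-degenerate generating function of the \emph{same} $\Gamma$, I apply the fiber-preserving change of variables $(x,y,s_0,w)\mapsto(x,y,s_0,w-y'')$, under which $f$ becomes $h(x,y,s_0)+\tfrac12|w|^2$; this is $h$ stabilized by a non-degenerate quadratic form in a genuinely new variable, hence generates the same Lagrangian as $h$, and the generating-function property is invariant under fiber-preserving diffeomorphisms. (If one prefers a direct check: $C_f=\{\,w=y''\,\}\cap(C_h\times\bbR^l)$, the map $(d_{s_0}f,d_wf)=(d_{s_0}h,\,w-y'')$ has $0$ as a regular value, and on $C_f$ one has $d_xf=d_xh$, $d_{y'}f=d_{y'}h$, $d_{y''}f=d_{y''}h$, since each term added to $h$, together with its $x$- and $y$-derivatives, vanishes on $C_f$.)

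It remains to verify \eqref{differentialsL.I.}. From the formula for $f$,
\[
\frac{\partial f}{\partial y''_j}=\frac{\partial h}{\partial y''_j}+(y''_j-w_j),\qquad
d\!\left(\frac{\partial f}{\partial y''_j}\right)=d\!\left(\frac{\partial h}{\partial y''_j}\right)+dy''_j-dw_j .
\]
Here $d(\partial h/\partial y''_j)$ and $dy''_j$ involve only the coordinates $(x,y,s_0)$, whereas $dw_1,\dots,dw_l$ are linearly independent and independent of any covector not involving $w$. Hence in any linear relation among $dy''_1,\dots,dy''_l$ and $d(\partial f/\partial y''_1),\dots,d(\partial f/\partial y''_l)$, reading off the $dw_j$-components forces the coefficients of the $d(\partial f/\partial y''_j)$ to vanish, and then the coefficients of the $dy''_j$ vanish too; so the $2l$ differentials are linearly independent at every point of $X\times Y\times S$. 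I expect no serious obstacle: the only points needing care are the bookkeeping that the enlargement leaves $\Gamma$ itself unchanged (not a shifted Lagrangian), which is precisely what the substitution $w\mapsto w-y''$ arranges, and the routine verification that $f$ is non-degenerate in the enlarged set of fiber variables.
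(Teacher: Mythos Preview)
Your proof is correct and essentially identical to the paper's: both start from an arbitrary non-degenerate phase function and stabilize it by adding $\tfrac12|y''\pm s''|^2$ in $l$ new fiber variables, then observe that the $ds''_j$ (your $dw_j$) appearing in $d(\partial f/\partial y''_j)$ force the $2l$ differentials to be independent. The only cosmetic difference is the sign in the quadratic term and the direction in which the fiber-preserving diffeomorphism is applied.
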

\begin{proof}
	Start with an arbitrary non-degenerate generating function $f_0\in C^\infty(X\times Y\times S_0)$.
	Let $S := S_0\times \bbR^l$, and define $f_1\in C^\infty(X\times Y\times S)$ by
	\[
	f_1(x, y', y'', s_0, s'') = f_0(x, y', y'', s_0) + \frac 12 |s''|^2
	\]
	where $s_0\in S_0$ and $s''\in\bbR^l$.  Now let $g:S\to S$ be the diffeomorphism
	\[
	g(x,y',y'',s_0, s'') = (x,y', y'', s_0, y''+s'')
	\]
	and define
	\[
	f(x,y', y'', s_0, s'') = (f_1\circ g) (x,y', y'', s_0, s'') = f_0(x,y',y'',s_0) + \frac 12 \left| y''+s''\right|^2.
	\]
	The three functions $f_0, f_1, f$ are equivalent: they are non-degenerate and parametrize $\Gamma$.
	Moreover
	\[
	d\left(\frac{\partial f}{\partial y''_i}\right) = d\left(\frac{\partial f_0}{\partial y''_i}\right)  + dy''_i+ds''_i,
	\quad 1\leq i\leq l.
	\]
	The differentials
	\[
	dy_1'', \cdots, dy_l'', d\left(\frac{\partial f}{\partial y''_1}\right), \cdots, d\left(\frac{\partial f}{\partial y''_l}\right)
	\]
	are linearly independent everywhere, because of the presence of the $ds''_i$ in the second set
		of differentials. This completes the proof.
\end{proof}

\medskip
For the rest of this section we will assume that $f$ is an {\em adapted phase function}, meaning, as in this Lemma.  We let,
as usual,
\[
\calC_f := \left\{(x,y,s)\in X\times Y\times S\;|\; \frac{\partial f}{\partial s}(x,y,s)=0\right\}
\]
and $\Phi: \calC_f\to T^*X\times T^*Y$ be the map
\[
\Phi(x,y,s) = (x,y,  \partial_xf(x,y,s), \partial_yf(x,y,s))
\]
which we assume is a diffeomorphism onto $\Gamma$.  Finally,
we assume that (\ref{CI}) holds.

\begin{lemma}
Under the previous assumptions, at each point $\kappa\in \Gamma\cap\left(T^*X\times\Sigma\right)$,
$\kappa = \Phi(x,y,s)$, the space
\begin{equation}\label{theVSpace}
d\Phi^{-1}_{(x,y,s)}T_\kappa (\Gamma\cap\left(T^*X\times\Sigma\right))\subset \bbR^{\dim X}\times\bbR^{\dim Y}\times \bbR^N
\end{equation}
is the solution set of the system of equations
\begin{equation}\label{sysEqns}
d\left(\frac{\partial f}{\partial s_i}\right) =0,\quad dy''_j = 0, \quad d\left(\frac{\partial f}{\partial y''_j}\right) =0,
\quad d\left(\frac{\partial f}{\partial y'_a}\right) =0
\end{equation}
for $1\leq i\leq N$, $1\leq j\leq l$, $1\leq a\leq \dim Y-l$.
\end{lemma}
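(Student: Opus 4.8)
The plan is to transport everything through the parametrizing diffeomorphism $\Phi^{-1}\colon\Gamma\to\calC_f$ and then use the clean intersection hypothesis to split the tangent space of the intersection as an intersection of tangent spaces. First I would observe that, since $\Phi$ restricts to a diffeomorphism of $\calC_f$ onto $\Gamma$, the differential $d\Phi^{-1}_{(x,y,s)}$ carries $T_\kappa\Gamma$ isomorphically onto $T_{(x,y,s)}\calC_f$. Because $f$ is a non-degenerate phase function, zero is a regular value of $(x,y,s)\mapsto\partial_s f$, so $\calC_f=\{\partial_s f=0\}$ is a submanifold cut out by the everywhere linearly independent differentials $d(\partial f/\partial s_i)$, $1\le i\le N$. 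Hence, inside $\bbR^{\dim X}\times\bbR^{\dim Y}\times\bbR^N$,
\[
d\Phi^{-1}_{(x,y,s)}\bigl(T_\kappa\Gamma\bigr)\;=\;T_{(x,y,s)}\calC_f\;=\;\Bigl\{v\;:\;d\bigl(\tfrac{\partial f}{\partial s_i}\bigr)(v)=0,\ 1\le i\le N\Bigr\}.
\]

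Next I would invoke \eqref{CI}: by definition of cleanness, $\Gamma\cap(T^*X\times\Sigma)$ is a submanifold whose tangent space at $\kappa$ is the honest intersection $T_\kappa\Gamma\cap T_\kappa(T^*X\times\Sigma)$. Thus it remains to compute the preimage under $d\Phi^{-1}_{(x,y,s)}$ of that intersection, i.e.\ to pull the linear equations cutting out $T_\kappa(T^*X\times\Sigma)$ in $T_\kappa(T^*X\times T^*Y)$ back to $T_{(x,y,s)}\calC_f$. By \eqref{eqnsSigma}, near $\kappa$ the submanifold $T^*X\times\Sigma$ is defined in $T^*X\times T^*Y$, where $\eta=(\eta',\eta'')$ denotes the fiber coordinate of $T^*Y$, by the equations $y''=0$, $\eta'=0$, $\eta''=0$; consequently $T_\kappa(T^*X\times\Sigma)=\ker dy''\cap\ker d\eta'\cap\ker d\eta''$.

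Then I would pull these back along $\Phi(x,y,s)=(x,y,\partial_x f,\partial_y f)$. Since $y''\circ\Phi=y''$, $\eta'\circ\Phi=\partial_{y'}f$ and $\eta''\circ\Phi=\partial_{y''}f$, a vector $v\in T_{(x,y,s)}\calC_f$ satisfies $d\Phi(v)\in T_\kappa(T^*X\times\Sigma)$ if and only if
\[
dy''_j(v)=0,\qquad d\bigl(\tfrac{\partial f}{\partial y'_a}\bigr)(v)=0,\qquad d\bigl(\tfrac{\partial f}{\partial y''_j}\bigr)(v)=0,\qquad 1\le j\le l,\ \ 1\le a\le\dim Y-l.
\]
Intersecting this with the description of $T_{(x,y,s)}\calC_f$ from the first step gives precisely the system \eqref{sysEqns}, proving the lemma.

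The one genuinely delicate point is the use of \eqref{CI} in the second step: without cleanness the set $\Gamma\cap(T^*X\times\Sigma)$ need not be smooth at all, and it is exactly this hypothesis that licenses identifying its tangent space with $T_\kappa\Gamma\cap T_\kappa(T^*X\times\Sigma)$; the rest is bookkeeping with the parametrization $\Phi$ and the non-degeneracy of $f$. I would also note that the adapted-phase property from Lemma \ref{compatibleGenFctn}, namely the linear independence of $dy''_1,\dots,dy''_l,d(\partial f/\partial y''_1),\dots,d(\partial f/\partial y''_l)$, plays no role in the present statement, but it is exactly what will let one read off the excess $e$ from this system in the sequel.
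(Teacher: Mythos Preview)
Your proposal is correct and follows essentially the same approach as the paper's proof: invoke the clean-intersection hypothesis to write $T_\kappa(\Gamma\cap(T^*X\times\Sigma))=T_\kappa\Gamma\cap T_\kappa(T^*X\times\Sigma)$, identify $d\Phi^{-1}(T_\kappa\Gamma)=T_{(x,y,s)}\calC_f$ via non-degeneracy, and then pull back the defining equations $y''=0$, $\eta=0$ of $\Sigma$ through $\Phi$. Your version is simply more explicit than the paper's, and your closing remark that the adapted-phase condition from Lemma~\ref{compatibleGenFctn} is not used here but only in extracting the excess afterwards is accurate.
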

\begin{proof}
	By the clean intersection condition
	\[
	T_\kappa (\Gamma\cap\left(T^*X\times\Sigma\right)) = T_\kappa \Gamma\cap T_\kappa \left(T^*X\times\Sigma\right).
	\]
	Thus the space (\ref{theVSpace}) consists of the vectors in $T_{(x, y, s)} \calC_f$ that are mapped into $T_\kappa \left(T^*X\times\Sigma\right)$
	by $d\Phi$.  The first set of equations (\ref{sysEqns}) carves out $T_{(x,y,s)}\calC_f$, since $f$ is non-degenerate, and the remaining
	equations correspond to the condition that the image is tangent to the intersection.
\end{proof}

Next we extract a {\em maximal} set of linearly independent differentials among those appearing in (\ref{sysEqns}).  We will
include the $2l$ differentials corresponding to the variables $y''_j$.  It will be convenient to re-label the coordinates on $Y'\times S$
as follows:

{\em  We will write $t = (t', t'')$ for a re-labeling of the variables $(y'_1,\ldots, y'_{\dim Y-l}, s_1,\ldots, s_N)$ where:
	\begin{enumerate}
\item 	$t' = (t'_1, \ldots , t'_m)$ is such that  the differentials
\[
\quad dy''_j , \quad d\left(\frac{\partial f}{\partial y''_j}\right) ,\quad d\left(\frac{\partial f}{\partial t'_b}\right)
\]
$1\leq j\leq l$, $1\leq b\leq m$ are linearly independent near $(x,y,s)$, and
\item $m$ is maximal with respect to property (1).
	\end{enumerate}
}
(Since the intersection is a manifold it is clear that such re-labeling is possible.)
The number $m$ has the following interpretation:
\begin{corollary}
The dimension of the intersection $\Gamma\cap \left(T^*X\times\Sigma\right)$ is
\begin{equation}\label{theDimInter}
\dim \Gamma\cap \left(T^*X\times\Sigma\right) = \dim X + \dim Y + N - (2l+m)
\end{equation}
and the excess is
\begin{equation}\label{theExcess}
e = N + \dim Y - (l+m).
\end{equation}
\end{corollary}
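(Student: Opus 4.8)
The plan is to read off $\dim\bigl[\Gamma\cap(T^*X\times\Sigma)\bigr]$ directly from the description of its tangent space in the previous lemma, and then to obtain the excess by substituting into the defining formula \eqref{eq:excess}. Throughout I will use the two elementary dimension counts: $\Gamma$ is Lagrangian in $T^*(X\times Y)$, so $\dim\Gamma=\dim X+\dim Y$; and $\Sigma=Y'\times\{0\}$ with $Y'\subset\bbR^{\dim Y-l}$, so $\dim\Sigma=\dim Y-l$ and hence $\dim(T^*X\times\Sigma)=2\dim X+\dim Y-l$.

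For the dimension formula \eqref{theDimInter}: since $\Phi$ is a diffeomorphism of $\calC_f$ onto $\Gamma$, at a point $\kappa=\Phi(x,y,s)$ of the intersection the space \eqref{theVSpace} has the same dimension as $T_\kappa\bigl(\Gamma\cap(T^*X\times\Sigma)\bigr)$, and by the previous lemma it is the solution set, inside $\bbR^{\dim X}\times\bbR^{\dim Y}\times\bbR^N$, of the linear system \eqref{sysEqns}. Its dimension is therefore $\dim X+\dim Y+N$ minus the rank of that system, i.e.\ minus the number of linearly independent differentials among
\[
dy''_j,\quad d\left(\frac{\partial f}{\partial y''_j}\right),\quad d\left(\frac{\partial f}{\partial y'_a}\right),\quad d\left(\frac{\partial f}{\partial s_i}\right).
\]
This is where the re-labeling $(y',s)=(t',t'')$ does its job: the differentials $d(\partial f/\partial y'_a)$ and $d(\partial f/\partial s_i)$ together are exactly the $d(\partial f/\partial t_c)$ as $t_c$ ranges over all of $t=(t',t'')$. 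By construction the $2l+m$ differentials $dy''_j,\ d(\partial f/\partial y''_j),\ d(\partial f/\partial t'_b)$ ($1\le j\le l$, $1\le b\le m$) are linearly independent, and by the maximality of $m$ no additional $d(\partial f/\partial t''_c)$ can be adjoined to them while preserving linear independence; hence every such $d(\partial f/\partial t''_c)$ lies in their span, and the full system \eqref{sysEqns} has rank exactly $2l+m$. This yields $\dim\bigl[\Gamma\cap(T^*X\times\Sigma)\bigr]=\dim X+\dim Y+N-(2l+m)$, and in particular shows that $m$ is locally constant, since by the clean intersection condition the set $\Gamma\cap(T^*X\times\Sigma)$ is a manifold of well-defined dimension. (One should note that $m$ is well-defined to begin with because Lemma \ref{compatibleGenFctn} — the adapted-phase-function hypothesis — already makes the $2l$ differentials $dy''_j,\ d(\partial f/\partial y''_j)$ linearly independent, so the family of admissible $t'$ in property (1) is nonempty.)

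The excess formula \eqref{theExcess} then drops out of \eqref{eq:excess} by pure bookkeeping: substituting $\dim(T^*X\times T^*Y)=2\dim X+2\dim Y$, $\dim\Gamma=\dim X+\dim Y$, $\dim(T^*X\times\Sigma)=2\dim X+\dim Y-l$, and the value of $\dim\bigl[\Gamma\cap(T^*X\times\Sigma)\bigr]$ just computed, one finds that all the $\dim X$ terms cancel and all but one copy of $\dim Y$ cancels, leaving $e=N+\dim Y-(l+m)$.

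The only genuinely delicate step is the rank count, namely the claim that the span of all the differentials appearing in \eqref{sysEqns} coincides with the $(2l+m)$-dimensional span of the selected ones; this is precisely what the maximality clause in the definition of $m$ delivers, since any $d(\partial f/\partial t''_c)$ lying outside that span would produce an $(m+1)$-element set still satisfying property (1), contradicting maximality. Everything else — the two dimension formulas, the substitution into \eqref{eq:excess}, and the appeal to cleanness for the constancy of $m$ — is routine.
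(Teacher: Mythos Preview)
Your proof is correct and follows the same route as the paper: the paper simply asserts that the codimension of $\Phi^{-1}\bigl(\Gamma\cap(T^*X\times\Sigma)\bigr)$ in $X\times Y\times S$ is $2l+m$ and then does the arithmetic for the excess, while you have spelled out explicitly why that codimension equals $2l+m$ (via the maximality clause in the definition of $m$), which is exactly the step the paper leaves implicit.
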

\begin{proof}
	The dimension formula follows from the fact that the codimension of $\Phi^{-1}(\Gamma\cap \left(T^*X\times\Sigma\right))$ in
	$X\times Y \times S$ is $2l+m$.  The excess formula follows from this and (\ref{eq:excess}):
	\[\aligned
	e &= \dim X + \dim Y + N - (2l+m) + 2\dim X + 2\dim Y
		-\left(\dim X + \dim Y  + 2\dim X +\dim Y - l
	\right) \\
&= N + \dim Y - (l+m).
	\endaligned\]
\end{proof}

\subsection{End of the proof}

It suffices to prove the Theorem for an FIO of order zero, which we represent by an oscillatory integral with
an adapted phase function.  Therefore
\begin{equation}\label{FUpsilonRevised}
F(\Upsilon) (x) = \hbar^{r-(\dim X+ N)/2}\int_{Y' \times Y'' \times \mathcal S} e^{\sqrt{-1}\hbar^{-1}f(x, y', y'', s)}a(x, y',\hbar^{-1/2}y'', s, \hbar) dy'dy''ds.
\end{equation}
Let $t=(t',t'')\in \calT' \times \mathcal T''$ as above.
By viewing $t''$ as parameters, we can write  the integral (\ref{FUpsilonRevised}) as
\begin{equation}\label{IteratedIntegral}
\h^{r-(\dim X+N)/2}\int_{\mathcal T''}\left(\int_{\mathcal T' \times Y''} e^{\sqrt{-1}h^{-1}f(x,t',y''; t'')}a(x,t',h^{-1/2}y'',\hbar; t'')dt'dy''\right)dt''.
\end{equation}
Note that for each fixed $t''$, the phase function
\[
\tilde f_{t''}(x, t', y'') := f(x,t',y'', t'')
\]
(with fiber variables $t',y''$)
is a non-degenerate phase function parametrizing the isotropic submanifold
\[
\Sigma_{t''}  = \left\{\left(x, \frac{\partial \tilde f}{\partial x}(x,t',0)\right)\ |\   \frac{\partial \tilde f_{t''}}{\partial t'}(x, t', 0)=0, \frac{\partial \tilde f_{t''}}{\partial y''}(x,t',0)=0\right\}
\]
with framing
\[
\Gamma_{t''} = \left\{\left(x, \frac{\partial \tilde f}{\partial x}(x,t',y'')\right)\ |\ \frac{\partial \tilde f_{t''}}{\partial t'}(x, t', y'')=0, \frac{\partial \tilde f_{t''}}{\partial z''}(x,t',z'')=0\right\}.
\]
Moreover,
\[
\left(x, \frac{\partial \tilde f_{t''}}{\partial x}(x, t', y'')\right) = \left(x, \frac{\partial f}{\partial x}(x, t', t'', y'')\right).
\]
So all these $\Sigma_{t''}$ coincide within an open set inside $\Gamma(\Sigma)$, i.e. they are the same in the sense of germs.
By Theorem \ref{IntegralRep}, for each  $t''$ the
integrand in (\ref{IteratedIntegral}) is in the class
$I^{r'}(X, \Gamma(\Sigma))$,
where
\[
r'= \frac{m+l}{2}.
\]
Taking into account the powers of $\h$ in front of the integral, we obtain that
\[
F(\Upsilon)\in I^{r+r'-(\dim X+N)/2} (X, \Gamma(\Sigma)).
\]
However,
by (\ref{theExcess})
\[
r' - N/2 = \frac 12\left( m+l-N\right) = \frac 12\left( \dim Y - e\right)
\]
and therefore
\[
F(\Upsilon)\in I^{r+(\dim Y -\dim X-e)/2} (X, \Gamma(\Sigma)),
\]
which is what we had to prove.

\section{An example of an FIO of isotropic type}

Let $X$ be a compact Riemannian manifold, $\Delta$ its Laplace-Beltrami operator,
and $P= \frac 12\h^2\Delta +V$ be a Schr\"odinger operator where $V\in C^\infty(X)$.
Denote by
\[
H: T^*X\to\bbR,\quad H(x,\xi) = \norm{\xi}^2+V(x)
\]
its principal symbol, and let
\[
P\psi_j = E_j \psi_j, \quad E_1\leq E_2\leq\cdots
\]
where $\{\psi_j\}$ is an orthonormal basis of $L^2(X)$.

Note that the $\psi_j$ and $E_j$  depend on $\h$, but we
suppress that dependence from the notation for simplicity.

\begin{theorem}\label{anHermite}
	Assume that zero is a regular value of $H$, and let
	$\Theta := H^{-1}(0)$.  Let $\varphi\in\calS(\bbR)$ be a
	Schwartz function, and define
	\[
	P_\varphi := \varphi\left(\frac{1}{\sqrt{\h}}\,P\right)
	\]
	by the spectral theorem so that
	$P_\varphi(\psi_j) = \varphi\left(\frac{E_j}{\sqrt{\h}}\right)\,\psi_j$ for all $j$.
	Then the Schwartz kernel of $P_\varphi$ is an isotropic function associated with
	the following isotropic submanifold of $T^*X\times T^*X$:
	\begin{equation}\label{diagTheta}
		\Theta\buildrel\Delta\over\times\Theta:= \{
	(x,x, \xi, -\xi)\:|\; (x,\xi)\in \Theta\}.
	\end{equation}
\end{theorem}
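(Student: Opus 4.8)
The plan is to exhibit the Schwartz kernel of $P_\varphi$ as an oscillatory integral of the form \eqref{localExpression}, by writing $P_\varphi$ as a superposition of the semiclassical Schr\"odinger propagators $e^{\sqrt{-1}tP/\h}$. By the spectral theorem and Fourier inversion,
\[
P_\varphi \;=\; \frac{1}{2\pi}\int_{\bbR}\hat\varphi(\tau)\,e^{\sqrt{-1}\tau P/\sqrt{\h}}\,d\tau \;=\;\frac{1}{2\pi\sqrt{\h}}\int_{\bbR}\hat\varphi\bigl(t/\sqrt{\h}\bigr)\,e^{\sqrt{-1}tP/\h}\,dt
\]
after the substitution $\tau=t/\sqrt{\h}$. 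The weight $\tfrac{1}{\sqrt{\h}}\hat\varphi(t/\sqrt{\h})$ is itself a model isotropic function on $\bbR$ -- a ``coherent state in the time variable'', concentrated at $t=0$ at scale $\sqrt{\h}$ -- and, since $\hat\varphi\in\calS(\bbR)$, it is $O(\h^\infty)$ together with all its $t$-derivatives on $\{|t|\ge\epsilon\}$ for every $\epsilon>0$. Hence, up to an $O(\h^\infty)$ error in $C^\infty$, one may insert a cutoff $\chi(t)$ supported in a small neighbourhood of $t=0$ (where the usual parametrix for the propagator exists) and replace $P_\varphi$ by $\tfrac{1}{2\pi\sqrt{\h}}\int\chi(t)\hat\varphi(t/\sqrt{\h})\,e^{\sqrt{-1}tP/\h}\,dt$; one may also compose with a pseudodifferential cutoff microsupported near the compact set $\Theta$, since the spectral window $|E_j|\lesssim\sqrt{\h}$ localizes the relevant eigenfunctions there.

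Next I would insert the classical small-time parametrix for the Schr\"odinger propagator: on a coordinate chart and for $|t|$ small,
\[
e^{\sqrt{-1}tP/\h}(x,y)\;=\;(2\pi\h)^{-n}\int_{\bbR^n}e^{\sqrt{-1}\h^{-1}\left(S(t,x,\xi)-y\cdot\xi\right)}\,a(t,x,\xi,\h)\,d\xi,
\]
where $S$ solves the Hamilton--Jacobi equation for $H$ with $S(0,x,\xi)=x\cdot\xi$ and $a\sim\sum_{j\ge0}a_j(t,x,\xi)\h^j$ with $a_0(0,x,\xi)=1$. Substituting gives, modulo $O(\h^\infty)$,
\[
P_\varphi(x,y)\;=\;\h^{-n/2-(n+1)/2}(2\pi)^{-n-1}\!\!\int_{\bbR^n\times\bbR}e^{\sqrt{-1}\h^{-1}f(x,y,\xi,t)}\,\widetilde a\bigl(x,y,\xi,\,t/\sqrt{\h}\,,\h\bigr)\,d\xi\,dt
\]
with phase $f(x,y,\xi,t):=S(t,x,\xi)-y\cdot\xi$ and amplitude $\widetilde a(x,y,\xi,v,\h):=\hat\varphi(v)\,a(\sqrt{\h}\,v,x,\xi,\h)$; the factor $\chi(\sqrt{\h}\,v)$ may be dropped at the cost of $O(\h^\infty)$ in Schwartz norms because $\hat\varphi$ is rapidly decreasing. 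Since $\hat\varphi\in\calS$, $\widetilde a$ is Schwartz in $v$ with an asymptotic expansion in $\sqrt{\h}$ -- obtained by Taylor-expanding $a(\sqrt{\h}\,v,\cdot)$ in $\sqrt{\h}$, with leading term $\hat\varphi(v)$ -- and $\xi$ may be taken in a bounded open set. This is exactly an integral of the shape \eqref{localExpression} over the base $X\times X$, with non-rescaled fibre variable $\xi$ (so $K=n$) and one rescaled fibre variable $t$ (so $l=1$, $N=n+1$); comparing powers of $\h$ identifies the order as $r=-n/2$.

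It remains to check that $f$ is a non-degenerate phase function parametrizing a framed isotropic $(\Theta\buildrel\Delta\over\times\Theta,\Lambda)$ in the sense of \S2, and this is the only step where the hypothesis that $0$ is a regular value of $H$ is used. The critical set $\calC_f=\{\partial_\xi f=0,\ \partial_t f=0\}$ is cut out by $\partial_\xi S(t,x,\xi)=y$ (which expresses $y$ as a function of $(t,x,\xi)$) together with $\partial_t S(t,x,\xi)=0$; since $\partial_x S(0,x,\xi)=\xi$ and $\partial_t S(0,x,\xi)=\pm H(x,\xi)$, at $t=0$ the second equation reads $H(x,\xi)=0$, and its differential $\pm dH$ is nonzero on $\Theta$, so for $|t|$ small these equations cut out $\calC_f$ transversally -- this is condition (1) for a non-degenerate phase function. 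The map $\Phi\colon\calC_f\to T^*(X\times X)$, $(x,y,\xi,t)\mapsto(x,y,\partial_x S,-\xi)$, is an embedding onto a Lagrangian $\Lambda$: a kernel vector of $d\Phi$ along $\calC_f$ would force $\nabla_x H=\nabla_\xi H=0$, which is excluded on $\Theta$, so $\Phi$ is an immersion near $\Theta$, and it is locally injective for $|t|$ small. (This $\Lambda$ is the graph of the time-$t$ Hamilton flow, cut down to the energy surface $\{H=0\}$.) Finally
\[
\calC_f\cap\{t=0\}\;\xrightarrow{\Phi}\;\{(x,x,\xi,-\xi)\;:\;(x,\xi)\in\Theta\}\;=\;\Theta\buildrel\Delta\over\times\Theta,
\]
and this intersection is transverse -- once more because $dH\neq0$ on $\Theta$. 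Thus $f$ non-degenerately parametrizes $(\Theta\buildrel\Delta\over\times\Theta,\Lambda)$, so by the first half of Theorem \ref{IntegralRep}, $P_\varphi$ is microlocally near each point of $\Theta\buildrel\Delta\over\times\Theta$ of the form \eqref{localExpression}; patching the local pieces -- there is no Bohr--Sommerfeld obstruction since $\iota^*(p\,dx)$ vanishes on $\Theta\buildrel\Delta\over\times\Theta$ -- yields $P_\varphi\in I^{-n/2}(X\times X,\;\Theta\buildrel\Delta\over\times\Theta)$.

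I expect the main obstacle to be this last step: arranging the Hamilton--Jacobi phase so that its restriction to $\{t=0\}$ is visibly a non-degenerate phase function for the \emph{framed} pair, and tracking that all the non-degeneracy and transversality conditions involved are equivalent to ``$0$ is a regular value of $H$''; the $O(\h^\infty)$ control of the two localizations (in $t$ near $0$ and in $\xi$) is routine but a little fiddly. As an alternative that avoids the explicit phase bookkeeping, one may write $P_\varphi=\calU(g)$, where $\calU\colon C_0^\infty(\bbR_t)\to C^\infty(X\times X)$ is the semiclassical Fourier integral operator with Schwartz kernel $(t;x,y)\mapsto e^{\sqrt{-1}tP/\h}(x,y)$ and $g(t,\h):=\tfrac{1}{2\pi\sqrt{\h}}\hat\varphi(t/\sqrt{\h})\in I^{-1/2}(\bbR,\{(0,0)\})$; the canonical relation of $\calU$ carries $(0,0)\in T^*\bbR$ exactly to $\Theta\buildrel\Delta\over\times\Theta$, and it composes cleanly with $\{(0,0)\}$ precisely when $0$ is a regular value of $H$, so Theorem \ref{FIOinvThm} applies and gives the same conclusion.
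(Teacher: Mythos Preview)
Your plan is correct and, in its overall architecture, matches the paper: Fourier inversion to write $P_\varphi$ as $\frac{1}{2\pi\sqrt\h}\int e^{isP/\h}\hat\varphi(s/\sqrt\h)\,ds$, a cutoff $\chi(s)$ to isolate the small-time contribution, and then an identification of the main piece as an isotropic function on $X\times X$ associated to $\Theta\buildrel\Delta\over\times\Theta$. Your ``alternative'' in the last paragraph is in fact exactly what the paper does for the main term: it treats the kernel $(s,x,y)\mapsto e^{isP/\h}(x,y)$ as a Lagrangian distribution on $\bbR\times X\times X$, multiplies by the isotropic factor $\chi(s)\hat\varphi(s/\sqrt\h)$, and then pushes forward to $X\times X$, invoking Theorem~\ref{FIOinvThm}. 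Your primary approach (insert the Hamilton--Jacobi parametrix and verify by hand that $f(x,y,\xi,t)=S(t,x,\xi)-y\cdot\xi$ parametrizes the framed pair, then quote Theorem~\ref{IntegralRep}) simply unwinds that FIO argument in this particular case; it buys you the explicit order $r=-n/2$, at the cost of the phase bookkeeping you flag.

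The one place where the paper does substantially more than you indicate is the remainder $R=\frac{1}{2\pi\sqrt\h}\int(1-\chi(s))\hat\varphi(s/\sqrt\h)e^{isP/\h}\,ds$. You say this is $O(\h^\infty)$ in $C^\infty$ because $\hat\varphi(s/\sqrt\h)$ is $O(\h^\infty)$ for $|s|\ge\epsilon$; that gives $\|R\|_{L^2\to L^2}=O(\h^\infty)$ immediately, but smoothness of the Schwartz kernel does not follow so cheaply, since the kernel of $e^{isP/\h}$ is of size $\h^{-n}$ and the $s$-integral is over an unbounded set. The paper handles this by expanding $\calK_R(x,y)=\frac{1}{2\pi\sqrt\h}\sum_j\gamma(\hinv E_j,\h)\psi_j(x)\overline{\psi_j}(y)$, proving a separate lemma that $|\gamma(\lambda,\h)|\le C\lambda^{-k}\h^N$ for all $k,N$, and then splitting the eigenvalue sum and invoking the min-max comparison with $\h^2\Delta$, the Weyl law for $\Delta$, and $L^\infty$ eigenfunction bounds to control the $C^\infty$ norms. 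So your ``routine but a little fiddly'' is accurate in spirit, but be aware that this is where the actual work in the proof lies.
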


\begin{remark}
This  result is to be contrasted with:
	\begin{enumerate}
		\item[(a)] the kernels of operators of the form $\varphi(P)$ which are $\h$-pseudodifferential	operators, and
		\item[(b)]  operators of the form
		\begin{equation}\label{tfAssumptionOnPhi}
			\varphi\left(\frac{1}{{\h}}\,P\right) \quad\text{with}\quad \hat\varphi\in C_0^\infty(\bbR),
		\end{equation}	
		which are semi-classical Fourier integral operators associated to the null-leaf relation
		of $\Theta$.
	\end{enumerate}
\end{remark}

In the proof of the theorem we will need:
\begin{lemma}\label{needed1}
	Let $\chi\in C_0^\infty(\bbR)$ be a function that is identically equal to one in a neighborhood of the origin,
	and let $\chi_c(t) = 1-\chi(t)$.  Let $\rho\in\calS(\bbR)$, and consider
	\begin{equation}\label{lasGamma}
		\gamma(\lambda,\h) = \int e^{i\lambda s} \rho(s/\sqrt{\h})\, \chi_c(s)\, ds.
	\end{equation}
	Then
	\begin{equation}\label{}
		\forall k\geq 0,\,N>0\  \exists C>0\quad\text{such that}\quad \forall \lambda>0, \ \h\in(0,1) \text{\ we have }		\left|\gamma(\lambda,\h)\right| \leq C\,\lambda^{-k}\h^N .
	\end{equation}
\end{lemma}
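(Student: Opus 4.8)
The plan is to combine a non-stationary-phase argument (repeated integration by parts in $s$, which produces the factor $\lambda^{-k}$) with the rapid decay of $\rho$ away from the origin — enforced by the cut-off $\chi_c$ — which produces the factor $\hbar^N$. First I fix $\delta>0$ with $\chi\equiv 1$ on $\{|s|<\delta\}$; then $\chi_c$ and all of its derivatives vanish on $\{|s|<\delta\}$, so for every $\hbar\in(0,1)$ the integrand in \eqref{lasGamma} is supported in $\{|s|\geq\delta\}$, while for each fixed $\hbar$ the function $s\mapsto\rho(s/\sqrt{\hbar})$ is Schwartz.

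Next, integrate by parts $k$ times using $e^{i\lambda s}=(i\lambda)^{-1}\partial_s e^{i\lambda s}$. All boundary terms vanish, since $\rho(\cdot/\sqrt{\hbar})$ and its derivatives decay rapidly while $\chi_c$ and its derivatives are bounded, giving
\[
\gamma(\lambda,\hbar)=\frac{(-1)^k}{(i\lambda)^k}\int e^{i\lambda s}\,\partial_s^k\!\bigl[\rho(s/\sqrt{\hbar})\,\chi_c(s)\bigr]\,ds .
\]
By the Leibniz rule, $\partial_s^k\bigl[\rho(s/\sqrt{\hbar})\chi_c(s)\bigr]$ is a finite linear combination of terms $\hbar^{-j/2}\rho^{(j)}(s/\sqrt{\hbar})\,\chi_c^{(k-j)}(s)$ with $0\leq j\leq k$, each supported in $\{|s|\geq\delta\}$ and with $\chi_c^{(k-j)}$ bounded.

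It then remains to estimate $\int\bigl|\partial_s^k[\rho(s/\sqrt{\hbar})\chi_c(s)]\bigr|\,ds$. Since each $\rho^{(j)}$ is Schwartz, for any $M$ one has $|\rho^{(j)}(s/\sqrt{\hbar})|\leq C_M(1+|s|/\sqrt{\hbar})^{-M}$; writing $M=M_1+2$ and using that on $\{|s|\geq\delta\}$ we have $(1+|s|/\sqrt{\hbar})^{-M_1}\leq(|s|/\sqrt{\hbar})^{-M_1}\leq\delta^{-M_1}\hbar^{M_1/2}$, together with $\int_{\mathbb R}(1+|s|/\sqrt{\hbar})^{-2}\,ds\leq\int_{\mathbb R}(1+|s|)^{-2}\,ds<\infty$ (valid since $\hbar<1$), we obtain $\int\bigl|\partial_s^k[\rho(s/\sqrt{\hbar})\chi_c(s)]\bigr|\,ds\leq C\,\hbar^{M_1/2-k/2}$. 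Choosing $M_1=k+2N$ makes this $\leq C\hbar^N$, hence $|\gamma(\lambda,\hbar)|\leq C\lambda^{-k}\hbar^N$; the case $k=0$ is the same bound obtained directly, without integrating by parts. There is no genuine obstacle here: the only thing to monitor is that each $s$-derivative landing on $\rho(s/\sqrt{\hbar})$ costs a factor $\hbar^{-1/2}$, but this is cheap, since the rapid decay of $\rho$ on $\{|s|\geq\delta\}$ supplies arbitrarily many positive powers of $\hbar$, so one simply takes $M$ large enough to absorb the $k$ lost half-powers and still retain $\hbar^N$ — and $C$ is then allowed to depend on $k$ and $N$.
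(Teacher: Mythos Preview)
Your proof is correct and follows essentially the same approach as the paper's: integrate by parts $k$ times to extract $\lambda^{-k}$, expand by Leibniz, and then exploit the Schwartz decay of $\rho^{(j)}$ on the region $|s|\geq\delta$ (the paper tacitly takes $\delta=1$) to harvest arbitrarily many powers of $\hbar$, absorbing the $\hbar^{-j/2}$ losses from the chain rule. The only cosmetic difference is that the paper substitutes $u=s/\sqrt{\hbar}$ before estimating, whereas you work directly with the bound $(1+|s|/\sqrt{\hbar})^{-M}$; both amount to the same thing.
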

\begin{proof}  The technique of proof is standard.
	Let $D_s = \frac{1}{i}\partial_s$, and write $e^{is\lambda} = \lambda^{-k}D_s^k(e^{is\lambda})$.
	Integrating by parts one gets
	\[\aligned
	\gamma(\lambda,\h) &= (-1)^k\,\lambda^{-k} \int e^{i\lambda s} D_s^k\left(\rho(s/\sqrt{\h})\, \chi_c(s)\right)\, ds \\
	 &	= i^k\,\lambda^{-k} \sum_{j=0}^k {k\choose j}\h^{-j/2}\int e^{i\lambda s} \rho^{(j)}(s/\sqrt{\h}) \chi_c^{(k-j)}(s)\, ds.
	\endaligned\]
	However, for each $j$ there exists $C>0$ such that
	\[
	\left|\int e^{i\lambda s} \rho^{(j)}(s/\sqrt{\h})\chi_c^{(k-j)}(s)\, ds\right| \leq
	C\int_{|s|\geq 1} \left|\rho^{(j)}(s/\sqrt{\h}) \right|\, ds =
	\sqrt{h}\, C \int_{|u|\geq \h^{-1/2}} \left|\rho^{(j)}(u) \right|\, du.
	\]
	Moreover, since $\rho$ is Schwartz, for any $N>0$ there is a constant $C$ such that for all $u$, $|u|\geq 1$
	implies
	$\left|\rho^{(j)}(u) \right| \leq C  |u|^{-N}$.  Therefore if $\h\in(0,1)$,
	\[
	\int_{|u|\geq \h^{-1/2}} \left|\rho^{(j)}(u) \right|\, du \leq C \int_{|u|\geq \h^{-1/2}} |u|^{-N}\, du =
	C'\h^{(N-1)/2}.
	\]
\end{proof}

\noindent {\em Proof of Theorem \ref{anHermite}.}   One has:
\[
P_\varphi = \frac{1}{2\pi}\int_\bbR e^{itP/\sqrt{h}}\, \hat\varphi(t)\, dt,
\]
where $\hat\varphi$ is the ordinary Fourier transform of $\varphi$.  Making the
substitution $t = s/\sqrt{\h}$ gives
\[
P_\varphi = \frac{1}{2\pi\sqrt{\h}}\int_\bbR e^{isP/\h}\, \hat\varphi(s/\sqrt{\h})\, ds.
\]
Let $\chi\in C_0^\infty(\mathbb R)$ be identically equal to one in a neighborhood of the origin.
Let us write
\begin{equation}\label{pPhi}
	P_\varphi = Q+R\quad\text{where}\quad Q=\frac{1}{2\pi\sqrt{\h}}\int_\bbR e^{isP/\h}\, \hat\varphi(s/\sqrt{\h})\, \chi(s)\,ds,
\end{equation}
and
\begin{equation}\label{}
	R = \frac{1}{2\pi\sqrt{\h}}\int_\bbR e^{isP/\h}\, \hat\varphi(s/\sqrt{\h})\, \chi_c(s)\,ds
	\quad\text{where}\quad \chi_c=1-\chi.
\end{equation}
The Schwartz kernel of $R$ is
\begin{equation}\label{scOfR}
	\calK_R(x,y) = \frac{1}{2\pi\sqrt{\h}}\sum_j \gamma(\hinv E_j,\h) \,\psi_j(x)\,\overline{\psi_j}(y),
\end{equation}
where $\gamma(\hinv E_j,\h)$ is given by (\ref{lasGamma}) with $\rho = \hat\varphi$.  By the
previous Lemma
\[
\forall k,\,N\ \exists C\quad\text{such that}\quad \forall j,\, \forall\h\in(0,1),\quad
\left| \gamma(\hinv E_j,\h)\right|\leq
C\,|E_j|^{-k}\,\h^N.
\]

\newcommand{\Vmin}{V_{\text{min}}}
We want to show that $R$ is a residual operator, that is, that $\calK_R$ is
$O(\h^\infty)$ with all its derivatives.
To see this, let us break (\ref{scOfR}) into two sums,
\[
\mathrm{(I)} = \frac{1}{2\pi\sqrt{\h}}\sum_{\substack{j \\ E_j >|V_{\text{min}}|}}
\gamma(\hinv E_j,\h) \,\psi_j(x)\,\overline{\psi_j}(y)
\]
and $\mathrm{(II)} = \calK_R(x,y)-\mathrm{(I)}$, where $V_{\text{min}}<0$ is the minimum of the potential.

To estimate $\mathrm{(I)}$ we use that, by the min-max principle
\begin{equation}\label{byMinMax}
\forall j,\,\forall\h,\qquad E_j \geq \h^2 \lambda_j + \Vmin,
\end{equation}
where $0<\lambda_1\leq \lambda_2\leq\cdots$ are the eigenvalues of $\Delta$ listed with multiplicities.
By the Weyl law for the Laplacian, $\exists A>0$ such that
$\forall \lambda,\ \  \#\{\ell\;|\; \lambda_\ell< \lambda\} \leq A\lambda^{n/2}$.
Let $j\in\bbN$ and let $\lambda = \lambda_j$ in this inequality to obtain that
\[
j \leq A\lambda_j^{n/2},\quad\text{or}\quad \lambda_j \geq Bj^{2/n}
\]
for some constant $B>0$.  Combining this with (\ref{byMinMax}) we conclude that
\[
\forall j,\,\h\qquad E_j \geq \h^2 Bj^{2/n} + \Vmin.
\]
Now the summation in $\mathrm{(I)}$ is over indices $j$ such that $E_j>-\Vmin$.  Adding these inequalities
we get that in all the terms in $\mathrm{(I)}$ one has
\[
\hinv E_j \geq \frac{B}{2} \h j^{2/n}.
\]
%
and therefore $\forall k,\,N,\ \exists C$ such that
\[
\left| \gamma(\hinv E_j,\h)\right|\leq C \h^N\,j^{-k}.
\]
This implies that $\mathrm{(I)}$ is 
$O(\h^\infty)$ together with all its derivatives and can be neglected.

To estimate \[
\mathrm{(II)} = \frac{1}{2\pi\sqrt{\h}}\sum_{\substack{j \\ E_j \leq |V_{\text{min}}|}}
\gamma(\hinv E_j,\h) \,\psi_j(x)\,\overline{\psi_j}(y)
\]
(whose number of summands is $O(\h^{-n})$) we use the
$L^\infty$ bounds on eigenfunctions by $O(\h^{-n})$, together with
the previous lemma with $k=0$ and $N$ arbitrary.  Again $\mathrm{(II)}$ can be neglected.

\smallskip
Now the Schwartz kernel, $\calU_P$,
of $e^{isP/\h}$ (regarded as a distribution on $\bbR\times X\times X$)
is a Lagrangian distribution associated to
\[
\Gamma := \{(s,\delta s; \phi_s(\x)'; \x)\in T^*\bbR\times T^*X\times T^*X\;;\; \delta s = -H(\x)\},
\]
where $\phi_s$ is the Hamilton flow of $H$
and the prime denotes the usual map $(y,\delta y)' =(y, -\delta y)\in T^*X$
(and we have separated points in different cotangent bundles by a semicolon).  Equation (\ref{pPhi}) shows that the kernel
of $Q$ is the push-forward of $\varphi(s/\sqrt{\h})\chi(s)\,\calU_P$ to $X\times X$.
Therefore, by Theorem \ref{FIOinvThm} the Schwartz kernel of $Q$ is an isotropic function,
and it is easy to check that the associated isotropic is (\ref{diagTheta}).

\hfill$\Box$

Taking the trace of $P_\varphi$ leads to the following result, which can also
be proved directly:
\begin{proposition}\label{prop:funnyTF}  Under the previous assumptions, for any Schwartz function $\varphi$,
\begin{equation}\label{funnyTF}
		\sum_j\varphi\left(\frac{E_j}{\sqrt{\h}}\right) \sim
		\frac{1}{(2\pi)^{n}}\,\h^{-n+\frac 12}\, |\Theta|
	\hat{\varphi}(0)+ \h^{-n+\frac 12}\sum_{j=1}^\infty \h^{j/2}c_j,
\end{equation}
as $\h\to 0$, where $|\Theta|$ is the Liouville measure of $\Theta$.
\end{proposition}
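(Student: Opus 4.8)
The plan is to prove the proposition directly, by evaluating $\sum_j \varphi(E_j/\sqrt{\h}) = \tr P_\varphi$ with the semiclassical WKB parametrix for the propagator. The first step is to recall, from the proof of Theorem~\ref{anHermite}, the splitting $P_\varphi = Q + R$ with $R$ residual (so $\tr R = O(\h^\infty)$) and
\[
Q = \frac{1}{2\pi\sqrt{\h}}\int_\bbR e^{isP/\h}\,\hat\varphi(s/\sqrt{\h})\,\chi(s)\,ds,
\]
$\chi\in C_0^\infty(\bbR)$ identically $1$ near $s=0$. Thus $\sum_j\varphi(E_j/\sqrt{\h}) = \tr Q + O(\h^\infty)$ with $\tr Q = \frac{1}{2\pi\sqrt{\h}}\int_\bbR\hat\varphi(s/\sqrt{\h})\chi(s)\,\tr(e^{isP/\h})\,ds$. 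The point of $\chi$ is that, after shrinking its support, on $\mathrm{supp}\,\chi$ one may replace $e^{isP/\h}$ by its standard parametrix; in charts and with a partition of unity,
\[
\tr(e^{isP/\h}) = (2\pi\h)^{-n}\int_{T^*X} e^{i\Psi(s,x,\xi)/\h}\,b(s,x,\xi,\h)\,\frac{\omega^n}{n!} + O(\h^\infty),
\]
where $\Psi(0,x,\xi)=0$, $\partial_s\Psi(0,x,\xi) = -H(x,\xi)$, and $b\sim\sum_k b_k\h^k$ with $b_0(0,x,\xi)=1$.

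The heart of the argument is then a \emph{rescaled} stationary-phase computation. Substituting the parametrix into $\tr Q$, setting $s=\sqrt{\h}\,\sigma$, and writing $\Psi(s,x,\xi) = -sH(x,\xi) + s^2\rho(s,x,\xi)$ with $\rho$ smooth, one obtains
\[
\tr Q = \frac{(2\pi\h)^{-n}}{2\pi}\int_\bbR\int_{T^*X} e^{-i\sigma H(x,\xi)/\sqrt{\h}}\,e^{i\sigma^2\rho(\sqrt{\h}\sigma,x,\xi)}\,b(\sqrt{\h}\sigma,x,\xi,\h)\,\hat\varphi(\sigma)\chi(\sqrt{\h}\sigma)\,\frac{\omega^n}{n!}\,d\sigma + O(\h^\infty).
\]
This is an oscillatory integral over $(\sigma,x,\xi)\in\bbR\times T^*X$ with large parameter $\h^{-1/2}$ and phase $-\sigma H(x,\xi)$, whose critical set is exactly $\{\sigma=0\}\times\Theta$; since $0$ is a regular value of $H$, this critical manifold is clean, with transverse Hessian non-degenerate of signature zero. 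As the phase is linear in $\sigma$, the cleanest route is to integrate $\sigma$ out exactly — the amplitude being Schwartz in $\sigma$ thanks to $\hat\varphi$ and the cutoff — yielding its $\sigma$-Fourier transform evaluated at $H(x,\xi)/\sqrt{\h}$, then to apply the coarea formula with respect to $H$ and rescale the energy variable $E=\sqrt{\h}\,\mathcal E$. The resulting integral over $\mathcal E$ and over the level sets $\Theta_{\sqrt{\h}\mathcal E}$ converges absolutely, and Taylor-expanding $\Theta_{\sqrt{\h}\mathcal E}$, $b$ and $\rho$ in powers of $\sqrt{\h}$ produces an asymptotic expansion in powers of $\h^{1/2}$ with overall prefactor $\h^{-n}\cdot\sqrt{\h}=\h^{-n+1/2}$, which is the form claimed.

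To pin down the leading coefficient I would note that the $\h\to0$ limit of the amplitude uses only $b_0(0,\cdot)=1$ and $\Psi|_{s=0}=0$, while $e^{i\sigma^2\rho}$ equals $1$ at $\sigma=0$ and so does not contribute; Fourier inversion at $\sigma=0$ then gives $\hat\varphi(0)$, and the density produced by the coarea formula on $\Theta$ is precisely the Liouville (Gelfand--Leray) measure, so integrating it over $\Theta$ gives $|\Theta|$. This reproduces the leading term $\frac{1}{(2\pi)^n}\h^{-n+1/2}|\Theta|\hat\varphi(0)$. (Alternatively one may invoke Theorem~\ref{anHermite} itself: the Schwartz kernel of $P_\varphi$ is isotropic for $\Theta\buildrel\Delta\over\times\Theta$, and restricting it to the diagonal — a clean restriction — and integrating yields an expansion of the stated type; but fixing the constant still requires the stationary-phase calculation above.)

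The step I expect to be the main obstacle is the bookkeeping in this clean stationary-phase/coarea reduction: justifying the interchange of the $\sigma$-, $\mathcal E$- and $\Theta_E$-integrations, keeping the $\h^{1/2}$-expansions consistent, and getting the Liouville normalization of $|\Theta|$ exactly right. By contrast, the parametrix input, the residuality of $R$, and the $O(\h^\infty)$ control of the various tails are standard and are essentially already contained in the proof of Theorem~\ref{anHermite}.
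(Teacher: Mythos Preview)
Your proposal is correct and follows essentially the same route as the paper: use the $Q+R$ splitting and residuality of $R$ from the proof of Theorem~\ref{anHermite}, insert the small-time WKB parametrix for the propagator, rescale $s=\sqrt{\h}\,\sigma$, Taylor-expand the phase at $s=0$ to isolate $-\sigma H$, and then perform a stationary-phase/coarea reduction using $H$ as a transverse coordinate near $\Theta$. The only organizational difference is that the paper first invokes Theorem~\ref{FIOinvThm} (the trace is an FIO to a point acting on the isotropic kernel of $P_\varphi$) to guarantee the existence of the full $\h^{1/2}$-expansion and then computes only the leading coefficient, whereas you extract the whole expansion directly from the stationary-phase computation and mention the FIO route as an alternative.
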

We emphasize that the assumption that $\hat{\varphi}$ be compactly supported is not needed,
in contrast to (\ref{tfAssumptionOnPhi}).
\begin{proof}
	We first note that the left-hand side is the trace of $P_\varphi$, and
	the operation of taking the trace is a Fourier integral operator from $X\times X$
	to a point.  Therefore, by Theorem \ref{FIOinvThm} the trace has an asymptotic expansion in powers of $\sqrt{\h}$.
	
To find the leading term, let us use (\ref{pPhi}) together with a known FIO approximation to the Schwartz kernel of $\calU_P$ for small time, yields
that the Schwartz kernel of $P_\varphi$ is locally  well-approximated by integrals of the form
\begin{equation}\label{s-kerPphi}
\calV(x,y,\h)=\frac{1}{(2\pi)^{n+1}\,\h^{n+\frac 12}}\int e^{i\hinv[S(s,x,p)-y\cdot p]} a(s,x,y,p,\h)\hat\varphi(s/\sqrt{\h})\,\chi(s)\, dp\,ds,
\end{equation}
where $S$ solves the Hamilton-Jacobi equation
\[
H(x,\nabla_xS(s,x,p)) + \frac{\partial S }{\partial s}(s,x,p)=0,\quad S(0,x,p)=x\cdot p
\]
and $a$ is an amplitude such that $a|_{s=0}=1$.
The result follows from a stationary phase expansion of the integrals
\[
\int\calV(x,y,\h)\,dx=\frac{1}{(2\pi)^{n+1}\,\h^{n+\frac 12}}\int e^{i\hinv[S(s,x,p)-x\cdot p]} a(s,x,x,p,\h)\hat\varphi(s/\sqrt{\h})\,\chi(s)\, dp\,ds\, dx,
\]
or equivalently,
\[
\int \calV(x,x,\h)\, dx =
\frac{1}{(2\pi)^{n+1}\,\h^{n}}\int e^{i\hinv[S(t\sqrt{\h},x,p)-x\cdot p]} a(t\sqrt{\h},x,x,p,\h)\hat\varphi(t)\,\chi(t\sqrt{\h})\, dp\,dt\,dx.
\]
In order to achieve this we need first to Taylor expand the phase
in the $s$ variable at $s=0$:
\[
S(t\sqrt{\h},x,p) = S(0,x,p)+ t\sqrt{\h}\, \frac{\partial S}{\partial s}(0,x,p) + \h t^2 T(t,x,p,\h).
\]
Then, noting that $S_s(0,x,p) = -H(x,\nabla_xS(0,x,p))$ and $\nabla_xS(0,x,p) = p$, we get
\[
\int \calV(x,x,\h)\, dx =
\frac{1}{(2\pi)^{n+1}\,\h^{n}}\int e^{-i\h^{-1/2}tH(x,p)} e^{it^2 T(t,x,p,\h)}a(t\sqrt{\h},x,x,p,\h)\hat\varphi(t)\,\chi(t\sqrt{\h})\, dp\,dt\,dx.
\]
It is clear from the $dt$ integral that we can cut-off the integrand away from $\Theta =\{H(x,p)=0\}$.
Since zero is a regular value of $H$, in a neighborhood of $\Theta$ we can use $H$ as a coordinate in phase space.
Let $(x,p)\mapsto (H,\zeta)$ be such a coordinate change, and let $dxdp = dH\wedge d\zeta$ where $d\zeta$ represents a form of degree $2n-1$
that pull-backs to Liouville measure on $\Theta$.  Then the integral above is of the form
\[
\int \calV(x,x,\h)\, dx = \frac{1}{(2\pi)^{n+1}\,\h^{n}}\int e^{-i\h^{-1/2}tH}A(t,H,\zeta,\h)\,\hat\varphi(t)\,\chi(t\sqrt{\h})\, dH\,d\zeta\,dt + O(\h^\infty).
\]
Applying the method of stationary phase in the $dt\,dH$ integral, which now has a quadratic phase, yields
\[
\int \calV(x,x,\h)\, dx \sim \hat{\varphi}(0)\frac{\h^{1/2}}{(2\pi\h)^n} \int A(0,0,\zeta,0) d\zeta .
\]
\end{proof}

From this result, a standard argument involving approximating
the characteristic function $\chi_c$
of $[-c,c]$ by functions $\varphi_\epsilon\in C_0^\infty(\bbR)$ 
yields the following:
%
%
%
\begin{corollary}\label{cor:altWeyl}
With the previous notation and assumption, $\forall c>0$,
 \begin{equation}\label{altWeyl}
 	\#\{\,j\;; |E_j|\leq c\,\h^{1/2}\,\} \sim 2c\, \frac{1}{(2\pi)^n}\, \h^{-n+\frac 12}\, |\Theta|.
 \end{equation}
\end{corollary}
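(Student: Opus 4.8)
The plan is to reduce \eqref{altWeyl} to Proposition \ref{prop:funnyTF} by a standard squeezing argument, approximating the indicator $\mathbf{1}_{[-c,c]}$ from above and below by test functions to which the proposition applies. First I would rewrite
\[
\#\{\,j\;;\;|E_j|\leq c\,\h^{1/2}\,\} = \sum_j \mathbf{1}_{[-c,c]}\!\left(\frac{E_j}{\sqrt{\h}}\right),
\]
and record that for each fixed $\h$ this is a finite sum: by the bound $E_j\geq \h^2 Bj^{2/n}+V_{\text{min}}$ established in the proof of Theorem \ref{anHermite}, only finitely many (indeed $O(\h^{-n})$) indices $j$ satisfy $|E_j|\leq c\,\h^{1/2}$. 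Hence the inequalities below may be summed term by term.

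Next, for each $\eps\in(0,c)$ I would choose $\varphi^{\pm}_\eps\in C_0^\infty(\bbR)$ with $0\leq \varphi^-_\eps\leq \mathbf{1}_{[-c,c]}\leq \varphi^+_\eps$, where $\varphi^-_\eps$ is supported in $[-c,c]$ and equals $1$ on $[-c+\eps,c-\eps]$, while $\varphi^+_\eps$ equals $1$ on $[-c,c]$ and is supported in $[-c-\eps,c+\eps]$; then $\hat\varphi^{\pm}_\eps(0)=\int_{\bbR}\varphi^{\pm}_\eps$ satisfies $2c-2\eps\leq \hat\varphi^-_\eps(0)$ and $\hat\varphi^+_\eps(0)\leq 2c+2\eps$. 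Since $C_0^\infty(\bbR)\subset\calS(\bbR)$, Proposition \ref{prop:funnyTF} applies to $\varphi^{\pm}_\eps$ and gives, for fixed $\eps$,
\[
\h^{n-\frac12}\sum_j \varphi^{\pm}_\eps\!\left(\frac{E_j}{\sqrt{\h}}\right)\ \longrightarrow\ \frac{1}{(2\pi)^n}\,|\Theta|\,\hat\varphi^{\pm}_\eps(0)\quad\text{as }\h\to 0,
\]
the subprincipal terms in \eqref{funnyTF} being smaller than the leading one by a factor $\h^{1/2}$.

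I would then sandwich the counting function between these two sums, multiply by $\h^{n-1/2}>0$, and let $\h\to0$ first (with $\eps$ fixed) to obtain
\[
\frac{|\Theta|}{(2\pi)^n}\,\hat\varphi^-_\eps(0)\ \leq\ \liminf_{\h\to0}\ \h^{n-\frac12}\#\{\ldots\}\ \leq\ \limsup_{\h\to0}\ \h^{n-\frac12}\#\{\ldots\}\ \leq\ \frac{|\Theta|}{(2\pi)^n}\,\hat\varphi^+_\eps(0),
\]
and finally let $\eps\to0$, whereupon both outer bounds tend to $2c\,\frac{|\Theta|}{(2\pi)^n}$; this yields \eqref{altWeyl}.

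I do not expect a serious obstacle. The only delicate point is that the implied constants (the $c_j$ in \eqref{funnyTF}) depend on the chosen $\varphi^{\pm}_\eps$, so the error in Proposition \ref{prop:funnyTF} is not uniform in $\eps$; this is harmless precisely because one sends $\h\to0$ before $\eps\to0$, and for fixed $\eps$ that error is a full factor $\h^{1/2}$ below the leading term. It is worth emphasizing that the scheme works only because Proposition \ref{prop:funnyTF} holds for \emph{all} Schwartz $\varphi$ with no support condition on $\hat\varphi$: a mollification of $\mathbf{1}_{[-c,c]}$ has non-compactly-supported Fourier transform, so the restricted statement \eqref{tfAssumptionOnPhi} would not be enough.
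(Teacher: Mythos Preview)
Your proposal is correct and is precisely the ``standard argument involving approximating the characteristic function $\chi_c$ of $[-c,c]$ by functions $\varphi_\epsilon\in C_0^\infty(\bbR)$'' that the paper invokes without further detail. Your observation that the argument genuinely requires Proposition~\ref{prop:funnyTF} for general Schwartz $\varphi$ (since a compactly supported $\varphi^\pm_\eps$ has non-compactly-supported Fourier transform) is apt and worth keeping.
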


\begin{remark}
	It is straightforward to replace in the proof of Proposition \ref{prop:funnyTF}
	the exponent $1/2$ by any $\alpha\in (0,1)$, yielding the estimate
	\begin{equation}\label{}
		\#\{\,j\;; |E_j|\leq c\,\h^\alpha\,\} \sim 2c\, \frac{1}{(2\pi)^n}\, \h^{-n+\alpha}\, |\Theta|.
	\end{equation}
By the semiclassical trace formula, \cite{PU}, the result remains true for $\alpha=1$ {\em provided} one assumes
that the set of periodic trajectories has Liouville measure equal to zero
(no such assumption is needed for $\alpha\in (0,1)$).

\end{remark}

\section{Superpositions of coherent states}
By definition, a coherent state is an isotropic function where the isotropic is a single point.
In this section we show that {\em every} isotropic function is a ``superposition of coherent states",
in a sense to be made precise.  We also prove that, conversely, certain superpositions of coherent
states yield isotropic functions.

\subsection{Every isotropic function is a superposition of coherent states}

As mentioned, the elements of $I^k(X, \Sigma)$ are \emph{coherent states} if the isotropic submanifold, $\Sigma$, of $T^*X$ is just a single point $(x_0, \xi_0)$, which will be referred to as the center of the coherent state.
More precisely, a {coherent state} centered at $(x_0, \xi_0) \in  T^*\mathbb R^n$ is a (semiclassical) function defined on $\mathbb R^n$ of the form
\begin{equation}\label{coherent}
\Upsilon_{x_0, \xi_0}(x) = e^{\sqrt{-1}f(x)/\hbar}a(\hbar^{-1/2}(x-x_0), \hbar),
\end{equation}
where $f$ is a smooth function with the property that
\[df(x_0)=\xi_0,\]
and $a(x,\h)$ is a Schwartz function in $x$ having an asymptotic expansion in $\hbar$ as in
\S 2.2. The classical example are the Gaussian functions
\[e^{(-|x-x_0|^2+i(x-x_0)\cdot \xi_0)/\hbar}.\]

If $\Upsilon$ is a coherent state centered at $(x_0, \xi_0)$, then its semiclassical wave front set is $\mathrm{WF}_\hbar(\Psi_{x_0, \xi_0})=\{(x_0, \xi_0)\}$. Moreover, if  $F$ is a semiclassical Fourier integral operator associated to a canonical transformation that maps $(x_0, \xi_0)$ to $(x_1, \xi_1)$, then $F(\Psi_{x_0, \xi_0})$ is a coherent state centered at $(x_1, \xi_1)$. As a consequence, one can define coherent states on manifolds as follows:
Let $X$ be a smooth manifold, and $(x_0, \xi_0) \in T^*X$ a point in the cotangent space. A coherent state $\Psi_{x_0, \xi_0}$ centered at $(x_0, \xi_0)$ is a function on $X$ so that
\begin{itemize}
	\item in some coordinate neighborhood $U$ of $X$ near $x_0$, $\Psi$ can be written as a function of the form (\ref{coherent});
	\item in some open set $V$ of any $x \ne x_0$, $\Psi$ and all its derivatives is of $O(\hbar^\infty)$, uniformly on compact sets.
\end{itemize}
It is immediate that the previously mentioned properties of coherent states in Euclidean space extend to manifolds.

We now prove:
\begin{proposition}\label{Iso=SupCoh}
	If $\Sigma \subset T^*X$ is an isotropic submanifold, every oscillatory function, $\Upsilon \in I^r(X, \Sigma)$ is
a superposition of coherent states supported at points $(x_0, \xi_0) \in \Sigma$.
	Specifically, given $\Upsilon\in I^r(X,\Sigma)$,
	there is a smooth family of coherent states $\Upsilon_\tau$, indexed by points $\tau\in\Sigma$, and
	a 1-density $d\tau$ on $\Sigma$ such that
	\begin{equation}
	\Upsilon = \int_\Sigma \Upsilon_\tau\,d\tau.
	\end{equation}
\end{proposition}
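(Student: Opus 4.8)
The plan is to use the integral representation of Theorem \ref{IntegralRep} and recognize that the $u$-integration is already a superposition of coherent states. Recall that, microlocally near any $\sigma_0\in\Sigma$, any $\Upsilon\in I^r(X,\Sigma)$ can be written as
\[
\Upsilon(x,\h) = \h^{r-\frac N2}\int_{\calT\times U} e^{\sqrt{-1}\h^{-1}f(x,t,u)} a(x,t,\h^{-1/2}u,\h)\, dt\, du,
\]
where $f$ parametrizes the framed isotropic $(\Sigma,\Lambda)$, with fiber variables split as $s=(t,u)\in\bbR^K\times\bbR^l$, and $C_f\cap\{u=0\}$ maps onto $\Sigma$ under $\Phi:(x,t,u)\mapsto(x,d_xf)$. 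First I would fix $\tau\in\Sigma\cap\Gamma$; by non-degeneracy and condition (b) of Proposition \ref{paramPhases}, the set $C_f\cap\{u=0\}$ is cut out transversally, and $\Phi$ restricts to a diffeomorphism onto $\Sigma\cap\Gamma$. So there is a smooth section $\tau\mapsto(x(\tau),t(\tau),0)\in C_f\cap\{u=0\}$. The key observation is that for the integrand, with $t$ and $u$ both present, the $t$-integral is a stationary-phase/Lagrangian-type integral that (after the $u$-integration is performed) localizes the $x$-dependence near the projection of $C_f$; so it is really the $u$-variable that carries the "transverse spreading" of the isotropic.

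The cleanest route, I think, is to bypass the $t$-integral entirely: interpret the double integral as an iterated integral, doing $dt$ first. For fixed $u$, the inner integral
\[
\Psi_u(x,\h) := \h^{r-\frac N2}\int_{\calT} e^{\sqrt{-1}\h^{-1}f(x,t,u)}\, a(x,t,\h^{-1/2}u,\h)\, dt
\]
is (for each fixed $u$, regarding $\h^{-1/2}u$ as a fixed parameter value and absorbing its $\h$-expansion) a Lagrangian-type oscillatory integral whose phase $f(\cdot,\cdot,u)$, with fiber variables $t$, parametrizes a Lagrangian $\Lambda_u\subset T^*X$ that varies with $u$; for $u=0$ this Lagrangian contains $\Sigma$, and in general it is the "$u$-translate." But this is not quite a coherent state family. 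A better organization: perform a preliminary stationary-phase reduction in $t$ first (using that $C_f\to\calV$ is a local diffeomorphism after imposing $d_t f = d_u f = 0$ only partially — actually one solves $d_t f = 0$ for $t$ as a function of $(x,u)$, which is possible since the relevant Hessian block is non-degenerate by non-degeneracy of $f$). This eliminates $t$ and leaves an integral of the form
\[
\Upsilon(x,\h) \equiv \h^{r'}\int_U e^{\sqrt{-1}\h^{-1}\tilde f(x,u)}\, \tilde a(x,\h^{-1/2}u,\h)\, du
\]
modulo $O(\h^\infty)$, where $\tilde f$ is the reduced phase and $\tilde a$ is a new amplitude, Schwartz in the $u$-slot. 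Now change variables $u\mapsto \tau$ using that $u\mapsto(x(\tau,u),\ldots)$ realizes — after restriction to the reduced critical locus — the diffeomorphism onto $\Sigma$; more precisely, I want to reparametrize the base so that the stationary point $x_0(u)$ of $\tilde f(\cdot,u)$ traces out, as $u$ varies, exactly the configuration-space projection of $\Sigma$, with $d_x\tilde f$ at that point tracing out the corresponding covectors. After this change of variables (with Jacobian absorbed into a $1$-density $d\tau$ and into the amplitude), for each fixed $\tau$ the integrand
\[
\Upsilon_\tau(x,\h) := e^{\sqrt{-1}\h^{-1}\tilde f(x,\tau)}\, \tilde a(x,\h^{-1/2}(\text{shift of }x),\h)\cdot(\text{density factor})
\]
is exactly of the form \eqref{coherent}: an oscillatory factor $e^{\sqrt{-1}\h^{-1}\tilde f(x,\tau)}$ with $d_x\tilde f$ equal to the covector of $\tau$ at the base point $x(\tau)$, times an amplitude that is Schwartz in $\h^{-1/2}(x-x(\tau))$ and has the required asymptotic expansion. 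Then $\Upsilon = \int_\Sigma \Upsilon_\tau\, d\tau$ up to an $O(\h^\infty)$ error, which can be absorbed (for instance into a single one of the $\Upsilon_\tau$, or simply neglected since an $O(\h^\infty)$ term is itself trivially a coherent-state superposition). Finally I would patch: cover $\Sigma$ by finitely many such charts $\Gamma$, take a partition of unity subordinate to them, apply the local construction to each piece, and sum — using that a smooth partition of the density $d\tau$ on $\Sigma$ turns a finite sum of local superpositions into a single global one.

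**Main obstacle.** The delicate point is the $t$-reduction: one must verify that the sub-Hessian $\partial^2 f/\partial t\,\partial t$ (or the appropriate block governing the $t$-critical equation) is non-degenerate on the relevant locus, so that $d_t f=0$ can be solved for $t=t(x,u)$ and stationary phase in $t$ applies cleanly and uniformly in $u$ — and that the resulting amplitude $\tilde a$ retains Schwartz decay in the $u$-slot with a genuine asymptotic expansion in $\h^{1/2}$. This is where the structure of Proposition \ref{paramPhases}, especially the maximality/transversality in (b), is essential: it is exactly what guarantees the $t$-variables are "Lagrangian-type" (genuinely oscillatory, non-degenerate) while the $u$-variables are "isotropic-type" (scaled by $\h^{-1/2}$, Schwartz). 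Getting the bookkeeping of $\h$-powers and the $1$-density $d\tau$ right — so that the final $\Upsilon_\tau$ has precisely the normalization of a coherent state and $d\tau$ is a bona fide smooth $1$-density on $\Sigma$ — is the other place requiring care, but it is routine once the reduction is in place. A cleaner alternative, if the direct reduction is messy, is to first apply an FIO associated to a canonical transformation carrying $(\Sigma,\Lambda)$ to the model $(\Sigma_0,\Lambda_0)$ (legitimate by FIO-invariance, Theorem \ref{FIOinvThm}, and the fact that FIOs carry coherent states to coherent states), prove the statement in the model case where $\Upsilon(x,\h)=\varphi(x',\h^{-1/2}x'',\h)$ is manifestly $\int \varphi(x', \h^{-1/2}(x''),\h)$ — here one writes $\varphi(x',v,\h)$ and for each fixed $x'_0$ the slice is a coherent state centered at $(x'_0,0,0,0)$, and integrating $dx'_0$ (against Lebesgue measure, which is the relevant $1$-density on $\Sigma_0$) recovers $\Upsilon$ — and then transport back.
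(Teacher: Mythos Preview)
Your main approach has a dimensional mismatch that cannot be repaired. In the local integral representation the fiber variables split as $s=(t,u)\in\bbR^K\times\bbR^l$, where $l=n-\dim\Sigma$ is the \emph{codimension} of $\Sigma$ in the framing $\Lambda$. After you reduce away the $t$-variables by stationary phase (setting aside whether the $t$-Hessian block is actually non-degenerate, which is not guaranteed by non-degeneracy of $f$), what remains is an integral over $u\in\bbR^l$. But $\Sigma$ has dimension $k=n-l$, so the change of variables ``$u\mapsto\tau$'' onto $\Sigma$ you propose is between spaces of different dimensions; it simply does not exist in general. The $u$-variables parametrize directions \emph{transverse} to $\Sigma$ inside $\Lambda$, not $\Sigma$ itself. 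Relatedly, for fixed $u$ the integrand $e^{i\hinv\tilde f(x,u)}\tilde a(x,\h^{-1/2}u,\h)$ is not a coherent state in $x$: the amplitude is Schwartz in $u$, not in $x$, so there is no localization of $x$ near any point.

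Your ``cleaner alternative'' points in the right direction---reduce to the model $(\Sigma_0,\Lambda_0)$ by an FIO and handle the model case directly---and this is exactly what the paper does. But your sketch of the model case is also off: for fixed $x'_0$, the ``slice'' of $\varphi(x',\h^{-1/2}x'',\h)$ is not a coherent state on $\bbR^n$ centered at $(x'_0,0;0,0)$, because the function is not concentrated near $x'=x'_0$ at all (it depends smoothly on $x'$ with no decay), and integrating such slices over $x'_0$ does not reproduce $\Upsilon$ either. The missing idea is that one must \emph{artificially introduce} localization along $\Sigma_0$. The paper does this with a Gaussian: set
\[
\Upsilon_\tau(x,\h) := \varphi(x',\h^{-1/2}x'',\h)\, e^{-\norm{x'-\tau}^2/\h},
\]
which is now genuinely a coherent state centered at $(\tau,0;0,0)\in\Sigma_0$ (Schwartz concentration in both $x'-\tau$ and $x''$, phase with the correct differential at the center). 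Since $\int_{\bbR^k} e^{-\norm{x'-\tau}^2/\h}\,d\tau = (\pi\h)^{k/2}$, one recovers
\[
\Upsilon = (\pi\h)^{-k/2}\int_{\Sigma_0}\Upsilon_\tau\,d\tau.
\]
The reduction to the model via a microlocal partition of unity and FIOs associated to canonical transformations (which preserve coherent states) then gives the general case.
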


\begin{proof} Using a microlocal partition of unit,
	to prove Proposition \ref{Iso=SupCoh} it suffices to prove the result in the case
	when $\Sigma$ is the model isotropic, $\Sigma_0$, in (\ref{basicdef}).
	Namely,
	there exists a neighborhood, $\mathcal U$, of $p$ in $T^*X$, a neighborhood, $\mathcal U_0$, of $p_0$ in $T^*\mathbb R^n$ and a symplectomorphism, $\varphi: \mathcal U \to \mathcal U_0$ mapping $\Sigma \cap \mathcal U$ onto $\Sigma_0 \cap \mathcal U_0$. Moreover, by a microlocal partition of unity we can assume without loss of generality that the $\Upsilon$ in Proposition \ref{Iso=SupCoh} is supported in such a neighborhood and is, de facto, a $\Upsilon$ in $I^r(\mathbb R^n, \Sigma_0)$.

	Let $\Upsilon(x, \hbar)=\varphi(x', \hbar^{-1/2}x'', \hbar)$ be  an isotropic state as given in (\ref{basicdef}). For each $\tau \in \mathbb R^n$,  let
	\[
	\Upsilon_{\tau}:=
	\varphi(x', \hbar^{-1/2}x'', \h)e^{-\norm{x'-\tau}^2/\hbar}.
	\]
	This is a coherent state centered at $(\tau, 0; 0,0)\in\Sigma_0$, and
	\[
	\Upsilon = \frac{1}{(\pi\h)^{n/2}}\int_{\Sigma_0} \Upsilon_\tau\,d\tau.
	\]
	
%
\end{proof}

\subsection{``Coherent" superposition of coherent states yield isotropic functions}

Our goal is to make precise and prove the statement which is the title of this section.  The Bohr-Sommerfeld condition
plays a key role to define ``coherent".

\begin{definition}
	An isotropic submanifold $\Sigma\xrightarrow{\gamma} T^*X$ satisfies the Bohr-Sommerfeld
	condition iff $\gamma^*(pdx)$, where $pdx$ is the natural one-form on $T^*X$, has
	periods in $2\pi\bbZ$.
\end{definition}

\begin{lemma}
	let $\gamma: \Sigma\to T^*X$ be an isotropic embedding. Assume that $\gamma$ is horizontal, that is,
	assume that the composition
	\[
	\Sigma\xrightarrow{\gamma} T^*X \xrightarrow{\pi} X
	\]
	is a diffeomorphism onto an embedded submanifold $M\subset X$.  Then  there exists
	a horizontal Lagrangian $\Lambda\subset T^*X$ containing $\gamma(\Sigma)$
	(a horizontal framing).
\end{lemma}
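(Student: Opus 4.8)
The plan is to recast everything in terms of one-forms and then build the framing by hand. Since $\pi\circ\gamma$ is a diffeomorphism onto $M$, I will identify $\Sigma$ with $M$ and regard $\gamma$ as a section $\sigma$ of the bundle $T^*X|_M$, i.e.\ a smooth choice of covector $\sigma(x)\in T^*_xX$ for $x\in M$. Unwinding the definitions, a horizontal Lagrangian $\Lambda$ containing $\gamma(\Sigma)$ is nothing but the graph $\{(x,\beta_x)\mid x\in U\}$ of a \emph{closed} one-form $\beta$ on a neighbourhood $U$ of $M$ in $X$ satisfying $\beta_x=\sigma(x)$ for every $x\in M$; so the whole task is to produce such a $\beta$. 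Let $\alpha$ be the genuine one-form on $M$ obtained by restricting each covector $\sigma(x)$ to $T_xM$ (in suggestive notation $\alpha=\iota_M^*\sigma$). Pulling back the tautological form one finds $\gamma^*(pdx)=\alpha$, hence $\gamma^*$ of the symplectic form $d(pdx)$ equals $d\alpha$; thus the hypothesis that $\gamma(\Sigma)$ be isotropic says \emph{exactly} that $\alpha$ is closed, and this is the only place the isotropy assumption enters.

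The naive attempt of taking $\beta=dh$ for a single function $h$ fails, because $\alpha$ need not be exact on $M$ (think of $M=S^1$, $\alpha=d\theta$); the de Rham class $[\alpha]\in H^1(M)$ is a genuine obstruction that must be carried off $M$. Here is how: fix a tubular neighbourhood $p\colon U\to M$ (so $U\subset X$ is open, $M\subset U$, $p|_M=\mathrm{id}$, and $\ker dp$ along $M$ is a complement to $TM$), and set $\beta_1:=p^*\alpha$. As the pullback of a closed form, $\beta_1$ is a closed one-form on $U$; moreover $\iota_M^*\beta_1=\alpha$ and $\beta_1$ annihilates the normal directions along $M$, so the difference $\eta(x):=\sigma(x)-(\beta_1)_x$, $x\in M$, annihilates $T_xM$. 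In other words $\eta$ is a smooth section of the conormal bundle $N^*M\subset T^*X|_M$, and the problem is reduced to framing a section of a conormal bundle.

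This conormal part I will soak up with an exact form. Identifying $U$ with a neighbourhood of the zero section in the normal bundle $NM$ via the tubular neighbourhood, the section $\eta$ — a field of linear functionals on the fibres of $NM$ — is the fibre-wise differential of the fibre-wise linear function $g$ on $NM$ defined by $g(v):=\langle\eta(x),v\rangle$ for $v\in N_xM$. A routine check, using the standard normalisation that $dp$ restricts to the identity on the normal bundle along $M$, gives $g|_M=0$ and $dg|_M=\eta$ as sections of $T^*X|_M$. Now set $\beta:=\beta_1+dg$: this is a closed one-form on $U$ with $\beta_x=(\beta_1)_x+\eta(x)=\sigma(x)$ for all $x\in M$, so $\Lambda:=\{(x,\beta_x)\mid x\in U\}$ is a horizontal Lagrangian containing $\gamma(\Sigma)$, and one may shrink $U$ to make it as small as desired around $M$ (this local $\Lambda$ is all that is needed for the microlocal applications in which the lemma is used).

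I do not expect a serious obstacle: the argument is routine differential geometry. The one substantive point, which the naive approach misses, is that the framing cannot in general be chosen to be an \emph{exact} Lagrangian — the class $[\alpha]$ is a true obstruction — and the role of $\beta_1=p^*\alpha$ is precisely to realise that class by a closed (typically non-exact) one-form defined near $M$ in $X$. The only thing demanding care is bookkeeping: $\beta$ must agree with $\sigma$ on \emph{all} of $T^*X|_M$, not merely $\alpha$ on $TM$, which is why the conormal correction $dg$ is needed, and one must keep the tubular-neighbourhood conventions consistent so that $\beta_1$ and $dg$ add up to exactly $\sigma$ along $M$. Everything else — smoothness, closedness of $\beta$, and the Lagrangian and horizontality properties of its graph — is immediate.
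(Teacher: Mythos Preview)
Your proof is correct and follows essentially the same strategy as the paper's: reduce to finding a closed one-form $\beta$ on a tubular neighbourhood of $M$ that restricts to the given section $\sigma$ along $M$, take the pullback $p^*\alpha$ of the induced closed form on $M$ as a first approximation (closed, correct on $TM$, vanishing on normals), and then repair the conormal discrepancy by adding the differential of a function vanishing on $M$. The only difference is in how that function is built: the paper patches local solutions together with a partition of unity subordinate to trivialisations of the normal bundle, whereas you give the tidier global formula $g(v)=\langle\eta(x),v\rangle$ via the identification of the tubular neighbourhood with $NM$ --- which is exactly what the paper's local $f_i$ would be anyway.
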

\begin{proof}
	Let $s: M\to T^*X$ be the inverse of the projection $\gamma(\Sigma)\to M$.
	To each $x\in M$, $s$ associates a covector to $X$,
	$s(x)\in T_x^*X$.  Composing with the restriction maps
	\[
	\forall x\in M\qquad T^*_xX \to T^*_xM,
	\]
	$s$ defines a one-form $\sigma$ on $M$.  One can check that the
	condition that the image of $s$ is isotropic
	is equivalent to $d\sigma = 0$.
	
	{\sc Claim:} There is a neighborhood $\calU$ of $M$ in $X$ and a {\em closed} one-form $\beta\in \Omega^1(\calU)$ such that $\forall x\in M$, $s(x) = \beta_x$.
	
	To prove the claim we let $\calU$ be a tubular neighborhood of $M$, with projection
	$\frakp: \calU\to M$.  Let $\alpha = \frakp^*\sigma$.  Then $\alpha$ is closed
	and agrees with $s$ on vectors tangent to $M$.  We now show that one can
	modify $\alpha$ by adding the differential
	of a function $f:\calU\to\bbR$ so that $\beta = \alpha+df$ has the desired
	property.  It is enough to require that
	\begin{equation}\label{theProblem}
	f|_M \equiv 0\quad\text{and}\quad \forall x\in M, \, \forall v\in \ker(d\frakp_x)\ df_x(v) = s_x(v).
	\end{equation}
	
	Let $\{V_i\}$ be a locally finite cover of $M$ such that there exist trivializations
	$\frakp^{-1}(V_i) \cong V_i\times\bbR^\nu$.  For each $i$ it is easy to construct
	$f_i: \pi^{-1}(V_i)\to \bbR$ solving (\ref{theProblem}) on $V_i$.  Let $\{\chi_i\}$ be
	a  partition of unit subordinate to $\{V_i\}$ and let
	\[
	f = \sum_i \pi^*(\chi_i)\, f_i.
	\]
	It is easy to see that $f$ has the desired property.  Now let $\Lambda$ to be the image of
	the one-form $\beta$.
\end{proof}

\begin{remark}\label{keyObs}
	If $\Sigma$ satisfies the Bohr-Sommerfeld condition so does the horizontal framing
	constructed above, since the Lagrangian retracts to the isotropic.
	The one-form $\beta$ is locally of the form
	$\beta = d\psi$
	where $\psi$ is a function that is defined modulo $2\pi\bbZ$, so that $e^{i\psi}$ is a
	well-defined function near $M$.
\end{remark}

Let $\gamma:\Sigma\to T^*X$ now be any isotropic embedding of a compact manifold.  Let us define
\begin{equation}\label{susPension}
\widetilde{\Sigma} := \{(t,0 ; \gamma(t))\;;\; t\in\Sigma\}\subset T^*(\Sigma\times X).
\end{equation}
It is easy to see that this is an isotropic submanifold of $T^*(\Sigma\times X)$, and it is
clearly horizontal. The image of the projection of $\widetilde{\Sigma}$ onto
$\Sigma\times X$ is the graph
\begin{equation}\label{graph}
M = \{(t, x(t))\;;\; t\in\Sigma,\ x(t)= \pi(\gamma(t))\}
\end{equation}
of $\pi\circ\gamma: \Sigma\to X$.
Moreover, $\Sigma$ satisfies the Bohr-Sommerfeld condition iff
$\widetilde{\Sigma}$ does.

Assume from now on that $\Sigma$ satisfies the Bohr-Sommerfeld  condition, and let us restrict
the values of $\h$ to the reciprocals of the natural numbers.  Let $\psi(t,x)$ be
a function defined on a neighborhood $\calU$ of the
graph $M$, such that the image of $d\psi$ is a
framing of $\widetilde{\Sigma}$.  By the Bohr-Sommerfeld  condition $\psi$ is defined only
modulo $2\pi\bbZ$, but
$e^{i\psi(t,x)}$ is well-defined on $\calV$.

\begin{proposition}
	Consider a family of coherent states in $L^2(X)$ along $\gamma$ of the form
	\begin{equation}\label{theExpression}
	\psi_{\gamma(t)}(x) = e^{i\hinv\psi(t,x)}\, a\left(t, \frac{x-x(t)}{\sqrt{\h}}\right),
	\end{equation}
	where $a(t,\cdot)$ satisfies the usual Schwartz estimates.
	Then the superposition of coherent states
	\begin{equation}\label{superP}
	\Upsilon := \int_\Sigma \psi_{\gamma(t)}\, dt
	\end{equation}
	is an isotropic function on $X$ associated with the image of $\gamma$.
\end{proposition}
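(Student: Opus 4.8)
\emph{Strategy.} The proposition is essentially the converse of Proposition \ref{Iso=SupCoh}. The idea is to regard the whole family $\Psi(t,x):=\psi_{\gamma(t)}(x)$ as a \emph{single} $\h$-dependent function on the product $\Sigma\times X$, to observe that $\Psi$ is an isotropic function associated with the suspension $\widetilde\Sigma\subset T^*(\Sigma\times X)$ of \eqref{susPension}, and then to write $\Upsilon=\pi_*\Psi$ for the projection $\pi\colon\Sigma\times X\to X$. The suspension $\widetilde\Sigma$ is horizontal even though $\gamma$ itself need not be, so the horizontal case of the theory applies on $\Sigma\times X$; and since $\pi_*$ is a semiclassical Fourier integral operator whose canonical relation carries $\widetilde\Sigma$ onto $\gamma(\Sigma)$, the conclusion drops out of Theorem \ref{FIOinvThm}.

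\emph{Step 1: $\Psi\in I^r(\Sigma\times X,\widetilde\Sigma)$.} Write $\gamma(t)=(x(t),\xi(t))$, so that $\widetilde\Sigma$ is horizontal with base the graph $M$ of \eqref{graph}. If $x$ stays in a compact subset of $X$ disjoint from $\{x(t)\colon t\in\Sigma\}$, then $(x-x(t))/\sqrt{\h}$ has norm bounded below by $c/\sqrt{\h}$ uniformly in $t$, so the integrand of $\Psi$ — and hence $\Psi$ — is $O(\h^\infty)$ together with all derivatives, because $a(t,\cdot)$ is Schwartz and $\Sigma$ is compact; thus $\Psi$ vanishes to infinite order off $M$. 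Near a point of $M$ we use the adapted coordinates $(t,w)$ on $\Sigma\times X$ with $w:=x-x(t)$, so that $M=\{w=0\}$ and the transverse (``$x''$'') directions number $l=n=\dim(\Sigma\times X)-\dim\widetilde\Sigma$, as the theory demands. In these coordinates
\[
\Psi=e^{\sqrt{-1}\hinv\,\widetilde\psi(t,w)}\,a\bigl(t,\,w/\sqrt{\h}\bigr),\qquad \widetilde\psi(t,w):=\psi\bigl(t,x(t)+w\bigr),
\]
which is precisely the prototypical form \eqref{Rnprototypical}: $\widetilde\psi$ is real, smooth, and (by the Bohr--Sommerfeld hypothesis, $\hinv\in\bbN$, and Remark \ref{keyObs}) well-defined after multiplication by $\hinv$ inside the exponential, while $a(t,\cdot)$ is Schwartz in $w$ with estimates locally uniform in $t$ and has the assumed asymptotic expansion in $\sqrt{\h}$. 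Finally, because the image of $d\psi$ is a Lagrangian framing of $\widetilde\Sigma$ and therefore restricts on $M$ to the very covectors $(0,\xi(t))$ that define $\widetilde\Sigma$, one checks that in the $(t,w)$ coordinates $\widetilde\Sigma$ equals the graph of $d\widetilde\psi$ over $\{w=0\}$; that is, $\widetilde\psi$ generates the horizontal framing, so $\Psi\in I^r(\Sigma\times X,\widetilde\Sigma)$, where $r$ is the order of the amplitude $a$ (namely $r=0$ in the stated generality). The local descriptions patch together, as usual, by a microlocal partition of unity.

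\emph{Step 2: push-forward and conclusion.} By construction $\Upsilon=\int_\Sigma\Psi(t,\cdot)\,dt=\pi_*\Psi$, fibre integration over the compact fibre $\Sigma$. As a map $C^\infty(\Sigma\times X)\to C^\infty(X)$, $\pi_*$ is a semiclassical FIO whose canonical relation is the fibration relation
\[
\Gamma_{\pi_*}=\bigl\{\,\bigl((x,\eta),\,(t,x;0,\eta)\bigr)\;:\;t\in\Sigma\,\bigr\}\subset T^*X\times\bigl(T^*(\Sigma\times X)\bigr)^{-};
\]
a direct tangent-space computation shows that $\Gamma_{\pi_*}$ meets $T^*X\times\widetilde\Sigma$ cleanly with excess $e=\dim\Sigma$, and that $\Gamma_{\pi_*}(\widetilde\Sigma)=\{(x(t),\xi(t))\colon t\in\Sigma\}=\gamma(\Sigma)$. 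Since $\dim(\Sigma\times X)-\dim X=\dim\Sigma=e$, the order-shift $(\dim Y-\dim X-e)/2$ of Theorem \ref{FIOinvThm} vanishes; and since $\pi_*$ differs from an FIO of order zero by the factor $\h^{\dim\Sigma/2}$ (the same normalization as the $\h^{-\dim\Sigma/2}$ appearing in the proof of Proposition \ref{Iso=SupCoh}), Theorem \ref{FIOinvThm} yields
\[
\Upsilon=\pi_*\Psi\in I^{\,r+\frac12\dim\Sigma}\bigl(X,\gamma(\Sigma)\bigr).
\]
In particular $\Upsilon$ is an isotropic function associated with the image of $\gamma$, as claimed.

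\emph{The main point.} The substance is Step 1 — recognizing the $t$-indexed family of coherent states as one isotropic function on $\Sigma\times X$ attached to the suspension $\widetilde\Sigma$, and verifying that $(t,x-x(t))$ puts $\Psi$ into the normal form \eqref{Rnprototypical} with $\psi$ in the role of the generating function of the horizontal framing; here the Bohr--Sommerfeld condition is exactly what makes $\Upsilon$ globally well-defined. After that, the cleanness of the composition with $\pi_*$ and the resulting order are routine consequences of \S\S 2--3.
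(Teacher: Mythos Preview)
Your proof is correct and follows essentially the same approach as the paper's: regard the family $\Psi(t,x)=\psi_{\gamma(t)}(x)$ as a single function on $\Sigma\times X$, recognize it as an isotropic function associated with the suspension $\widetilde\Sigma$, and then obtain $\Upsilon$ as the push-forward $p_*\Psi$, invoking the FIO-invariance theorem. The paper's proof is a terse three-sentence version of exactly this argument; your version fleshes out the two points the paper leaves implicit, namely the explicit verification via the coordinates $(t,w)=(t,x-x(t))$ that $\Psi$ has the prototypical form \eqref{Rnprototypical}, and the clean-intersection bookkeeping (excess $e=\dim\Sigma$, whence the order shift from Theorem~\ref{FIOinvThm} vanishes). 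The only remark is that your final order assertion $\Upsilon\in I^{r+\frac12\dim\Sigma}$ depends on a normalization of $\pi_*$ that you justify only by analogy with Proposition~\ref{Iso=SupCoh}; since the proposition as stated does not specify the order, this does not affect correctness, but the claim would benefit from a direct check against the paper's order conventions for FIOs.
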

\begin{proof}
	Let us consider the entire family $\Psi=\{\psi_{\gamma(t)}\}_{t\in\Sigma}$ as a
	smooth function on $\Sigma\times X$.  Then one has that
	\begin{equation}\label{}
	\Upsilon = p_*(\Psi), \quad \text{where}\ p: \Sigma\times X\to X\ \text{is the projection}.
	\end{equation}
	By inspecting the expression (\ref{theExpression}) and taking into account (\ref{graph}), we can conclude
	that
	$\Psi$ is an isotropic function associated to $\widetilde{\Sigma}$.
	By functoriality of isotropic functions with respect to the action of Fourier integral
	operators (in this case the push-forward operator, $p_*$), $\Upsilon$ is an isotropic
	function given that $\widetilde{\Sigma}$ is contained in the conormal bundle to the fibers of $\pi$.
\end{proof}

\subsection{An application}
In this final section we briefly sketch some applications of our previous results.
Throughout we let $X$ be a compact manifold and $P$ a self-adjoint $\h$-pseudodifferential operator of order zero,
for example a Schr\"odinger operator, $P =\frac 12 \h^2\Delta + V$.  Let $H:T^*X\to \bbR$ be its principal symbol,
$H(x,\xi) = \frac 12 \norm{\xi}^2_x + V(x)$.  We denote by $U(t) = e^{-it\hinv P}$
the fundamental solution to the time-dependent Schr\"odinger equation
\[
i\h\frac{\partial\ }{\partial t} U(t)\psi_0 = PU(t)(\psi_0),\quad U(0) = I.
\]
This is a semi-classical Fourier integral operator, in the strong sense that its Schwartz
kernel $U(t,x,y)$ is a Lagrangian distribution on $\bbR\times X\times X$ associated with
the canonical relation
\begin{equation}\label{}
\Gamma = \{(t,\tau;\,x,\xi;\, y,\eta)\in T^*(\bbR\times X\times X)\;;\; (x, \xi) = \phi_t(y,\eta),\ \tau = H(x,\xi) \},
\end{equation}
where $\phi_t: T^*X\to T^*X$ is the Hamilton flow of $H$.


By the general theory, if $\psi_0\in C^\infty(X)$ is a coherent state centered at $(x_0,\xi_0)\in T^*X$, then
for each $t\in\bbR$, $U(t)(\psi_0)$ is a coherent state centered at $\phi_t(x_0,\xi_0)$.  Moreover, the estimates
implied by this statement are uniform provided $t,\,(x_0,\xi_0)$ take values in a compact set.

Consider now $\Sigma\subset T^*X$ a compact isotropic (possibly Lagrangian) submanifold, and let
$\Upsilon\in I^{r}(X,\Sigma)$ be an associated isotropic function.  By Proposition \ref{Iso=SupCoh}, there exists a smooth family
$\Upsilon_\sigma$ of coherent states, indexed by points $\sigma\in \Sigma$, such that
\[
\Upsilon = \int_\Sigma \Upsilon_\sigma\, d\sigma
\]
for some density $d\sigma$ on $\Sigma$.  Then we can write
\begin{equation}\label{}
U(t)(\Upsilon) = \int_\Sigma U(t)(\Upsilon_\sigma)\, d\sigma.
\end{equation}
This expresses $U(t)(\Upsilon)$ as a superposition of coherent states.
Thus, in principle, it suffices to propagate coherent states in order to
compute the propagation of isotropic states.


\end{document}